\newif\ifaistats
\newif\ifarxiv
\newif\ifarxivtwocolumns
\newif\ifneurips
\newif\ificml
\newif\ifaistatsnot
\newif\ifarxivnot
\newif\ifarxivtwocolumnsnot
\newif\ifneuripsnot
\newif\ificmlnot
\newif\ifextension
\newif\ifextensionnot
\newif\ifhidden
\newif\ifhiddennot
\newcommand{\mytitle}{Newton Method Revisited:
Global Convergence Rates up to  $\mathcal {O}\left(k^{-3} \right)$ for Stepsize Schedules and Linesearch Procedures
}
\newcommand{\mbzuai}{\texttt{MBZUAI}}
\newcommand{\authors}{ 
    \makecell{Slavom\'ir Hanzely \\ \mbzuai\footremember{MBZUAI} {Mohammed bin Zayed University of Artificial Inteligence.}}
    \and \makecell{Farshed Abdukhakimov \\ \mbzuai\footrecall{MBZUAI}}
    \and \makecell{Martin Tak\'a\v{c} \\ \mbzuai\footrecall{MBZUAI}}
}
\newcommand{\ds}{D_s}
\newcommand{\dss}[1]{D_{#1}}
\newcommand{\br}[2]{\beta_f(#1,#2)}
\newcommand{\brmax}{V_f}
\newcommand{\alow}{\underline{\alpha}}
\newcommand{\aup}{\overline{\alpha}}
\newcommand{\ndir}{n_x}
\newcommand{\ndirk}{n_{x^k}}
\newcommand{\cupper}{\overline c_x}
\newcommand{\csup}{\overline c}
\newtheorem{theorem}{Theorem}
\newtheorem{proposition}{Proposition}
\newtheorem{lemma}{Lemma}
\newtheorem{corollary}{Corollary}
\newtheorem{assumption}{Assumption}
\newtheorem*{remark}{Remark}
\newtheorem{definition}{Definition}
\DeclareMathOperator*{\argmin}{argmin}
\newcommand{\EE}{\mathbb{E}}
\newcommand{\R}{\mathbb{R}}
\newcommand{\rdtor}{\R^d \to \R}
\newcommand{\eqdef}{\stackrel{\text{def}}{=}}
\newcommand{\cO}{{\cal O}}
\newcommand{\cOb}[1]{\cO \lr #1 \rr}
\newcolumntype{?}{!{\vrule width 1pt}}
\definecolor{mydarkgreen}{RGB}{39,130,67}
\definecolor{mydarkred}{RGB}{192,47,25}
\newcommand{\fopt}{f_*}
\newcommand{\xopt}{x^*}
\newcommand{\Lstandard}{{L_{\text{sc}}}}
\newcommand{\Lq}{M_q}
\newcommand{\Lsup}{L_{p, \nu}}
\newcommand{\Lsupnu}[1]{L_{p, #1}}
\newcommand{\Lsupp}[1]{L_{#1, \nu}}
\newcommand{\Lsuppnu}[2]{L_{#1, #2}}
\newcommand{\level}{\mathcal Q(x^0)}
\newcommand{\okd}{$\mathcal O \left( k^{-2} \right)$}
\newcommand{\algstyle}{\sf}
\newcommand{\aicn}{{\algstyle AICN}}
\newcommand{\gm}{{\algstyle GM}}
\newcommand{\grn}{{\algstyle GRN}}
\newcommand{\libsvm}{{\algstyle LIBSVM}}
\newcommand{\grls}{{\algstyle GRLS}}
\newcommand{\greedy}{{\algstyle GN}}
\newcommand{\coluniversal}{blue}
\newcommand{\expo}{\col \coluniversal {\beta}}
\newcommand{\expok}{\col \coluniversal {\expo_k}}
\newcommand{\regc}{\col \coluniversal{\sigma}}
\newcommand{\regck}{\col \coluniversal{\regc_k}}
\newcommand{\lun}{\col \coluniversal{\lambda}}
\newcommand{\lunk}{\col \coluniversal{\lun_k}}
\newcommand{\alun}{\col \coluniversal{\theta}}
\newcommand{\alunk}{\col \coluniversal{\theta_k}}
\newcommand{\alunkest}[1]{\col \coluniversal{\alun_{k,#1} }}
\newcommand{\cbound}{\col {mydarkred} \gamma}
\newcommand{\pr}{\col {red}\alpha}
\newcommand{\prk}{ \pr _ {\col {red} k}}
\newcommand{\gb}{\col {mydarkgreen}{\beta}}
\newcommand{\univ}[1]{ \mathcal H\lr #1 \rr }
\newcommand{\tn}[1]{\tnote{\color{red}(#1)}}
\newcommand{\pof}[1]{Proof of #1.}
\newcommand{\td}{\nabla^3 f}
\newcommand{\h}{\nabla^2 f}
\newcommand{\g}{\nabla f}
\newcommand{\mI}{\mathbf I}
\newcommand{\normM}[2]{{\left \| #1 \right\|}_{#2}}
\newcommand{\normsM}[2]{{\left \| #1 \right\|}_{#2}^2}
\newcommand{\normMd}[2]{{\left \| #1 \right\|}_{#2}^*}
\newcommand{\normsMd}[2]{{\left \| #1 \right\|}_{#2}^{*2}}
\newcommand{\gbox}{\colorbox{lightgray!20}}
\newcommand{\gboxeq}[1]{\text{\colorbox{lightgray!20} {$#1$}}}
\newcommand{\prodin}[2]{ \prod \limits_{#1=1}^{#2}}
\newcommand{\xdiff}{x^{k+1} -x^k}
\newcommand{\N}{\mathbb N}
\newcommand{\blue}[1]{{\color{blue} #1}}
\newcommand{\purple}[1]{{ #1}}
\newcommand{\orange}[1]{{ #1}}
\newcommand{\magenta}[1]{{ #1}}
\newcommand{\col}[2]{{\color{#1} #2}}
\newcommand{\cc}{{c_5}}
\newcommand{\lr}{\left(} 
\newcommand{\rr}{\right)}
\newcommand{\ls}{\left[} 
\newcommand{\rs}{\right]}
\newcommand{\lc}{\left\lbrace} 
\newcommand{\rc}{\right\rbrace}
\newcommand{\la}{\left\langle} 
\newcommand{\ra}{\right\rangle}
\newcommand{\taylor}[1]{\Phi_{#1}}
\newcommand{\modelun}[1]{T_{\regc, \expo}\left(#1 \right)}
\newcommand{\modelunlexpo}[3]{T_{#1, #2}\left(#3 \right)}
\newcommand{\un}{{\textbf{\algstyle UN}}}
\newcommand{\rn}{{\textbf{\algstyle RN}}}
\newcommand{\footremember}[2]{%
    \footnote{#2}
    \newcounter{#1}
    \setcounter{#1}{\value{footnote}}%
}
\newcommand{\footrecall}[1]{%
    \footnotemark[\value{#1}]%
}
    \author{\authors}
    \date{}
    \title{\mytitle}
    \author{\authors}
    \date{}
    \title{\mytitle}
    \author{\meauthor}
    \date{}
    \title{\mytitle}
    \author{\authors}
    \date{}
    \title{\mytitle}
        \newcommand{\State}{\STATE}
        \newcommand{\Comment}{\COMMENT}
        \newcommand{\For}{\FOR}
        \newcommand{\EndFor}{\ENDFOR}
        \renewcommand{\gbox}{}
        \renewcommand{\gboxeq}{}
\begin{document}
\ifaistats
        
        \twocolumn[
        \aistatstitle{\mytitle}
        \aistatsauthor{ Author 1 \And Author 2 \And  Author 3 }
        \aistatsaddress{ Institution 1 \And  Institution 2 \And Institution 3 } ]
\fi
\ifaistatsnot
	   \maketitle
\fi
\begin{abstract}
This paper investigates the global convergence of stepsized Newton methods for convex functions with Hölder continuous Hessians or third derivatives. We propose several simple stepsize schedules with fast global convergence guarantees, up to $\mathcal O\lr k^{-3} \rr$. For cases with multiple plausible smoothness parameterizations or an unknown smoothness constant, we introduce a stepsize linesearch and a backtracking procedure with provable convergence as if the optimal smoothness parameters were known in advance.
Additionally, we present strong convergence guarantees for the practically popular Newton method with exact linesearch.
\end{abstract}

\section{Introduction}

Second-order methods are fundamental to scientific computing. With its rich history that can be traced back to works \citet{Newton}, \citet{Raphson}, \citep{Simpson}, they have remained widely used up to the present day \citep{ypma1995historical, conn2000trust}. 
The main advantage of second-order methods is their independence from the conditioning of the underlying problem, enabling an extremely fast local quadratic convergence rate, where precision doubles with each iteration. Additionally, they are inherently invariant to rescaling and coordinate transformations, which greatly simplifies parameter tuning.
In contrast, the convergence of first-order methods is highly dependent on the problem's conditioning, resulting in a slower linear local convergence rate and a greater sensitivity to parameter tuning.

Despite their extremely fast local convergence, second-order methods often lack global convergence guarantees. Even the classical Newton method,
\begin{equation}
    x^{k+1} = x^k - \ls \h(x^k) \rs^{-1}\g(x^k),
\end{equation}
can diverge when initialized far from the solution \citep{jarre2016simple,mascarenhas2007divergence}.
Global convergence guarantees are typically achieved through various combinations of stepsize schedules \citep{nesterov1994interior}, line-search procedures \citep{kantorovich1948functional, nocedal1999numerical}, trust-region methods \citep{conn2000trust}, and Levenberg-Marquardt regularization \citep{levenberg1944method, marquardt1963algorithm}.

The simplest globalization strategy is to employ stepsize schedules. These schedules can be based on implicit descent conditions, which often require an additional subroutine per iteration, such as exact linesearch \citep{cauchy1847methode, shea2024greedy}, Armijo linesearch \citep{armijo1966minimization}, Wolfe condition \citep{wolfe1969convergence}, Goldstein condition \citep{nocedal1999numerical}. However, those methods often lack global convergence guarantees achieved by simple stepsize schedules. Notably, \citet{nesterov1994interior} introduced a simple stepsize schedule with global rate $\cO \lr k^{-\frac 12} \rr$. \citet{hanzely2022damped} improved upon this result by discovering duality between Newton stepsizes and Lavenberg-Marquardt regularization and proposing a stepsize with global rate \okd{} matching regularized Newton methods \citep{nesterov2006cubic, mishchenko2023regularized,doikov2024gradient}.

Despite all recent advances, current guarantees still fall short of the optimal rate for functions with Hölder continuous Hessians, $\Omega \lr k^{-\frac 72} \rr$ \citep{gasnikov2019optimal, agarwal2018lower, arjevani2019oracle}. 
It remains an open question whether the rate $\cO \lr k^{-2} \rr$ achieved by \citet{hanzely2022damped} is optimal for the Newton method or if more efficient stepsize schedules are yet to be discovered.
In the context of first-order methods, several nontrivial stepsize schedules have been shown to improve convergence of Gradient Descent. \citet{young1953richardson} introduced a stepsize schedule based on Chebyshev polynomials achieving the optimal rate for quadratic functions. \citet{polyak1987introduction} proposed a stepsize schedule optimal for non-smooth convex functions, and \citet{altschuler2023acceleration}, \citet{grimmer2024accelerated} proposed stepsize schedules with guaranteed semi-accelerated rate for general convex, Lipschitz smooth functions. This motivates us to ask the question:
\begin{quote}
\textit{Is it possible to guarantee a global convergence rate better than \okd{} for a simple stepsize schedule of the Newton method?}
\end{quote}
The answer is positive. 
We demonstrate that the stepsized Newton method can be analyzed under the assumption of Hölder continuity of third derivatives, achieving convergence guarantees resembling third-order tensor methods, up to $\mathcal O(k^{-3})$\footnote{For functions with Hölder continuous third derivatives, the achievable lower bound is $\Omega \lr k^{-5} \rr$ \citep{gasnikov2019optimal}.}. 
Analyzing the Newton method as the third-order method is a novel and unexpected approach, as the Newton method has traditionally been regarded as the most classical second-order method.


\subsection{Benefits of basic methods}
While it is possible to achieve optimal rates using acceleration techniques with a more complex structure \citep{gasnikov2019optimal}, basic methods are often preferred in practice for several reasons.

Firstly, basic methods are simple and easy to understand. They are also inherently robust, typically involving fewer hyperparameters, which minimizes the need for complex and costly hyperparameter tuning. In contrast, accelerated methods often require multiple sequences of iterates and additional hyperparameters, significantly increasing the complexity of tuning.

Moreover, basic methods can be seamlessly integrated with various techniques to enhance practical performance, such as parameter searches, data sampling strategies, momentum estimation, and gradient clipping. Combining these techniques with accelerated methods, however, introduces significant challenges. In the context of first-order methods, acceleration with parameter searches provides limited improvement over basic Gradient Descent with stepsize linesearch.

For second-order methods, the basic stepsized Newton method is particularly popular due to its affine invariance (i.e., invariance to changes in basis and data scaling), making it an efficient and convenient optimization tool.

\subsection{Notation}
For convex function $f:\R^d \to \R,$ we consider the optimization objective
\begin{equation} \label{eq:objective}
    \min_{x \in \R^d} f(x),
\end{equation}
where $f$ is twice differentiable with nondegenerate Hessians and potentially ill-conditioned. 

Our paper uses a nontrivial amount of notation; hence, we highlight definitions in gray and theorems in blue for easier reference. Denote any minimizer of the function \gbox{$\xopt \in \argmin_{x\in \R^d} f(x)$} and the optimal value \gbox{$\fopt \eqdef f(\xopt)$.}
We define norms based on a symmetric positive definite matrix $\mathbf H \in \R^{d \times d}$. For all $x,g \in \R^d,$
\begin{equation*}
\gboxeq{\|x\|_ {\mathbf H} \eqdef  \la \mathbf Hx,x\ra^{1/2}, } \quad \gboxeq{\|g\|_{\mathbf H}^{\ast}\eqdef\la g,\mathbf H^{-1}g\ra^{1/2}.}
\end{equation*} 
As a special case $\mathbf H= \mI$, we get $l_2$ norm $\|x\|_\mI = \la x,x\ra^{1/2}$. We will utilize \emph{local Hessian norm} $\mathbf H = \nabla^2 f(x)$, with shorthand notation for $h,g \in \R^d$
\begin{equation*}
\gboxeq{\|h\|_x \eqdef \la \nabla^2 f(x) h,h\ra^{1/2}, \|g\|_{x}^{\ast} \eqdef \la g,\nabla^2 f(x)^{-1}g\ra^{1/2}.}
\end{equation*}

\subsection{Stepsizes as a form of regularization}

\citet{hanzely2022damped} demonstrated that a stepsize schedule for the Newton method is equivalent to cubical regularization of the Newton method \citep{nesterov2006cubic} if the regularization is measured in the local Hessian norms. As the regularized Newton methods leverage the Taylor polynomial, we denote the second-order Taylor approximation of $f(y)$ by information at point $x$ as 
\begin{equation*}
    \gboxeq{\taylor{x}(y)\eqdef f(x)+ \la \g(x), y-x \ra + \frac 12 \normsM {y-x} x.}
\end{equation*}
In particular, \citet{hanzely2022damped} showed that
    \begin{align*}
    x^{k+1} &= T(x^k), \, T(x) = \argmin_{y \in \R^d} \left \{ 
        \taylor{x}(y)
        +\frac \regc {3} \normM {y-x} x ^{3} \right\}
    \end{align*}
    is equivalent to a Newton method with stepsize \aicn{}\footnote{We present the stepsize in a simplified but equivalent form. \citet{hanzely2022damped} expressed its stepsize as $\prk = \frac {-1+\sqrt{1+2\regc \normMd {\g(x^k)} {x^k} }}{\regc \normMd {\g(x^k)} {x^k}}$.
    } 
    \begin{align}
    x^{k+1} &= x^k - \prk [\h(x^k)]^{-1} \g(x^k), \label{eq:newton_stepsize} \\ 
    & \qquad \text{for } \prk = \frac {2}{1+\sqrt{1+2\regc \normMd {\g(x^k)} {x^k}}}. \label{eq:aicn}
    \end{align}
Note that stepsize schedule \eqref{eq:aicn} preserves much larger stepsize when initialized far from the solution, $\normMd {\g(x^0)} {x^0} \gg 1$, compared to the stepsize of Damped Newton method \citep{nesterov1994interior}, which sets stepsize  for $L_{sc}$-self-concordant functions as
\begin{equation}
    \prk = \frac 1 {1+ L_{sc}\normMd {\g(x^k)} {x^k}}.
\end{equation}
Aiming to extend this dependence beyond $\Lsuppnu 21$-Hölder continuous functions (\Cref{def:holder}), in \Cref{sec:alg} we present algorithm \rn{} that under general $\Lsup$-Hölder continuity (Def \ref{def:holder}) and $q=p+\nu \in [2,4]$ supports stepsize 
\begin{equation}
    \prk=\frac 1 {1+ (9\Lsup)^{\frac 1 {q-1}} \normM {\g(x^k)} {x^k} ^{* \frac {q-2} {q-1}}},
\end{equation}
up to a constant recovering schedules of both \aicn{} stepsize \eqref{eq:aicn} (for $\Lsuppnu 2 1$-Hölder continuous functions, $q=3$) and constant stepsizes of \citet{karimireddy2018global,RSN} (for $\Lsuppnu 2 0$-Hölder continuous functions, $q=2$).


\begin{remark}
Stepsized Newton methods often enjoy much simpler analysis compared to Newton methods regularized in $l_2$ norms, as it is possible to transition easily between gradients and model differences with an exact identity 
\begin{equation} \label{eq:transition}
    \normM {x^{k+1}-x^k}{x^k} \stackrel{\eqref{eq:newton_stepsize}}= \prk \normMd {\g(x^k)}{x^k}.
\end{equation}
\end{remark}

\subsection{Higher order of regularization}
Extending cubic regularization \citep{nesterov2006cubic}, tensor methods achieve better convergence guarantees by regularizing $p$-th order Taylor approximations by $(p+1)$-th order regularization (survey in \citet{kamzolov2023exploiting}).

For third-order tensor methods, \citet{nesterov2021implementable} showed that regularization can avoid computation of third-order derivatives, and \citet{doikov2024super} simplified regularization using technique of \citet{mishchenko2023regularized} to
    \begin{align}
        x^{k+1}&=T(x^k), \, \text{where for $ \expo, \regc \geq 0$,} \label{eq:intro_reg_doikov} \\
        T(x) 
        &= \argmin_{y\in \R^d} \left \{ 
        \taylor x (y)
        + \frac \regc {2} \normsM {y-x} 2 \normM {\g(x)} 2 ^{\expo} \right\}.
    \end{align}
Combining insights about higher-order regularization with the regularization-stepsize duality of \citet{hanzely2022damped}
, we show that the higher-order regularization in local norms
\begin{align}
    x^{k+1}&=\modelun{x^k}, \, \text{where for $ \expo, \regc \geq 0$,}\label{eq:intro_reg_un}\\
    \modelun{x} &= \argmin_{y \in \R^d} \left \{ 
    \taylor x (y)
    + \frac \regc {2+\expo} \normM {y-x} x ^{2+\expo} \right\}, \label{eq:intro_reg_un_model}
\end{align}
is equivalent to a Newton method with stepsize $\prk\in(0,1]$, where $\prk$ is the \emph{unique} positive root of the polynomial $P[\alpha] \eqdef 1- \alpha - \alpha^{1+\expo} \regc \normM {\g(x^k)} {x^k} ^{* \expo}$. Even though the polynomial $P$ lacks an explicit formula for its roots, we derive algorithm \rn{} (\Cref{alg:rn}) with a simple and exactly computed stepsize.

This method can be viewed as a third-order tensor method, as the model \eqref{eq:intro_reg_un_model} bounds the third-order term of Taylor polynomial similarly to \citep[Lemma 3]{nesterov2021implementable}.
\begin{lemma}\label{le:td_bound}
    Let function $f:\R^d \to \R$ be third-order $\Lsupp 3$-Hölder continuous (Def. \ref{def:holder}). Then $\forall x^k, x^{k+1} \in \R^d,$
    \begin{align*}
    \normMd {\nabla ^3 f(x^k) [\xdiff]^2} {x^k}
    &\\
    &\hspace{-0.8cm} \leq 2 \lr \frac {\Lsupp 3 } {1+\nu} \rr ^{\frac 1 {1+\nu}}
     \normsM {\xdiff} {x^k}.
    \end{align*}
\end{lemma}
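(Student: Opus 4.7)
The plan is to combine the third-order Hölder continuity of $f$ (from \Cref{def:holder}) with the fundamental theorem of calculus to express $\nabla^3 f(x^k)[h]^2$ in a form that is directly amenable to the Hölder bound, where $h \eqdef x^{k+1} - x^k$. The starting point is the identity
$$\nabla^3 f(x^k)[h]^2 = \bigl(\nabla^2 f(x^{k+1}) - \nabla^2 f(x^k)\bigr)[h] - \int_0^1 \bigl(\nabla^3 f(x^k + \tau h) - \nabla^3 f(x^k)\bigr)[h]^2\, d\tau,$$
which follows by rearranging $\int_0^1 \nabla^3 f(x^k + \tau h)[h]^2\, d\tau = (\nabla^2 f(x^{k+1}) - \nabla^2 f(x^k))[h]$ and adding/subtracting $\nabla^3 f(x^k)[h]^2$ under the integral.

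Next, I take $\|\cdot\|_{x^k}^{\ast}$ on both sides and use the triangle inequality. The integral term is controlled directly by the Hölder bound: the dual norm of each integrand is at most $L_{3,\nu}(\tau\|h\|_{x^k})^{\nu} \|h\|_{x^k}^2$, and evaluating $\int_0^1 \tau^{\nu}\, d\tau = 1/(1+\nu)$ produces a contribution of $\frac{L_{3,\nu}}{1+\nu}\|h\|_{x^k}^{2+\nu}$. The Hessian-difference term $(\nabla^2 f(x^{k+1}) - \nabla^2 f(x^k))[h]$ is treated via the $(1+\nu)$-order Taylor consequence of the Hölder continuity of $\nabla^3 f$ (the usual identity $\|\nabla^2 f(y) - \nabla^2 f(x) - \nabla^3 f(x)[y-x]\|_{x}^{\ast} \leq \frac{L_{3,\nu}}{1+\nu}\|y-x\|_{x}^{1+\nu}$), which yields a parallel estimate. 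Together, these two pieces account for the factor of $2$ in the stated coefficient.

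The final step is to reconcile the $\|h\|_{x^k}^{2+\nu}$ scaling on the right with the $\|h\|_{x^k}^2$ scaling claimed in the lemma; this is done via Young's inequality with conjugate exponents $1+\nu$ and $(1+\nu)/\nu$, which produces the characteristic coefficient $\bigl(L_{3,\nu}/(1+\nu)\bigr)^{1/(1+\nu)}$. The main obstacle will be executing this last step with precise bookkeeping: the two weights in the Young inequality have to be balanced so that the final constant matches exactly $2\bigl(L_{3,\nu}/(1+\nu)\bigr)^{1/(1+\nu)}$ rather than something of the same order with a different numerical prefactor. A secondary subtlety is checking that the norm appearing in \Cref{def:holder} is used in a form compatible with the local Hessian norms $\|\cdot\|_{x^k}$ and $\|\cdot\|_{x^k}^{\ast}$ on both sides of the target inequality, since the Hölder assumption and the conclusion are framed in slightly different metric scales.
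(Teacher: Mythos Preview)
Your approach has a genuine gap. The decomposition is circular: to bound the Hessian-difference piece $\|(\nabla^2 f(x^{k+1}) - \nabla^2 f(x^k))[h]\|_{x^k}^{\ast}$ via the Taylor consequence $\|\nabla^2 f(y) - \nabla^2 f(x) - \nabla^3 f(x)[y-x]\|_{op} \leq \frac{L_{3,\nu}}{1+\nu}\|y-x\|_{x}^{1+\nu}$, you would first need a bound on $\|\nabla^3 f(x^k)[h]^2\|_{x^k}^{\ast}$ itself, which is precisely the quantity the lemma asserts; substituting back yields only the tautology $\|\nabla^3 f(x^k)[h]^2\|_{x^k}^{\ast} \leq \|\nabla^3 f(x^k)[h]^2\|_{x^k}^{\ast} + \tfrac{2L_{3,\nu}}{1+\nu}\|h\|_{x^k}^{2+\nu}$. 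Moreover, the ``final step'' is not a bookkeeping difficulty but an impossibility: you cannot dominate $\|h\|_{x^k}^{2+\nu}$ by a constant times $\|h\|_{x^k}^{2}$ uniformly in $h$, since the exponents differ. In fact, H\"older continuity of $\nabla^3 f$ alone cannot bound the size of $\nabla^3 f$ at a point: for $f(t)=Mt^3$ the third derivative is constant, hence $L_{3,\nu}=0$ for every $\nu>0$, yet $\nabla^3 f = 6M$ is as large as you like. This $f$ is of course not convex, which is exactly the point.

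The missing ingredient is convexity, which your proposal never invokes. The paper's argument uses it through the inequality $0 \leq \|h\|_{y}^2 = \langle \nabla^2 f(y) h, h\rangle$ at an auxiliary point $y = x^k + \tau u$ with $\|u\|_{x^k}=1$. Expanding $\nabla^2 f(y)$ about $x^k$ and applying the H\"older bound gives $-\langle \nabla^3 f(x^k)[h]^2, u\rangle \leq \tfrac{1}{\tau}\|h\|_{x^k}^2 + \tfrac{L_{3,\nu}\tau^{\nu}}{1+\nu}\|h\|_{x^k}^2$. Taking the supremum over unit $u$ yields $\|\nabla^3 f(x^k)[h]^2\|_{x^k}^{\ast} \leq \bigl(\tfrac{1}{\tau} + \tfrac{L_{3,\nu}\tau^{\nu}}{1+\nu}\bigr)\|h\|_{x^k}^2$, and optimizing at $\tau = \bigl((1+\nu)/L_{3,\nu}\bigr)^{1/(1+\nu)}$ produces the constant $2\bigl(L_{3,\nu}/(1+\nu)\bigr)^{1/(1+\nu)}$. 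The free scale $\tau$ plays the balancing role you hoped Young's inequality would, but it is the nonnegativity $\|h\|_y^2 \geq 0$ from convexity that makes the whole argument close.
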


\subsubsection*{Generality of higher-order regularization} \label{ssec:regularization_generalized}
Investigating generality of the regularization \eqref{eq:intro_reg_un_model}, w can observe that \eqref{eq:intro_reg_un_model} also encapsulates all polynomial upper bounds of polynomials $P[\normM {x-y}x]$ with smaller exponents. 
Writing regularization as a polynomial,
\begin{align}
    f(y) 
    &\leq 
    \taylor x (y)
    + P[\normM {x-y}x],\\
    \intertext{this can be bounded as}
    f(y) &\leq 
    \taylor x (y)
    + A_1 + A_2 \normM {x-y}x^p, \label{eq:poly_simplified}
\end{align}
where constants $A_1, A_2>0$ and degree $p$ are expressed in the lemma below. Notably, the next iterate $x^+$ set as the minimizer of the right-hand side of \eqref{eq:poly_simplified} is not affected by $A_1$, but the $A_1$ worsens guarantees on functional value decrease, $f(x^+) \leq f(x)+A_1.$
 \begin{lemma}\label{le:polynomial_ag}
        A polynomial $P$ with $d_P$ coefficients $a_k\geq 0$ and exponents $0 \leq b_1 \leq \dots \leq  b_{d_P}$, 
        \begin{equation*}
            P[x] \eqdef \sum _{k=0} ^{d_P} a_k x^{b_k},
        \end{equation*}
        satisfies following bound with any $p \geq \max_{k \in \lc 1, \dots, d_P \rc}{b_k}$,
        \begin{equation*}
            P[x] \leq A_1 + A_2 x^p, 
        \end{equation*}
        where $A_1= \frac 1 p \sum _{k=0} ^{d_P} a_k (p-b_k), A_2= \frac 1 p \sum _{k=0} ^{d_P} a_k b_k$.
    \end{lemma}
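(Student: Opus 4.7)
The plan is to reduce the multi-term statement to a single-term inequality, then sum. The heart of the matter is that for any exponent $b \in [0,p]$ and any $x \geq 0$, one has the pointwise bound
\begin{equation*}
x^{b} \;\leq\; \frac{p-b}{p} + \frac{b}{p}\, x^{p}.
\end{equation*}
Once this is established, multiplying by $a_k \geq 0$ and summing over $k=0,\dots,d_R$ yields exactly $R[x] \leq A_1 + A_2 x^p$ with the stated $A_1,A_2$, so the proof reduces to a single elementary inequality.

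For the pointwise bound, I would set $\theta := b/p \in [0,1]$ and $y := x^p \geq 0$, so that $x^b = y^\theta$. The claim then becomes $y^\theta \leq (1-\theta) + \theta y$, which is nothing but the tangent-line bound for the concave function $y \mapsto y^\theta$ at $y = 1$ (equivalently, a one-line consequence of Young's inequality with conjugate exponents $1/\theta$ and $1/(1-\theta)$ applied to $y^\theta = y^\theta \cdot 1^{1-\theta}$). For $\theta = 0$ or $\theta = 1$ the inequality is trivial, and the condition $b \leq p$ required is exactly the hypothesis $p \geq \max_k b_k$.

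I would then finish by writing
\begin{equation*}
R[x] \;=\; \sum_{k=0}^{d_R} a_k x^{b_k}
\;\leq\; \sum_{k=0}^{d_R} a_k \left( \frac{p-b_k}{p} + \frac{b_k}{p}\, x^{p} \right)
\;=\; \underbrace{\frac{1}{p}\sum_{k=0}^{d_R} a_k (p - b_k)}_{=\, A_1} \;+\; \underbrace{\frac{1}{p}\sum_{k=0}^{d_R} a_k b_k}_{=\, A_2} \cdot x^{p},
\end{equation*}
where nonnegativity of the $a_k$ is what makes the termwise inequalities aggregate correctly.

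There is essentially no obstacle here; the only subtle point is noticing that one wants to interpolate each monomial $x^{b_k}$ between the extremes $x^0 = 1$ and $x^p$ at the weight $\theta = b_k/p$, which is precisely the weight that makes the multiplier of $x^p$ equal to $b_k/p$ and matches the definition of $A_2$. This is why the specific coefficients $A_1, A_2$ in the statement are the natural ones: they are dictated by the tangent-line bound, not chosen ad hoc.
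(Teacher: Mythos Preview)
Your proof is correct and takes essentially the same approach as the paper: the paper also reduces to the single-term bound $x^b \leq \frac{p-b}{p} + \frac{b}{p}x^p$ (phrased there as the weighted AM--GM inequality, which is the same as your concavity/Young argument) and then sums term by term.
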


\textbf{A surprising observation:} Similarly, we can replace even the quadratic term from Taylor polynomial, $\frac 12 \normsM {y-x} x$, by an upper bound in the form $ A_1 + A_2 \normM {x-y}x^p$. This further simplifies the regularization and results in the Newton method with the \textbf{unbounded stepsize}
\begin{equation*}
x^+ = x- \lr\frac {1}{(\regc+1) \normM {\g(x^k)} {x^k} ^{* \expo}} \rr^{\frac 1{1+\expo}} \ls \h(x) \rs^{-1} \g(x).
\end{equation*}
As the gradient diminishes, the stepsize diverges to infinity. Yet, simultaneously, the functional value is guaranteed to not deteriorate by more than a constant factor. 
We refer the reader to the \Cref{sec:simplified_reg} for more details.

\section{Contributions}
Our contributions can be summarized as follows:
\begin{itemize}[leftmargin=*]
    \setlength\itemsep{0em}
    \item \textbf{Newton method as a third-order tensor method:}\\
    We analyze the stepsized Newton method for functions with Hölder continuous third-derivatives (\Cref{def:holder}). This reframes the classical second-order Newton method as a \textbf{third-order} method, bridging the gap between second-order methods and third-order tensor methods.
    
    \item \textbf{Simple stepsizes for fast global convergence:}\\
    We propose multiple stepsize schedules for the Newton method (\rn{}, Alg \ref{alg:rn}), leveraging \textbf{various} Hölder continuity assumptions (Def \ref{def:holder}). Although the stepsize is chosen to be a root of a non-quadratic polynomial, it is surprisingly \textbf{simple and directly computable}.
    
    Depending on the considered variant of the Hölder continuity assumption, they can achieve a global convergence rate up to $\cOb {k^{-3}}$ (\Cref{th:holder_un}). 
    These are the first Newton method stepsizes improving upon the rate \okd of \citet{hanzely2022damped}.

    Additionally, we establish the following guarantees:
    \begin{itemize}[leftmargin=*, noitemsep, topsep=0pt]
        \item a \textbf{local superlinear} convergence rate (\Cref{th:superlinear}), 
        
        \item a \textbf{global linear} convergence (Theorems \ref{th:linrate_1}, \ref{th:linrate2}) under additional assumption of finite \textit{$s$-relative size} (\Cref{def:relsize}) \citep{doikov2024super},
        
        \item and a \textbf{global superlinear} convergence (\Cref{th:optami_superlinear}) under the additional assumption of uniform star-convexity (\Cref{def:star-convexity}) of degree $s\geq 2$.
    \end{itemize}
    
    \item \textbf{Stepsize linesearches for unknown parameters:}\\
    In practice, smoothness constants are often unknown, requiring approximation or fine-tuning. To address this, we introduce a \textbf{linesearch} procedure \grls{} \eqref{eq:grls} and a \textbf{stepsize backtracking} method \un{} (\Cref{alg:un}), both of which provably converge as if the \textbf{optimal} parameterization was known in advance (Col \ref{col:linesearch}, Th \ref{th:global_universal}).

    \item \textbf{Guarantees for popular Newton linesearch:}\\
    As a byproduct of our analysis, we prove similar convergence guarantees for the popular Newton method with greedy linesearch \eqref{eq:greedy_linesearch} (Col \ref{col:greedy_linesearch}, Th \ref{th:optami_superlinear}). This is, to our best knowledge, the first result of such kind.

    \item \textbf{Experimental comparison:}\\
    In \Cref{sec:experiments}, we experimentally compare the proposed algorithms (\rn{}, \un{}, and \grls{}) with existing methods and demonstrate that they outperform their counterparts in most of the considered scenarios.
    
    Also, we observe that the stepsizes of linesearch procedure \grls{} closely resemble stepsizes of popular Greedy Newton linesearch. 
\end{itemize}

\subsection{Most relevant literature}
Our theoretical framework leverages multiple insights of works \citet{hanzely2022damped} and \citet{doikov2024super}. We will outline the key differences between those approaches.\\

Compared to our approach, the \aicn{} method of \citet{hanzely2022damped} is restricted to cubic regularization and achieves only an \okd{} convergence rate. In contrast, our schedules incorporate a range of smoothness notions, including the Hölder continuity of the third derivative, allowing \Cref{alg:rn} to achieve rates up to $\cOb {k^{-3}}$. Additionally, while \aicn{} requires prior knowledge of the smoothness constant, our backtracking linesearch \Cref{alg:un} provably converge as if the optimal parameterization was known in advance.

Furthermore, while \citet{hanzely2022damped} relies on cubic regularization, resulting in a stepsize that is the root of a quadratic polynomial, higher-order regularizations yield a stepsize that is the root of a higher-order polynomial. Surprisingly, we show that even with higher-order regularization, there is a unique positive root in the interval $(0,1]$, and we present algorithms (\Cref{alg:rn} and \Cref{alg:un}) that can operate without requiring any additional linesearch.\\

In comparison to \citet{doikov2024super}, which utilizes standard $l_2$ norms for regularization, our approach leverages the local Hessian norms suggested by \citep{hanzely2022damped}. By utilizing local norms, the minimizers of various regularization models \eqref{eq:intro_reg_un_model} align on the same line, naturally connecting different regularizations from a geometric perspective. Local norms also result in a simpler algorithm invariant to linear transformations (e.g., data scaling or choice of basis), which is a valuable property in practice, as it significantly reduces hyperparameter tuning.

We would like to highlight that our results explain the success of popular stepsize linesearches in the Newton direction. 
These insights have implications far beyond our newly proposed methods. In comparison, the results presented in \citet{doikov2024super} do not provide a novel theoretical explanation for any established method.


\section{Simple stepsize schedule} \label{sec:alg}
Now we are ready to present our new stepsize schedule.
\begin{theorem} \label{th:equivalence}
    For any constants $\regc,\expo \geq 0$, the following modifications of the Newton method are equivalent:
    \begin{align}
    \text{Regularize: } x^{k+1} &= x^k + \argmin_{y\in \R^d} \modelun {x^k}, \label{eq:un_reg}\\
    \text{Damping: } x^{k+1} &= x^k - \prk [\h(x^k)]^{-1} \g(x^k), \label{eq:un_newton}
    \end{align}
    where,
    \begin{align*}
        & \modelun{x} = \argmin_{y\in\R^d} \left \{ 
        \taylor x (y)
        + \frac \regc {2+\expo} \normM {y-x} x ^{2+\expo} \right\}, \\ 
         & \text{and } \prk \in (0,1] \text{ is the only positive root of polynomial}\\
         & P[\alpha] \eqdef 1- \alpha - \alpha^{1+\expo} \regc \normM {\g(x^k)} {x^k} ^{* \expo}.
    \end{align*}
    We call this algorithm Root Newton (\rn), \Cref{alg:rn}.
\end{theorem}
To simplify calculations, we reparametrize the \rn{} as
\gbox{$\alun \eqdef \pr^\expo \regc \normM{\g(x)} x ^{*\expo}$}, and $\alun \geq 0.$ 
Now, the polynomial $P$ simplifies to $P[\alpha] = 1-\alpha - \alpha \alun$ and for fixed $\alun$, the positive root of $P$ can be expressed as $\pr = \frac 1 {1+\alun}$, with $\pr \alun < 1.$

\subsection{Hölder continuity}
Our analysis is built upon the assumption that the function has Hölder continuous Hessian or third derivative. 
\begin{definition}\label{def:holder}
For $f: \rdtor,$ and $p\in \N,$ we say that $p$-times differentiable convex function is \emph{Hölder continuous} of $p$-th order, if for some $\nu \in [0,1]$ there exists a constant $\Lsup <\infty$, such thata $\forall x,y \in \R^d,$
\begin{align}
    \normM {\nabla^p f(x) - \nabla ^p f(y)} {op} \leq \Lsup \normM {x-y} x ^\nu, \qquad
\end{align}
We say that the $f$ has Hölder continuous Hessian if $\Lsupp 2<\infty$ (for some $\nu\in[0,1]$) and Hölder continuous third derivative if $\Lsupp 3<\infty$ (for some $\nu \in [0,1]$).  
\end{definition}
\newcommand{\gx}{g_x}
\begin{table*}[t]
    \centering
    \setlength\tabcolsep{4pt} 
    \begin{threeparttable}[b]
        {
            \renewcommand\arraystretch{2.2}
            \caption{Global convergence guarantees of stepsized Newton methods under various notions of Hölder continuity (\Cref{def:holder}). For simplicity, we report dependence only on the number of iterations $k$.}
            \label{tab:guarantees}
            \centering 
            \begin{tabular}{ccccc}\toprule[.2em]
                \bf Stepsize schedule & \makecell{ \bf Stepsize for \\ $\gx \eqdef \normMd{\g(x)} x$} &  \bf \makecell{Smoothness \\assumption} & \bf Global rate & \bf Reference \\
                \bottomrule[.2em]
                Damped Newton B & $\frac 1 {1+L_{sc} \gx}$\tn{0} & $L_{sc}$\tn{0} & $\cO\lr k^{-\frac 12} \rr$\tn{1} & \citep{nesterov1994interior}\tn{1} \\
                \hline
                \aicn{} & $\frac 2 {1+\sqrt{1+ 2\Lsuppnu 21 \gx}}$ \tn{2} & $\Lsuppnu 21$ & $\cO\lr k^{-2} \rr$ & \citep{hanzely2022damped} \\
                \hline
                \makecell{\rn{} \\ (\Cref{alg:rn})} & $\frac 1 {1+ (9\Lsup)^{\frac 1 {q-1}} \gx^{\frac {q-2} {q-1}}}$\tn{3} & $\Lsup$\tn{3} & $\cOb{k^{-(p+\nu-1)}}$\tn{3} & \makecell{\textbf{This work} \\(\Cref{th:global})} \\
                \hline
                \makecell{\grls{} \eqref{eq:grls}} & Linesearched & \makecell{$\Lsup$\tn{3}\\ (unknown)} & $\min_{p,\nu}$ $\cOb{k^{-(p+\nu-1)}}$\tn{3}  & \makecell{\textbf{This work} \\(\Cref{col:linesearch})}\\
                \hline
                \makecell{\un{} \\ (\Cref{alg:un})} & Backtracked & \makecell{$\Lsup$\tn{3}\\ (unknown)} & $\min_{p,\nu}$ $\cOb{k^{-(p+\nu-1)}}$\tn{3} & \makecell{\textbf{This work} \\ (\Cref{th:global_universal})} \\
                \hline
                \makecell{Greedy Newton\\
                \eqref{eq:greedy_linesearch}} & Linesearched & \makecell{$\Lsup$\tn{3}\\ (unknown)} & $\min_{p,\nu}$ $\cOb{k^{-(p+\nu-1)}}$\tn{3}  & \makecell{Folklore\\ Rate: \Cref{col:greedy_linesearch} \textbf{(new)}}\\
                \bottomrule[.2em]
            \end{tabular}
        }
        \begin{tablenotes}
            {
                \item [\color{red}(0)] Constant $L_{sc}$ represents self-concordance constant and is implied by $\Lsuppnu 21$-Hölder continuity.
                \item [\color{red}(1)] Authors show global decrease $f(x^{k+1})\leq f(x^k)-c$ for some $c>0.$ Rate $\cO (k^{-\frac 12} )$ is reported in \citet{hanzely2022damped}. We were unable to prove or find the convergence guarantee for Damped Newton B of the form $\cO(k^{-\alpha})$.
                \item [\color{red}(2)] We present a simplified form of the stepsize. Authors proposed \aicn{} stepsize in equivalent form $\frac {-1+\sqrt{1+ \Lsuppnu 21 \gx}}{\Lsuppnu 21 \gx}$.
                \item [\color{red}(3)] Parameters $p,\nu$ are fixed and satisfy $p\in \lc 2,3 \rc, \nu \in [0,1]$ and $p+\nu -1 \in [1,3]$.
            }
        \end{tablenotes}
    \end{threeparttable}
\end{table*}
In particular, $\Lsuppnu 3 0 = \normM{\nabla^3 f(x)-\nabla ^3 f(y)} {op}$ and $\Lsuppnu 2 1 = \sup_x \normM{\nabla^3 f(x)}{op}$ matches the definition of semi-strong self-concordance \citep{hanzely2022damped}. Function $\Lsup$ is log-convex in $\nu$ and hence for $0 \leq \nu_1 \leq \nu \leq \nu_2 \leq 1,$ hold
\begin{align*}
    \Lsup &\leq 
    \ls \Lsupnu {\nu_1} \rs^{\frac {\nu_2 -\nu} {\nu_2 -\nu_1}} 
    \ls \Lsupnu {\nu_2} \rs^{\frac {\nu -\nu_1} {\nu_2 -\nu_1}}, \\
    \Lsup &\leq \Lsupnu 0 ^{1-\nu} \Lsupnu 1 ^\nu.
\end{align*}
We will use the properties of the Hölder continuity summarized in the proposition below. 

\begin{proposition}\label{prop:holder_eq}
    $\Lsupp 2$-Hölder continuous functions satisfy
    \begin{equation*}
        \normMd { \g(y) - \g(x) - \h(x)[y-x] } x
        \leq \frac {\Lsupp 2 }{1+\nu} \normM{y-x} x ^{1+ \nu}.
    \end{equation*}
    $\Lsupp 3$-Hölder continuous functions satisfy
    \begin{align*}
        &\Big \Vert \g(y)-\g(x)-\h(x) \ls y-x \rs - \\
        & \quad \left. -\frac 12 \td(x)[y-x]^2 \right \Vert_x^* \leq \frac{\Lsupp 3}{(1+\nu)(2+\nu)} \normM{y-x}x^{2+\nu}.
    \end{align*}
\end{proposition}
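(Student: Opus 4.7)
My plan is to derive both inequalities via the fundamental theorem of calculus along the segment from $x$ to $y$, combined with \Cref{def:holder}. This is the classical template for Holder bounds on Taylor remainders and should go through with only routine calculations.

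For the first inequality, I would start from
\[\g(y) - \g(x) - \h(x)\ls y-x \rs = \int_0^1 \ls \h(x+t(y-x)) - \h(x) \rs [y-x] \, dt,\]
which follows by applying the fundamental theorem of calculus to $t \mapsto \g(x+t(y-x))$. Passing the local dual norm $\normMd{\cdot}{x}$ inside via the triangle inequality, bounding the linear action of the matrix difference by its $op$-norm times $\normM{y-x}{x}$, and invoking Holder continuity of the Hessian anchored at $x$, the integrand is at most $\Lsupp 2 \, t^{\nu} \normM{y-x}{x}^{1+\nu}$, using $\normM{t(y-x)}{x}^\nu = t^\nu \normM{y-x}{x}^\nu$. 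Then $\int_0^1 t^{\nu}\, dt = \tfrac{1}{1+\nu}$ yields the first bound.

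For the second inequality, I would iterate the same identity once more. Writing $\h(x+t(y-x)) - \h(x) = \int_0^t \td(x+s(y-x))[y-x]\,ds$ and using $\int_0^1\int_0^t 1\, ds\, dt = \tfrac12$, one obtains
\[\g(y) - \g(x) - \h(x)\ls y-x \rs - \tfrac12 \td(x)[y-x]^2 = \int_0^1 \int_0^t \ls \td(x+s(y-x)) - \td(x) \rs [y-x]^2 \, ds \, dt.\]
Move $\normMd{\cdot}{x}$ inside, bound the action of the tensor difference on $[y-x]^2$ by its $op$-norm times $\normM{y-x}{x}^2$, and apply Holder continuity of the third derivative at $x$; the integrand is then at most $\Lsupp 3 \, s^{\nu} \normM{y-x}{x}^{2+\nu}$, and the double integral $\int_0^1 \int_0^t s^{\nu}\, ds\, dt = \tfrac{1}{(1+\nu)(2+\nu)}$ produces the stated constant.

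The one subtle point is the compatibility between the $op$-norm in \Cref{def:holder} and the local dual norm $\normMd{\cdot}{x}$ on the left side. I would first verify that the operator norm in the definition is understood in the $\normM{\cdot}{x}$ pairing, so that $\normMd{A[h]}{x} \leq \normM{A}{op} \normM{h}{x}$ and its tensor analog for $\td$ hold by definition. Once this pairing is fixed, both inequalities reduce to the short integration sketched above and no additional convexity, smoothness, or optimization argument is required.
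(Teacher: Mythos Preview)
Your proposal is correct and follows essentially the same route as the paper: both arguments write the Taylor remainder as an integral (single for $p=2$, double for $p=3$) along the segment, pass the norm inside, and invoke the Holder bound to reduce to $\int_0^1 t^\nu\,dt$ and $\int_0^1\int_0^t s^\nu\,ds\,dt$ respectively. Your explicit flag about the $op$-norm being taken in the $\normM{\cdot}{x}$ pairing is a point the paper uses silently, so your write-up is in fact slightly more careful on that step.
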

For further discussion of smoothness constants, we refer the reader to \Cref{sec:smoothness_relations}.

\subsection{One-step decrease Hölder continuity}
We are going to show that from the Hölder continuity for sufficiently large $\alunk$ follows bound 
    \begin{align*}
        &\la \g(x^{k+1}), \ls \h(x^k)\rs^{-1} \g(x^k) \ra \\
        & \hspace{3cm} \geq \frac 1{2c_1(1-\prk)}  \normsMd{\g(x^{k+1})}{x^k},
    \end{align*}
for $c_1\in \lc 1,2\rc$, implying the one-step decrease as
 \begin{align}
        f(x^{k}) - f&(x^{k+1})\nonumber\\
        &\geq -\la \g (x^{k+1}), \xdiff \ra \nonumber \\
        &= \la \g(x^{k+1}), \prk \ls \h(x^k) \rs^{-1} \g(x^k) \ra \nonumber \\
        &\geq \frac {\prk} {2c_1 (1-\prk)}\normsMd { \g(x^{k+1})} {x^k} \nonumber\\
        &= \frac 1 {2c_1\alunk} \normsMd { \g(x^{k+1})} {x^k}. \label{eq:preholder}
    \end{align}

\begin{lemma}\label{le:holder_bound_two}
    Let $\normMd {\g(x^k)} {x^k} >0$, and $x^k\in \R^d, x^{k+1}=x^k-\prk \ls\h(x^k)\rs^{-1}\g(x^k)$, as \rn{}.    
    Hölder continuity of \textbf{Hessian} (\Cref{def:holder} with $p=2$) implies that for $\alunk$ larger than
    \begin{align}
        \alunk \geq \frac {\Lsupp 2} {1+\nu} \prk^\nu \normM {\g(x^k)}{x^k} ^{*\nu}, \label{eq:alunkmintwo}
    \end{align}
    holds
    \begin{align*}
        &\la \g(x^{k+1}), \ls \h(x^k)\rs^{-1} \g(x^k) \ra \\
        & \hspace{3.5cm}\geq \frac 1{2(1-\prk)}  \normsMd{\g(x^{k+1})}{x^k}.
    \end{align*}
\end{lemma}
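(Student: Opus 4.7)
My plan is to prove the lemma by expressing everything in terms of the ``residual vector'' $r_k \eqdef \nabla f(x^{k+1}) - (1-\prk)\nabla f(x^k)$. Observe first that the Newton step identity gives $\nabla^2 f(x^k)[x^{k+1}-x^k] = -\prk \nabla f(x^k)$, so
\[
r_k = \nabla f(x^{k+1}) - \nabla f(x^k) - \nabla^2 f(x^k)[x^{k+1}-x^k].
\]
Applying \Cref{prop:holder_eq} (the $p=2$ case of Holder continuity) together with the identity $\|h_k\|_{x^k} = \prk\|\nabla f(x^k)\|_{x^k}^*$ from \eqref{eq:transition-primdual}, I get
\[
\normMd{r_k}{x^k} \leq \frac{\Lsupp 2}{1+\nu}\,\prk^{1+\nu}\,\lr\normMd{\g(x^k)}{x^k}\rr^{1+\nu}.
\]

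Next, I would substitute $\g(x^{k+1}) = r_k + (1-\prk)\g(x^k)$ into both quantities appearing in the desired inequality and simply expand. A direct computation gives
\[
2(1-\prk)\la \g(x^{k+1}), [\h(x^k)]^{-1}\g(x^k)\ra - \normsMd{\g(x^{k+1})}{x^k} = (1-\prk)^2 \normsMd{\g(x^k)}{x^k} - \normsMd{r_k}{x^k},
\]
where the cross-terms $2(1-\prk)\la r_k, [\h(x^k)]^{-1}\g(x^k)\ra$ cancel exactly. Thus the claim reduces to the single pointwise bound
\[
\normMd{r_k}{x^k} \leq (1-\prk)\,\normMd{\g(x^k)}{x^k}.
\]

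Finally, I would translate the hypothesis \eqref{eq:alunkmintwo} into exactly this form. Since $\prk = \frac{1}{1+\alunk}$, the hypothesis $\alunk \geq \frac{\Lsupp 2}{1+\nu}\prk^\nu (\normMd{\g(x^k)}{x^k})^\nu$ is equivalent to $\prk\alunk = 1-\prk \geq \frac{\Lsupp 2}{1+\nu}\prk^{1+\nu}(\normMd{\g(x^k)}{x^k})^\nu$, which when multiplied through by $\normMd{\g(x^k)}{x^k}$ matches exactly the upper bound on $\normMd{r_k}{x^k}$ obtained from Holder continuity. Combining yields the stated inequality. The only nontrivial step is the algebraic cancellation in the expansion of $\normsMd{\g(x^{k+1})}{x^k}$ versus $2(1-\prk)\la \g(x^{k+1}),[\h(x^k)]^{-1}\g(x^k)\ra$; everything else is bookkeeping, and no extra Cauchy--Schwarz is needed, which is why the clean constant $\frac{1}{2(1-\prk)}$ comes out.
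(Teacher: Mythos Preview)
Your proof is correct and follows essentially the same approach as the paper: both introduce the residual $r_k=\g(x^{k+1})-(1-\prk)\g(x^k)$, bound $\normMd{r_k}{x^k}$ via \Cref{prop:holder_eq}, expand $\normsMd{r_k}{x^k}$ to obtain the identity relating $\la \g(x^{k+1}),[\h(x^k)]^{-1}\g(x^k)\ra$, $\normsMd{\g(x^{k+1})}{x^k}$, and $\normsMd{\g(x^k)}{x^k}$, and then use $1-\prk=\prk\alunk$ together with \eqref{eq:alunkmintwo} to close the argument. Your presentation via the cancellation identity is slightly tidier, but the underlying steps are identical.
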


\begin{lemma} \label{le:holder_bound_three}
    Let $\normMd {\g(x^k)} {x^k} >0$, and $x^k\in \R^d, x^{k+1}=x^k-\prk \ls\h(x^k)\rs^{-1}\g(x^k)$, as \rn{}.
    Hölder continuity of the \textbf{third derivative} (\Cref{def:holder} with $p=3$) implies that for $\alunk$ larger than
    \begin{align}
        \alunk 
        &\geq  \prk \normMd {\g(x^k)}{x^k} \max \left \{ 6 \lr \frac  {\Lsupp 3 } {1+\nu} \rr ^{\frac 1 {1+\nu}}, \nonumber \right. \\
        & \hspace{1cm} \left .\frac {\sqrt {3} \Lsupp 3 }{(1+\nu) (2+ \nu)}  \lr\prk \normMd {\g(x^k)}{x^k} \rr ^{\nu} \right \},  \label{eq:alunkminthree}
    \end{align}    
    holds
    \begin{align*}
        &\la \g(x^{k+1}), \ls \h(x^k) \rs ^{-1}\g(x^{k}) \ra\\ 
        & \hspace{3.5cm}\geq \frac 1{4(1-\prk)} \normsMd { \g(x^{k+1})}{x^k}.
    \end{align*}
\end{lemma}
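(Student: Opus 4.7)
The plan is to expand $\g(x^{k+1})$ by the third-order Taylor formula with the Hölder remainder from Proposition \ref{prop:holder_eq}, and then handle both sides of the target inequality by a direct calculation rather than by invoking the triangle inequality on the norm. Writing $u \eqdef 1-\prk$, $G \eqdef \normMd{\g(x^k)}{x^k}$, and $h_k \eqdef -\prk [\h(x^k)]^{-1}\g(x^k)$, the Newton identity $\h(x^k)\,h_k = -\prk\,\g(x^k)$ combined with the third-order Taylor expansion yields the decomposition
$$
\g(x^{k+1}) \;=\; u\,\g(x^k) + e, \qquad e \eqdef \tfrac12\,\td(x^k)[h_k]^2 + r_k,
$$
where, by Proposition \ref{prop:holder_eq}, $\normMd{r_k}{x^k} \le \frac{\Lsupp 3}{(1+\nu)(2+\nu)}\normM{h_k}{x^k}^{2+\nu}$. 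Denote $E \eqdef \normMd{e}{x^k}$.

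Next I would expand both quantities appearing in the claim exactly, using symmetry of $[\h(x^k)]^{-1}$:
$$
\la \g(x^{k+1}), [\h(x^k)]^{-1}\g(x^k)\ra = uG^2 + \alpha,\qquad \normsMd{\g(x^{k+1})}{x^k} = u^2G^2 + 2u\alpha + E^2,
$$
with $\alpha \eqdef \la e,[\h(x^k)]^{-1}\g(x^k)\ra$. Substituting these into the target $\la\g(x^{k+1}),[\h(x^k)]^{-1}\g(x^k)\ra \ge \normsMd{\g(x^{k+1})}{x^k}/(4u)$ and multiplying by $4u$ makes the cross term in $\alpha$ cancel favorably, reducing the statement to $3u^2 G^2 + 2u\alpha - E^2 \ge 0$. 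Invoking Cauchy--Schwarz $\alpha \ge -EG$ then leaves the purely algebraic inequality $3u^2 G^2 - 2uEG - E^2 \ge 0$, which is equivalent to $E \le uG$.

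It remains to bound $E$ under the assumption \eqref{eq:alunkminthree}. By the triangle inequality together with Lemma \ref{le:td_bound} and Proposition \ref{prop:holder_eq},
$$
E \;\le\; \lr \frac{\Lsupp 3}{1+\nu}\rr^{\frac{1}{1+\nu}}\normsM{h_k}{x^k} + \frac{\Lsupp 3}{(1+\nu)(2+\nu)}\normM{h_k}{x^k}^{2+\nu}.
$$
Plugging in $\normM{h_k}{x^k} = \prk G$ and using $\prk\alunk = u$, the ``$6$''-term inside the max in \eqref{eq:alunkminthree} yields $\lr\Lsupp 3/(1+\nu)\rr^{1/(1+\nu)}\prk^2 G^2 \le uG/6$, and the ``$\sqrt 3$''-term yields $\normMd{r_k}{x^k} \le uG/\sqrt{3}$. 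Summing gives $E \le (1/6 + 1/\sqrt{3})\,uG < uG$, closing the argument.

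The step I expect to be the main obstacle is the bookkeeping in the second paragraph: keeping $\alpha$ in signed form rather than dominating $|\alpha|$ in the two inequalities separately. A cruder approach based on $\normMd{\g(x^{k+1})}{x^k} \le uG + E$ would tighten the required bound from $E \le uG$ to $E \le (2\sqrt 3 - 3)uG \approx 0.464\,uG$, forcing the constants $6$ and $\sqrt 3$ inside the max in \eqref{eq:alunkminthree} to roughly double; it is the exact expansion of $\normsMd{\g(x^{k+1})}{x^k}$ that preserves the clean constant $\tfrac{1}{4(1-\prk)}$.
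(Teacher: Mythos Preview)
Your proof is correct and follows essentially the same route as the paper's: expand the third-order Taylor identity $\g(x^{k+1}) = (1-\prk)\g(x^k) + \tfrac12\td(x^k)[h_k]^2 + r_k$, open up the squared dual norm, and use Cauchy--Schwarz to reduce to a size condition on the error that is verified from~\eqref{eq:alunkminthree}. The only organizational difference is that you bundle $e=\tfrac12\td(x^k)[h_k]^2+r_k$ from the outset, so your expansion has three terms and reduces to the single criterion $E\le uG$, whereas the paper keeps the cubic term and the Hölder remainder separate, squares $\normMd{r_k}{x^k}\le\cdots$ directly, and ends up with three inequalities \eqref{eq:regc_threea}--\eqref{eq:regc_threec} (the first being dominated by the second). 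Your packaging is tidier but the substance is identical.
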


\subsection{Generalized one-step decrease}

In \Cref{le:holder_bound_two} and \Cref{le:holder_bound_three}, the requirement on $\alunk$ is dependent on $\prk$. We can use the following observation to derive a bound dependent only on the norm of the gradient. 
\begin{lemma}\label{le:alunk_alpha}
     For $c_3, \delta>0$, choice $\alunk \geq c_3^{\frac 1{1+\delta}} \normM{\g(x^k)}{x^k} ^{*\frac {\delta}{1+\delta}}$
    ensures
        $\alunk \geq c_3 \lr \prk \normMd{\g(x^k)}{x^k} \rr ^{\delta}.$
\end{lemma}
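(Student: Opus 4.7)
The plan is to show this is essentially a direct manipulation, relying on the key identity $\prk(1+\alunk)=1$ that comes from $\prk$ being the root of $P[\alpha]=1-\alpha-\alpha\alunk$ in the simplified parametrization. This immediately gives $\prk\alunk\leq 1$, which is the only ingredient needed beyond algebra.

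First I would recall that under the reparametrization $\alunk \eqdef \prk^{\expo}\regc\normM{\g(x^k)}{x^k}^{*\expo}$ described right before Section 2, the polynomial in \eqref{eq:un_alpha} becomes $P[\alpha]=1-\alpha-\alpha\alunk$, whose unique positive root satisfies $\prk=\frac{1}{1+\alunk}$, and therefore
\begin{equation*}
\prk\,\alunk \;=\; 1-\prk \;\leq\; 1,
\qquad\text{i.e.,}\qquad \prk \;\leq\; \alunk^{-1}.
\end{equation*}

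Next I would use this bound to upper estimate the quantity appearing in the desired conclusion:
\begin{equation*}
c_3\bigl(\prk\,\normMd{\g(x^k)}{x^k}\bigr)^{\delta}
\;\leq\; c_3\,\alunk^{-\delta}\,\bigl(\normMd{\g(x^k)}{x^k}\bigr)^{\delta}.
\end{equation*}
It remains to show the right-hand side is at most $\alunk$, which, after multiplying through by $\alunk^{\delta}$ and taking $(1+\delta)$-th roots, is equivalent to
\begin{equation*}
\alunk \;\geq\; c_3^{\frac{1}{1+\delta}}\,\bigl(\normMd{\g(x^k)}{x^k}\bigr)^{\frac{\delta}{1+\delta}},
\end{equation*}
which is precisely the hypothesis. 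Chaining the two inequalities yields $\alunk\geq c_3(\prk\normMd{\g(x^k)}{x^k})^{\delta}$, as required.

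There is no real obstacle here; the only thing to be careful about is confirming that the simplified polynomial identity $\prk\alunk = 1-\prk$ is legitimate, but this follows immediately from \Cref{th:equivalence} together with the reparametrization stated just below it. If one prefers to avoid invoking the reparametrization, the same inequality $\prk\alunk\leq 1$ can be read directly off \eqref{eq:un_alpha} by noting $\prk^{1+\expo}\regc\normM{\g(x^k)}{x^k}^{*\expo}=1-\prk\leq 1$ and multiplying by $\prk^{-\expo}$ (or equivalently dividing the defining relation by $\prk$). Either derivation is a one-liner, so the proof should be very short.
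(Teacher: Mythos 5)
Your proof is correct and follows essentially the same route as the paper's: both arguments rest on the single fact $\prk \le \alunk^{-1}$ (from $\prk=\frac{1}{1+\alunk}$, i.e.\ $\prk\alunk=1-\prk\le 1$), after which the hypothesis $\alunk \geq c_3^{\frac 1{1+\delta}} \normM{\g(x^k)}{x^k} ^{*\frac {\delta}{1+\delta}}$ is just the algebraic rearrangement of $\alunk \ge c_3\alunk^{-\delta}\normM{\g(x^k)}{x^k}^{*\delta}$. No gaps; your write-up is if anything slightly more explicit than the paper's about where $\prk\alunk\le 1$ comes from.
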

With \Cref{le:alunk_alpha}, we can unify the cases $p \in \{2,3\}$ (see \Cref{col:holder_three} for the additional explanation). Let us reparametrize as \gbox{$q\eqdef p+\nu \in [2,4]$}, \gbox{$\Lq \eqdef \Lsup$}.
\begin{theorem} \label{th:holder_un}
    Let $\normMd {\g(x)} x >0$. 
    Hölder continuity (\Cref{def:holder}) with $q=p+\nu\in[2,4]$ for points $x^k, x^{k+1}=x^k-\prk \ls\h(x^k)\rs^{-1}\g(x^k)$ from $\rn{}$ implies that for $\alunk$ such that
    \begin{align}
        \alunk 
        &\geq \lr 9\Lq \rr^{\frac 1 {q-1}}  \normM {\g(x^k)}{x^k} ^{*\frac {q-2}{q-1}}     \label{eq:alunkminun}   
    \end{align}    
    holds
    \begin{align}
        &\la \g(x^{k+1}), \ls \h(x^k) \rs ^{-1}\g(x^{k}) \ra \nonumber \\ 
        & \hspace{3cm} \geq \frac 1{2\prk\alunk} \normsMd { \g(x^{k+1})}{x^k}. \label{eq:ip_un}
    \end{align}
    In particular, in view of \eqref{eq:preholder}, we have that the choice $\alunk = \lr 9\Lq \rr ^{\frac 1 {q-1}} \normM {\g(x^k)}{x^k}^{*\frac {q-2}{q-1}}$ guarantees decrease
    \begin{align}
        f(x^k)-f(x^{k+1})
        \geq \frac 12 \lr \frac {1} {9\Lq} \rr^{\frac 1 {q-1}} \frac{\normsMd {\g(x^{k+1})}{x^k}}{ \normM {\g(x^k)}{x^k}^{*\frac{q-2}{q-1}} }. \label{eq:one_step_gen}
    \end{align}
\end{theorem}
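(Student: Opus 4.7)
The plan is to obtain this theorem as a unified reformulation of the two case-specific lemmas \Cref{le:holder_bound_two} (for $p=2$) and \Cref{le:holder_bound_three} (for $p=3$), where the $\alpha_k$-dependent conditions on $\alunk$ are replaced by gradient-only conditions via \Cref{le:alunk_alpha}, and then matched against the uniform requirement \eqref{eq:alunkminun}. The one-step functional decrease \eqref{eq:one_step_gen} will then follow by plugging \eqref{eq:ip_un} into the chain \eqref{eq:preholder}.

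First, I would split into two cases by $p\in\{2,3\}$. In case $p=2$ (so $q=2+\nu\in[2,3]$), \Cref{le:holder_bound_two} asks for $\alunk\geq \tfrac{\Lsupp 2}{1+\nu}\prk^\nu\normMd{\g(x^k)}{x^k}^{\nu}$; apply \Cref{le:alunk_alpha} with $\delta=\nu$ and $c_3=\Lsupp 2/(1+\nu)$ to obtain the sufficient gradient-only condition $\alunk\geq (\Lsupp 2/(1+\nu))^{1/(1+\nu)}\normMd{\g(x^k)}{x^k}^{\nu/(1+\nu)}$. Since $q-1=1+\nu$ and $(q-2)/(q-1)=\nu/(1+\nu)$, and $\Lsupp 2/(1+\nu)\leq \Lq$, the hypothesis \eqref{eq:alunkminun} with factor $9\Lq$ easily dominates this bound.

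In case $p=3$ (so $q=3+\nu\in[3,4]$), \Cref{le:holder_bound_three} imposes a max of two requirements. The first term, $6(\Lsupp 3/(1+\nu))^{1/(1+\nu)}\prk\normMd{\g(x^k)}{x^k}$, has form $c_3(\prk\normMd{\g(x^k)}{x^k})^{1}$ and via \Cref{le:alunk_alpha} with $\delta=1$ reduces to $\alunk\geq \sqrt{6}(\Lsupp 3/(1+\nu))^{1/(2(1+\nu))}\normMd{\g(x^k)}{x^k}^{1/2}$. The second term, $\tfrac{\sqrt 3\,\Lsupp 3}{(1+\nu)(2+\nu)}(\prk\normMd{\g(x^k)}{x^k})^{1+\nu}$, has exponent $\delta=1+\nu$, so via \Cref{le:alunk_alpha} it reduces to $\alunk\geq (\sqrt 3\,\Lsupp 3/((1+\nu)(2+\nu)))^{1/(2+\nu)}\normMd{\g(x^k)}{x^k}^{(1+\nu)/(2+\nu)}$. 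The second of these has exactly the exponent $(q-2)/(q-1)=(1+\nu)/(2+\nu)$ promised by \eqref{eq:alunkminun}, and the constant $\sqrt 3/((1+\nu)(2+\nu))\leq 9$ makes \eqref{eq:alunkminun} sufficient; the first has exponent $1/2$ and is dominated in the relevant regime, as explained in \Cref{col:holder_three}.

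Having thereby verified that \eqref{eq:alunkminun} triggers the hypotheses of either \Cref{le:holder_bound_two} or \Cref{le:holder_bound_three}, both conclusions read
\[
\langle \g(x^{k+1}),\,[\h(x^k)]^{-1}\g(x^k)\rangle \;\geq\; \tfrac{1}{2c_1(1-\prk)}\,\normsMd{\g(x^{k+1})}{x^k},
\]
with $c_1\in\{1,2\}$. Using the identity $\prk=1/(1+\alunk)$ from \Cref{th:equivalence}, which gives $\prk/(1-\prk)=1/\alunk$, the right-hand side becomes $\tfrac{1}{2\prk\alunk}\normsMd{\g(x^{k+1})}{x^k}$, yielding \eqref{eq:ip_un}. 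Finally, substituting this into \eqref{eq:preholder} and using the specific choice $\alunk=(9\Lq)^{1/(q-1)}\normMd{\g(x^k)}{x^k}^{(q-2)/(q-1)}$ gives \eqref{eq:one_step_gen}.

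The main obstacle I expect is bookkeeping the constants in \Cref{le:holder_bound_three}: the two terms in its max become binding in different regimes of $\normMd{\g(x^k)}{x^k}$, and showing that the single clean factor $9$ in \eqref{eq:alunkminun} absorbs the case-specific constants $6$, $\sqrt 3/((1+\nu)(2+\nu))$, and the $1/(1+\nu)$ raised to fractional powers uniformly across $\nu\in[0,1]$ will require a short monotonicity or case check rather than a heavy derivation.
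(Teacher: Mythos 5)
Your overall route is the same as the paper's: split into $p=2$ and $p=3$, feed \Cref{le:holder_bound_two} and \Cref{le:holder_bound_three} through \Cref{le:alunk_alpha} to get gradient-only thresholds (this is precisely \Cref{col:holder_three}), then convert $1-\prk=\prk\alunk$ and plug into \eqref{eq:preholder}. The genuine gap is in your handling of the \emph{first} term of the max in \eqref{eq:alunkminthree}. Applying \Cref{le:alunk_alpha} with $\delta=1$ reduces it to $\alunk\ge\sqrt{6}\lr \Lsupp 3/(1+\nu)\rr^{\frac 1{2(1+\nu)}}\normM{\g(x^k)}{x^k}^{*\frac 12}$, whose gradient exponent $\tfrac 12$ is strictly smaller than the exponent $\tfrac{q-2}{q-1}=\tfrac{1+\nu}{2+\nu}$ in \eqref{eq:alunkminun} whenever $\nu>0$. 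Hence in the small-gradient regime $\normMd{\g(x^k)}{x^k}\to 0$ (exactly the regime the theorem must cover), the threshold in \eqref{eq:alunkminun} is of order $\normMd{\g(x^k)}{x^k}^{\frac{1+\nu}{2+\nu}}$, which falls below your required threshold of order $\normMd{\g(x^k)}{x^k}^{\frac 12}$ regardless of constants, so \eqref{eq:alunkminun} does \emph{not} imply your sufficient condition. Your appeal to ``dominated in the relevant regime, as explained in \Cref{col:holder_three}'' does not close this: the corollary never dominates an exponent-$\tfrac 12$ condition; it treats this term by a different reduction, using $\prk\le \alunk^{-\frac 1{1+\nu}}$ (rather than the cruder $\prk\le\alunk^{-1}$ underlying \Cref{le:alunk_alpha} with $\delta=1$) to obtain $\alunk\ge\lr 6^{1+\nu}\Lsupp 3/(1+\nu)\rr^{\frac 1{2+\nu}}\normM{\g(x^k)}{x^k}^{*\frac{1+\nu}{2+\nu}}$, whose exponent matches \eqref{eq:alunkminun} and can then be compared constant-to-constant. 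You need that step for $p=3$; with it, your argument coincides with the paper's.

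Two smaller points on constants. First, for $p=3$ \Cref{le:holder_bound_three} gives $\tfrac 1{4(1-\prk)}=\tfrac 1{4\prk\alunk}$, not $\tfrac 1{2\prk\alunk}$, so your claim that both lemmas yield \eqref{eq:ip_un} with the factor $\tfrac 12$ is off by a factor of two in that case (the paper is itself loose here: its proof passes directly to a one-step decrease with a $\tfrac 1{12}$-type prefactor for $p=3$ rather than deriving \eqref{eq:ip_un} with $\tfrac 1{2\prk\alunk}$ from the lemma). Second, absorbing the $p=3$ constants into the single factor $9$ amounts, after the corrected reduction, to checking $9\ge 6^{1+\nu}/(1+\nu)$, which is not automatic uniformly over $\nu\in[0,1]$; you rightly flag this bookkeeping as the delicate part, and it is exactly where a careful case check (or a slightly larger absolute constant) is required rather than a one-line domination.
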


This naturally leads to an optimization algorithm \rn{}.
\begin{algorithm} 
    \caption{\rn{}: Root Newton stepsize schedule} \label{alg:rn}
    \begin{algorithmic}[1]
        \State \textbf{Requires:} Initial point $x^0 \in \R^d,$ Hölder continuity exponent $q\in [2,4]$ and constant $\Lq< \infty$.
        \For {$k=0,1,2\dots$}
            \State $n^k = \ls\h(x^k)\rs^{-1}\g(x^k)$ \Comment{Newton direction}
            \State $g_k = \la \g(x^k), n^k \ra ^{\frac 12}$ \Comment{$g_k= \normMd{\g(x^k)} {x^k}$}
            \State $\alunk = \lr 9\Lq \rr^{\frac 1 {q-1}} g_k^{\frac{q-2}{q-1}}$
            \Comment{Sufficient regularization}
            \State $\prk = \frac 1 {1+\alunk}$
            \Comment {$\prk$ is the root of $P[\alpha]$}
            \State $x^{k+1} = x^k - \prk n^k$ \Comment{Step, $x^k = \modelunlexpo{\regck}{\expo}{x^k}$}
        \EndFor
    \end{algorithmic}
\end{algorithm}

\section{Convergence garantees of \rn{}}
Denote the functional suboptimality \gbox {$f_k\eqdef f(x^k) - \fopt,$} the initial level set \gbox{$\level \eqdef \left\{ x\in\R^d: f(x) \leq f(x^0) \right\},$} and its diameter as \gbox{$D \eqdef \sup_{x,y \in \level} \normM {x-y}x.$}
Note that convexity and bounded diameter of $\level$, $D<\infty$ together imply $D \normMd {\g(x^k)} {x^k} \geq f_k$.
We need the Hessian not to change much between iterations to guarantee the global convergence rate.
\begin{assumption}\label{def:cbound}
There exists a $\cbound >0$ bounding norms of the gradients in the consecutive iterates,
 \begin{align*}
    \cbound \leq \frac{\normsMd {\g(x^{k+1})}{x^k}}{ \normsMd {\g(x^{k+1})}{x^{k+1}}}. 
 \end{align*}    
\end{assumption}
Required $\cbound$ exists in many cases. 
        For $L$-smooth $\mu$-strongly convex functions, $\cbound=\frac \mu L$.
        For functions with $\hat c$-stable Hessian \citep{KSJ-Newton2018}, $\cbound=\hat c$.
        For $\Lstandard$-self-concordant functions, it holds when the points $x,x^+$ are close to each other \citep{nesterov1994interior} or in the neighborhood of the solution (\Cref{prop:lsconv}).
        \begin{proposition} [\citet{hanzely2022damped}, Lemma 4]
        	\label{prop:lsconv}
        		For convex $\Lstandard$-self-concordant function $f: \R^d \to \R$ and for any $0<c_4<1$ in the neighborhood of solution $x^k \in \left\{ x :  \normMd {\g (x)} {x} \leq \tfrac{(2c_4+1)^2-1}{2\Lstandard} \right\}$ holds
            \begin{equation*}
        	\h(x^{k+1})^{-1} \preceq \left( 1 - c_4 \right)^{-2} \h(x^k)^{-1}. 
        		 \end{equation*}
        \end{proposition}
First, we present the local convergence of the \rn{}.
\begin{theorem} \label{th:superlinear}
Let function $f: \R^d \to \R$ be convex, $\Lsup$-Hölder continuous ($q=p+\nu$) with $\cbound$-bounded Hessian change \eqref{def:cbound}. Algorithm \rn{} has a superlinear local convergence rate,
\begin{equation*}
    \normMd { \g(x^{k+1})}{x^{k+1}} 
    \leq \frac 2 \cbound \lr 9\Lq \rr^{\frac 1 {q-1}}  \normM {\g(x^k)}{x^k} ^{*\lr 2-\frac {1}{q-1} \rr}.
\end{equation*}
\end{theorem}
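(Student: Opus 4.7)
The proof naturally combines three ingredients already established in the excerpt: the one-step inner product bound \eqref{eq:ip_un} from \Cref{th:holder_un}, the Cauchy--Schwarz inequality applied in the $[\h(x^k)]^{-1}$-inner product, and the bounded Hessian change assumption \eqref{def:cbound}. The idea is to start from \eqref{eq:ip_un}, upper bound its left-hand side via Cauchy--Schwarz by a product of two dual norms, cancel a factor of $\normMd{\g(x^{k+1})}{x^k}$, then substitute the explicit form of $\alunk$ from \Cref{alg:rn}, and finally transport the resulting norm from $x^k$ to $x^{k+1}$ using \eqref{def:cbound}.

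Concretely, I would begin from
\begin{equation*}
\la \g(x^{k+1}),[\h(x^k)]^{-1}\g(x^k)\ra \;\ge\; \frac{1}{2\prk\alunk}\normsMd{\g(x^{k+1})}{x^k},
\end{equation*}
and observe that Cauchy--Schwarz in the inner product $\la \cdot,[\h(x^k)]^{-1}\cdot\ra$ (which is valid since $\h(x^k)^{-1}$ is SPD) gives
\begin{equation*}
\la \g(x^{k+1}),[\h(x^k)]^{-1}\g(x^k)\ra \;\le\; \normMd{\g(x^{k+1})}{x^k}\cdot\normMd{\g(x^k)}{x^k}.
\end{equation*}
Chaining these two inequalities and cancelling one factor of $\normMd{\g(x^{k+1})}{x^k}$ yields $\normMd{\g(x^{k+1})}{x^k} \le 2\prk\alunk\,\normMd{\g(x^k)}{x^k}$.

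Next, I would use $\prk\in(0,1]$ (from \Cref{th:equivalence}) and the choice $\alunk = (9\Lq)^{\frac{1}{q-1}}\normMd{\g(x^k)}{x^k}^{\frac{q-2}{q-1}}$ from \Cref{alg:rn} to substitute and simplify, relying on the elementary identity $1+\tfrac{q-2}{q-1}=2-\tfrac{1}{q-1}$, obtaining
\begin{equation*}
\normMd{\g(x^{k+1})}{x^k} \;\le\; 2(9\Lq)^{\frac{1}{q-1}}\,\normMd{\g(x^k)}{x^k}^{\,2-\frac{1}{q-1}}.
\end{equation*}
The last step is to pass from the norm at $x^k$ to the norm at $x^{k+1}$ via the bounded Hessian change assumption \eqref{def:cbound}, which provides exactly the required factor of $\tfrac{1}{\cbound}$ (up to a square root, depending on whether \eqref{def:cbound} is read as a bound on squared norms or on norms). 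This yields the stated conclusion.

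There is essentially no hard obstacle here: the heavy lifting was done in \Cref{th:holder_un}. The only care required is in the exponent bookkeeping — verifying $\prk\alunk\le\alunk$ with equality being loose enough not to matter, and confirming the arithmetic $1+\tfrac{q-2}{q-1}=\tfrac{2q-3}{q-1}=2-\tfrac{1}{q-1}$ — so the bound holds uniformly for all $q\in[2,4]$.
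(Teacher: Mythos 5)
Your proposal is correct and follows essentially the same route as the paper's proof: Cauchy--Schwarz in the $[\h(x^k)]^{-1}$ inner product applied to \eqref{eq:ip_un}, cancellation of one factor of $\normMd{\g(x^{k+1})}{x^k}$, substitution of $\alunk=(9\Lq)^{\frac 1{q-1}}\normM{\g(x^k)}{x^k}^{*\frac{q-2}{q-1}}$ together with $\prk\le 1$, and transport of the norm from $x^k$ to $x^{k+1}$ via \eqref{def:cbound}. The square-root caveat you flag (since \eqref{def:cbound} is stated for squared norms it literally yields a factor $\sqrt{\cbound}$ rather than $\cbound$) is present in the paper's own proof as well, so it is not a deviation from the paper's argument.
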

Now we quantify the global convergence rate following from \Cref{th:holder_un} and present the rate of \rn{}.
\begin{lemma}\label{le:one_step_to_rate}
    Let function $f: \R^d \to \R$ be convex with $\cbound$-bounded Hessian change \eqref{def:cbound} and the bound level sets with diameter $D$. If an algorithm $\mathcal A$ generates the iterates $\lc x^k\rc_{k=1}^{n}$ with one-step decrease for $q \geq 2$ and $\cc\geq 0$ as
    \begin{equation} \label{eq:one_step_to_rate_requirement}
        f(x^k)-f(x^{k+1})
        \geq \cc \frac{\normsMd {\g(x^{k+1})}{x^k}}{ \normM {\g(x^k)}{x^k}^{*\frac{q-2}{q-1}} },
    \end{equation}
    then $\mathcal A$ has the global convergence rate
    \begin{equation*}
        f_n \leq\frac {D^q \lr 2\cbound (q-1) \rr ^{q-1}}{\cc^{q-1} n^{q-1}} + \normMd {\g(x^0)} {x^{0}} D \exp\lr -\frac k4\rr.
    \end{equation*}
\end{lemma}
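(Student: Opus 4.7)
The proof combines a polynomial-rate telescoping driven by a standard potential on $f_k$ with an exponential ``warm-up'' accounting for possible fast early decay of the gradient.

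First I would switch the mixed norm in \eqref{eq:one_step_to_rate_requirement} to self-norms. The Hessian-stability assumption \eqref{def:cbound} gives $\|\nabla f(x^{k+1})\|_{x^k}^{*2}\geq\gamma\,\|\nabla f(x^{k+1})\|_{x^{k+1}}^{*2}$, so writing $g_j:=\|\nabla f(x^j)\|_{x^j}^{*}$, the one-step decrease becomes
\[
f_k-f_{k+1}\;\geq\;c_5\gamma\,\frac{g_{k+1}^{2}}{g_k^{(q-2)/(q-1)}}.
\]
Since $f_k$ is non-increasing, every iterate lies in $\level$, and convexity together with the diameter bound yields $f_j\leq Dg_j$ for all $j$.

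Next I would perform a per-iteration dichotomy: call iteration $k$ \emph{good} if $g_{k+1}\geq g_k/2$, else \emph{bad}. For a good iteration, using $2-(q-2)/(q-1)=q/(q-1)$ and $(g_{k+1}/g_k)^{(q-2)/(q-1)}\geq 1/2$,
\[
\frac{g_{k+1}^{2}}{g_k^{(q-2)/(q-1)}}\;\geq\;\frac{g_{k+1}^{q/(q-1)}}{2}\;\geq\;\frac{f_{k+1}^{q/(q-1)}}{2D^{q/(q-1)}},
\]
so $f_k-f_{k+1}\geq C\,f_{k+1}^{q/(q-1)}$ with $C:=c_5\gamma/(2D^{q/(q-1)})$. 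The standard potential $\phi_k:=f_k^{-1/(q-1)}$ then satisfies $\phi_{k+1}-\phi_k\geq C/(4(q-1))$ in the sub-case $f_{k+1}\geq f_k/2$; in the complementary sub-case $f_k$ itself halves and $\phi$ still grows by the mean-value inequality. Telescoping over the good iterations in $[0,n)$ produces the polynomial term. In a bad iteration the gradient halves, so chained halvings together with convexity $f_n\leq Dg_n$ produce the exponential term.

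Since for any split $n=n_A+n_B$ at least one of $n_A,n_B$ is at least $n/2$, summing the two bounds above is an upper bound on their maximum, which gives the stated sum. The main obstacle is that the gradient $g_k$ may \emph{grow} in good iterations, potentially undoing halvings from bad iterations; the clean remedy is to derive a uniform bound $g_k\leq\max\{g_0,(f_0/(c_5\gamma))^{(q-1)/q}\}$ from iterating $g_{k+1}^{2}\leq(f_k-f_{k+1})g_k^{(q-2)/(q-1)}/(c_5\gamma)$. Matching the exact constants $2(q-1)\gamma$ in the polynomial term and $1/4$ in the exponent is bookkeeping that requires a slight refinement of the dichotomy threshold (for instance $g_{k+1}\gtrless e^{-1/4}g_k$ rather than $g_{k+1}\gtrless g_k/2$), but no new ideas beyond what is described above.
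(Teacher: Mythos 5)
Your treatment of the good iterations (reducing to $f_k-f_{k+1}\gtrsim f_{k+1}^{q/(q-1)}$ and the potential $f_k^{-1/(q-1)}$) is serviceable, but the bad-iteration half of the dichotomy has a genuine gap, and it is precisely the obstacle you flag yourself: the halvings do not chain, and your remedy does not fix this. Writing $g_j=\normMd{\g(x^j)}{x^j}$, nothing in the hypotheses bounds the \emph{ratio} $g_{k+1}/g_k$ from above in a good iteration: from the one-step decrease and $f_k\le Dg_k$ one only gets $g_{k+1}^2\lesssim g_k^{1+\frac{q-2}{q-1}}$, i.e.\ $g_{k+1}\lesssim g_k^{1-\frac 1{2(q-1)}}$, so when $g_k$ is small the ratio $g_{k+1}/g_k$ can be arbitrarily large. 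Consequently, having $n_B\ge n/2$ bad iterations does not make $g_n$ exponentially small: bad steps that halve the gradient can alternate with good steps that regrow it, leaving $g_n$ of the same order as $g_0$ while half of all iterations are ``bad''. Your proposed patch, the uniform bound $g_k\le\max\{g_0,(f_0/(\cc\cbound))^{(q-1)/q}\}$, controls the absolute size of $g_k$, not the per-step growth factor, so it does not prevent the regrowth from undoing the halvings; all it yields is $f_n\le D\cdot\max\{g_0,\dots\}$, with no exponential decay. (The sub-case of the potential step where $f_k$ halves is also only sketched, but that part is patchable; the exponential term is not, as written.)

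The paper avoids this interleaving problem by never splitting iterations at all. Each potential increment is lower-bounded by a constant times the ratio $\normsMd{\g(x^{k+1})}{x^k}/\normsMd{\g(x^k)}{x^k}$; summing over $k$ and applying AM--GM turns the sum into $n$ times the geometric mean of these ratios, whose product telescopes (paying the Hessian-stability constant $\cbound$ once) to $\bigl(\normMd{\g(x^n)}{x^n}/\normMd{\g(x^0)}{x^0}\bigr)^{2/n}$. The case distinction is then made on this single aggregate quantity: either it is bounded below by a constant, which gives the $\cO(n^{-(q-1)})$ term, or $f_n\le D\,\normMd{\g(x^n)}{x^n}$ is already exponentially small, which gives the $\exp$ term. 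That aggregation over the whole trajectory is exactly what your per-iteration good/bad classification cannot reproduce; to repair your argument you would need to control the product of the good-iteration ratios, which in effect reconstructs the paper's AM--GM step.
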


\begin{theorem} \label{th:global}
     Let function $f: \R^d \to \R$ be convex, $\Lsup$-Hölder continuous ($q=p+\nu$) with $\cbound$-bounded Hessian change \eqref{def:cbound} and the bound level sets with diameter $D$. \rn{} (\Cref{alg:rn}) with known parameters $q,\Lq$ converges as 
    \begin{align*}
        f(x^k)-\fopt \leq
        & \frac {9\Lq D^q \lr 4 \cbound (q-1) \rr ^{q-1}}{k^{q-1}} \\
        & \hspace{15mm} + \normMd {\g(x^0)} {x^{0}} D \exp\lr -\frac k4\rr, 
    \end{align*}
    which in $\cO$ notation is simplifies to $\cO \lr \frac {\Lq D^q} {k^{q-1}} \rr$.
\end{theorem}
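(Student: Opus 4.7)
The plan is to combine the one-step decrease given by \Cref{th:holder_un} with the generic convergence recursion of \Cref{le:one_step_to_rate}, and simply match constants. Since \rn{} with the prescribed $\alunk = (9\Lq)^{1/(q-1)} \normMd{\g(x^k)}{x^k}^{(q-2)/(q-1)}$ falls exactly into the framework of \Cref{th:holder_un}, inequality \eqref{eq:one_step_gen} holds at every iteration with $\cc = \tfrac{1}{2} (9\Lq)^{-1/(q-1)}$. This is precisely the hypothesis \eqref{eq:one_step_to_rate_requirement} of \Cref{le:one_step_to_rate}.

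First I would verify that the assumptions carry over: convexity, Holder continuity $\Lsup$ with $q=p+\nu$, the $\cbound$-bounded Hessian-change property, and boundedness of the level set $\level$ with diameter $D$ are all in the hypothesis of the theorem, matching what \Cref{le:one_step_to_rate} needs. Moreover, the iterates of \rn{} remain in $\level$ because the one-step decrease from \Cref{th:holder_un} ensures monotonicity of the functional values $\lc f(x^k) \rc_{k\geq 0}$.

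Next, I would substitute $\cc = \tfrac{1}{2}(9\Lq)^{-1/(q-1)}$ into the bound from \Cref{le:one_step_to_rate}. Computing
\begin{equation*}
\cc^{q-1} = \frac{1}{2^{q-1}} \cdot \frac{1}{9\Lq},
\end{equation*}
so that
\begin{equation*}
\frac{D^q \lr 2\cbound(q-1)\rr^{q-1}}{\cc^{q-1} n^{q-1}}
= \frac{9\Lq D^q \lr 4\cbound(q-1)\rr^{q-1}}{n^{q-1}}.
\end{equation*}
Adding the exponential transient term from \Cref{le:one_step_to_rate} gives exactly the claimed bound.

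Because the theorem is a direct corollary of the two previously proved results, the only substantive obstacle is making sure that the one-step decrease \eqref{eq:one_step_gen} applies uniformly across all iterations; that is, that $\normMd{\g(x^k)}{x^k} > 0$ at every iteration (otherwise we are already at a stationary point and done) and that $x^k \in \level$ so that $D\normMd{\g(x^k)}{x^k} \geq f_k$ via convexity. Both follow from monotonicity of $f(x^k)$ under \rn{}. Once these are in place, the final rate $\cO(\Lq D^q / k^{q-1})$ is just a restatement of the dominant term.
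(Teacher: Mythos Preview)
Your proposal is correct and mirrors the paper's own proof exactly: the paper simply notes that \Cref{th:holder_un} yields the one-step decrease \eqref{eq:one_step_to_rate_requirement} with $\cc=\tfrac12(9\Lq)^{-1/(q-1)}$, then invokes \Cref{le:one_step_to_rate}, and the constants fall out as you computed. Your additional remarks on monotonicity and $\normMd{\g(x^k)}{x^k}>0$ are sound bookkeeping that the paper leaves implicit.
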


Note that the loss function can satisfy Hölder continuity (\Cref{def:holder}) with multiple different $\Lsup$, and therefore different pairs ($q$, $\Lq$) can be used. The best parametrization might not be known.

\section{Unknown parametrization}
To address unknown parameterization, we propose a stepsize linesearch Gradient-Regulated Line Search \grls{} simultaneously minimizing loss and gradient norms as
\begin{align}
    x^{k+1} &= \argmin_{y \in \lc x-\alpha n_{x^k} | \alpha \in [0,1] \rc} \frac {f(y)-f(x^k)}{\normsMd {\g(y)}{x^k}}, \label{eq:grls}
\end{align}
where \gbox{$\ndir \eqdef [\h(x)]^{-1}\g(x)$} is a shorthand for Newton's direction at point $x$. Linesearch \grls{} is directly minimizing bound \eqref{eq:one_step_to_rate_requirement} in \Cref{le:one_step_to_rate}, and therefore has the corresponding convergence rate. 

\begin{corollary}\label{col:linesearch}
     Let function $f: \R^d \to \R$, be convex, Hölder continuous with some $\Lq<\infty$, with $\cbound$-bounded Hessian change \eqref{def:cbound}, and the bound level sets with diameter $D<\infty$. 
    Linesearch \grls{} converges as $\min_{q\in [2,4]} \cO \lr \frac {\Lq D^q} {k^{q-1}} \rr$
    \begin{align}
        f(x^k)-\fopt 
        & \leq \min_{q\in [2,4]} \frac {9\Lq D^q \lr 4 \cbound (q-1) \rr ^{q-1}}{k^{q-1}} \nonumber \\
        & \hspace{1cm} + \normMd {\g(x^0)} {x^{0}} D \exp\lr -\frac k4\rr. \label{eq:linesearch_rate}
    \end{align}
\end{corollary}
Observe that for small stepsizes $\prk \in [0, \aup],$ for some $\aup \ll 1$, model differences are small $x^{k+1} \approx x^k$ and $\g(x^k) \approx \g(x^{k+1})$.
Therefore, expression \eqref{eq:grls} minimized by \grls{} can be approximated as
\begin{align}
    \frac {f(y)-f(x^k)}{\normsMd{\g(y)}{x^k}}
    \approx & 
     \frac {f(y)-f(x^k)}{\normsMd{\g(x^k)}{x^k}}, \label{eq:greedy_grls_approx}
\end{align}
and the minimizer of the right-hand-side is equivalent to the practically popular Newton method with greedy linesearch
\begin{align} \label{eq:greedy_linesearch}
    x^{k+1} &= 
    \argmin_{y \in \lc x^k-\alpha n_{x^k} | \alpha \in [0,1] \rc} f(y),
\end{align}
which we will call \emph{Greedy Newton} (\greedy{}). Leveraging this insight, we obtain the convergence rate for \eqref{eq:greedy_linesearch} in the corollary below. More details can be found in \Cref{sec:greedy_newton}.
\begin{corollary}\label{col:greedy_linesearch}
     Let function $f: \R^d \to \R$, be convex, $\Lq$-Hölder continuous for some $\Lq<\infty$, with $\cbound$-bounded Hessian change \eqref{def:cbound}, and the bound level sets with diameter $D<\infty$.
     If the Greedy Newton linesearch \eqref{eq:greedy_linesearch} satisfies the inequality $\normMd {\g(x^{k+1})} {x^k} \leq \csup \normMd{ \g(x^k) }{x^k}$ with some constant $\csup \geq 0$ for all iterates $x^k$, then it has convergence guarantee $\min_{q\in [2,4]} \cO \lr \frac {\Lq D^q \csup^{2(q-1)}} {k^{q-1}} \rr$
    \begin{align*}
        f(x^k)-\fopt 
        & \leq \min_{q\in [2,4]} \frac {9\Lq D^q \lr 4 \cbound \csup^2 (q-1) \rr ^{q-1}}{k^{q-1}} \\
        & \hspace {2cm}+ \normMd {\g(x^0)} {x^{0}} D \exp\lr -\frac k4\rr.
    \end{align*}
\end{corollary}
\begin{algorithm}[t] 
    \caption{\un{}: Universal stepsize backtracking procedure for the Newton method} \label{alg:un}
    \begin{algorithmic}[1]
        \State \textbf{Input:} Initial point $x^0 \in \R^d$, any constants $\expo \in \ls\frac 23,1\rs, \regc_0, \cbound>1$
        \Comment{$\expo \geq \frac {q-2}{q-1}$ for $q\in [2,4]$}
        \For {$k=0,1,2\dots$}
            \State $n^k = \ls\h(x^k)\rs^{-1}\g(x^k)$ \Comment{Newton direction}
            \State $g_k = \la \g(x^k), n^k \ra^{\frac 12} $ \Comment{$= \normMd{\g(x^k)} {x^k}$}
            \For{$j_k=0,1,2\dots$}
                \State $\alunkest {j_k}=\cbound^{j_k} \regck g_k^\expo$ \Comment{Increase regularization}
                \State $\pr_{k,j_k} = \frac 1 {1+\alunkest {j_k}}$ \Comment{Update stepsize}
                \State $x^k_{j_k} = x^k - \pr_{k,j_k} n^k$ \Comment{$= \modelunlexpo{\cbound^{j_k}\regck}{\expok}{x^k}$}
                \If{$\la \g(x^k_{j_k}), n^k \ra \geq \frac 1 {2\pr_{k,j_k} \alunkest {j_k}} \normsMd{\g(x^k_{j_k})} {x^k} $}  
                    \State $x^{k+1}=x^k_{j_k}$
                    \State $\regc_{k+1}=\cbound^{j_k-1} \regc_k$
                    \State \textbf{break}
                \EndIf
            \EndFor            
        \EndFor
    \end{algorithmic}
\end{algorithm} 

Linesearches \grls{} \eqref{eq:grls} and \greedy{} \eqref{eq:greedy_linesearch} have fast convergence guarantees without knowledge of smoothness parametrization $(q,\Lq)$, yet their implicit nature might not be suitable for all practical scenarios.
To remedy that, in the next section, we present a stepsize backtracking procedure with matching convergence guarantees.

\section{Universal stepsize backtracking}\label{sec:un}
Our backtracking procedure is based on the observation that the knowledge of the parametrization $(q,\Lq)$ in \rn{} (\Cref{alg:rn}) is required only for setting $\alunk$. We start with an estimate of $\alunk$ smaller than the true value and increase it until it achieves the theoretically predicted decrease.
We claim that the resulting algorithm, \un{}, \Cref{alg:un}, is well-defined with a bounded number of backtracking steps and a fast global convergence rate.

To formalize this claim, we first define a quantity to identify the smallest plausible true parameter $\alunk$ to be estimated first,
    \gbox{$\univ x \eqdef \inf_{q \in[2,4]} \lr 9\Lq \rr^{\frac 1 {q-1}} \normM {\g(x)} {x}^{*\lr\frac {q-2}{q-1}-\expo\rr},$}
for $q\in [2,4]$ and $\expo\geq \frac 23$.
\begin{lemma} \label{le:universal}
    If $\Lq < \infty$ for some $q\in [2,4]$, and the initial estimate $\regc_0$ small enough,
        $\regc_0 \leq \univ {x^0},$
    then all iterations $\lc x^k \rc_{k=0}^{n}$ of \un{}, such that $\normMd {\g(x^k)} {x^k} >0$, satisfy
        $\regc_{k+1}
        = \frac {\alunkest {j_k-1}}  {\normM {\g(x^{k})} {x^{k}}^{*\expo}}
        \leq \univ {x^k}.$
    Moreover, the total number $N_K$ of backtracking steps during the first $k$ iterations is bounded,
    \begin{align*}
        N_k \leq 2k + \log_c \frac {\univ {x^{k-1}}}{\regc_0}.
    \end{align*}
\end{lemma}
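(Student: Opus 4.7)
The plan is to establish both conclusions by an induction on the iteration index $k$, combined with a standard amortized counting argument for the total number of backtracking steps. The key enabling fact is \Cref{th:holder_un}, which quantifies the sufficient size of the regularization parameter: whenever the working estimate satisfies $\alunk \geq (9\Lq)^{1/(q-1)} \normM{\g(x^k)}{x^k}^{*(q-2)/(q-1)}$ for some valid Holder pair $(q,\Lq)$, the one-step decrease condition \eqref{eq:ip_un} holds and the backtracking loop terminates. Taking the infimum over all feasible $(q,\Lq)$ gives that termination is certainly triggered once the estimate reaches $\univ{x^k} \cdot \normM{\g(x^k)}{x^k}^{*\expo}$, regardless of which parametrization actually certifies Holder continuity.

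First I would prove $\regc_{k+1} \leq \univ{x^k}$ by induction. The base case $k=-1$ is the hypothesis $\regc_0 \leq \univ{x^0}$. For the inductive step, suppose the claim holds up to iteration $k-1$; inside iteration $k$, the backtracking procedure generates the sequence of trial values $\alunkest{0}, \alunkest{1}, \dots, \alunkest{j_k}$ obtained by scaling the initial guess (seeded from $\regc_k$) by $c>1$ until acceptance. By the first paragraph, acceptance occurs no later than when the scaled estimate first exceeds $\univ{x^k} \cdot \normM{\g(x^k)}{x^k}^{*\expo}$, hence the \emph{previous} candidate $\alunkest{j_k-1}$ must have been below this threshold (otherwise it would already have been accepted). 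Since the lemma defines $\regc_{k+1} = \alunkest{j_k-1}/\normM{\g(x^k)}{x^k}^{*\expo}$, we obtain $\regc_{k+1} \leq \univ{x^k}$.

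For the bound on $N_k$, I would use a telescoping / amortized argument. Each iteration $k$ contributes $j_k$ backtracking multiplications, during which the trial value is scaled by $c^{j_k}$. Tracking the net log-change of the regularizer across one outer iteration, the warm-start rule moves $\regc_{k+1}$ down by one step relative to the accepted $\alunkest{j_k}$, so $\log_c(\regc_{k+1}/\regc_k) = j_k - (\text{warm-start offset})$, where the warm-start offset is bounded by a universal constant (giving the leading coefficient $2$). Summing over $k=0,\dots,K-1$ telescopes the regularizer term and yields
\begin{equation*}
N_K \;=\; \sum_{k=0}^{K-1} j_k \;\leq\; 2K + \log_c\!\frac{\regc_K}{\regc_0} \;\leq\; 2K + \log_c\!\frac{\univ{x^{K-1}}}{\regc_0},
\end{equation*}
where the last inequality is the induction result already established.

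The main obstacle is the precise book-keeping in the second paragraph: one must show that the specific candidate $\alunkest{j_k-1}$ (i.e. the rejected one immediately before acceptance) was genuinely rejected by the backtracking test, which is guaranteed by the inductive hypothesis combined with the sharp termination threshold derived from \Cref{th:holder_un}. The edge case $\normM{\g(x^k)}{x^k}^{*}=0$ is excluded by the hypothesis $\normMd{\g(x^0)}{x^0} > 0$ together with the monotone functional decrease; otherwise the algorithm has already found the optimum and the statement is vacuous.
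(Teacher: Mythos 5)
Your overall strategy (induction on $k$ plus a telescoping count of inner trials via $\regc_{k+1}=\cbound^{j_k-1}\regc_k$) is the same as the paper's, and your handling of the case $j_k\geq 1$ is sound: a rejected candidate $\alunkest{j_k-1}$ cannot exceed the threshold $\univ{x^k}\,\normM{\g(x^k)}{x^k}^{*\expo}$, since by \Cref{th:holder_un} any candidate above that threshold passes the acceptance test; hence $\regc_{k+1}=\alunkest{j_k-1}/\normM{\g(x^k)}{x^k}^{*\expo}\leq\univ{x^k}$. Note that in this case the inductive hypothesis is not even needed.

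The genuine gap is the case $j_k=0$, i.e.\ when the very first trial is accepted. Then there is no rejected candidate, and the update rule gives $\regc_{k+1}=\regc_k/\cbound$, which must be compared against $\univ{x^k}$ \emph{at the new point}, while the inductive hypothesis only controls $\regc_k\leq\univ{x^{k-1}}$ at the old point. Your argument never bridges $\univ{x^{k-1}}$ and $\univ{x^k}$; this is exactly where the induction is actually used and where the assumptions enter. The paper closes this step by using \eqref{eq:consbound} (the accepted test at iteration $k-1$ together with the $\cbound$-bounded Hessian change) to relate $\normMd{\g(x^k)}{x^k}$ to $\normMd{\g(x^{k-1})}{x^{k-1}}$, and by the choice $\expo\geq\frac{q-2}{q-1}$, which makes the exponent in $\univ{\cdot}$ nonpositive, so that $\regc_{k+1}=\regc_k/\cbound\leq\univ{x^{k-1}}/\cbound\leq\cbound^{\frac{q-2}{q-1}-1}\univ{x^k}\leq\univ{x^k}$. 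Without this step the claim can fail: a warm start $\regc_k$ just below $\univ{x^{k-1}}$ gives no a priori control of $\regc_k/\cbound$ relative to $\univ{x^k}$. Your "warm-start offset bounded by a universal constant" in the counting argument is fine (the paper counts $N_K=\sum_k(1+j_k)$ and uses $j_k=1+\log_{\cbound}(\regc_{k+1}/\regc_k)$, which telescopes to $2K+\log_{\cbound}(\regc_K/\regc_0)$, matching your bound), but the first conclusion of the lemma is only proved by you for iterations with $j_k\geq1$.
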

\vspace{-5mm}
\begin{theorem} \label{th:global_universal}
    Let function $f: \R^d \to \R$, be convex, Hölder continuous with some $\Lq<\infty$, with $\cbound$-bounded Hessian change (\Cref{def:cbound}), and the bound level sets with diameter $D<\infty$. \un{} (\Cref{alg:un}) converges with the rate $\min_{q\in [2,4]} \cO \lr \frac {\Lq D^q} {k^{q-1}} \rr,$
    \begin{align*}
        f(x^k)-\fopt 
        &\leq \min_{q\in [2,4]} \frac {9\Lq D^q \lr 4 \cbound^2 (q-1) \rr ^{q-1}}{k^{q-1}} \\
        & \hspace{1.5cm} + \normMd {\g(x^0)} {x^{0}} D \exp\lr - k/4\rr .
    \end{align*}
\end{theorem}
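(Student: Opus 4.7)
The plan is to combine three tools already in the excerpt: the one-step descent of Theorem \ref{th:holder_un}, the backtracking guarantees of Lemma \ref{le:universal}, and the rate-conversion Lemma \ref{le:one_step_to_rate}. The key structural observation is that the acceptance criterion used by \un{} is precisely the one-step inequality \eqref{eq:ip_un}, so once the accepted regularization parameter is controlled, the global rate follows essentially as in Theorem \ref{th:global}.

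First, I would fix an arbitrary $q \in [2,4]$ with $\Lq < \infty$. By Lemma \ref{le:universal}, the warm-started estimate satisfies $\regc_{k+1} \leq \univ{x^k}$, and the cumulative number of backtracking steps satisfies $N_k \leq 2k + \log_c(\univ{x^{k-1}}/\regc_0)$, i.e. only a constant number of backtracks per outer iteration on average. Consequently, the accepted $\alunk$ exceeds the threshold of Theorem \ref{th:holder_un} by at most a constant factor (coming from a single overshoot by the backtracking multiplier $c$), so the one-step decrease \eqref{eq:one_step_gen} holds at every iteration with a constant $c_5$ proportional to $(9\Lq)^{-1/(q-1)}$.

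Second, feeding this one-step inequality into Lemma \ref{le:one_step_to_rate} yields
\begin{equation*}
f(x^k) - \fopt \leq \frac{D^q (2\cbound(q-1))^{q-1}}{c_5^{q-1} k^{q-1}} + \normMd{\g(x^0)}{x^0}\, D \exp(-k/4).
\end{equation*}
Since this bound holds for every $q \in [2,4]$ with $\Lq < \infty$, taking the infimum over $q$ produces the universal rate claimed in the theorem.

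The main obstacle is bookkeeping the extra factor of $\cbound$ that distinguishes Theorem \ref{th:global_universal} (with $\cbound^2$) from Theorem \ref{th:global} (with $\cbound$). The mismatch arises because the warm-start $\regc_{k+1}$ for iteration $k+1$ is built from $\normMd{\g(x^k)}{x^k}$, whereas the one-step inequality needed at iteration $k+1$ is expressed in terms of $\normMd{\g(x^{k+1})}{x^{k+1}}$. Converting between these two norms through the $\cbound$-bounded Hessian change \eqref{def:cbound} inflates the effective $c_5^{-1}$ by one further factor of $\cbound$, which then compounds with the $\cbound$ already produced inside Lemma \ref{le:one_step_to_rate} and appears as $\cbound^2$ in the final estimate. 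Once this mismatch is absorbed into the constants, the remaining steps are a straightforward transcription of the proof of Theorem \ref{th:global}, followed by the infimum over $q$.
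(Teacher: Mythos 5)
Your overall route is the paper's: the acceptance test in \un{} is literally the inner-product inequality \eqref{eq:ip_un}, Lemma \ref{le:universal} (together with the rejection of the previous candidate) controls the accepted parameter, Lemma \ref{le:one_step_to_rate} converts the resulting one-step decrease into a rate, and the minimum over $q$ is taken at the end because the accepted parameter is controlled simultaneously for all $q$. The skeleton and the final bound are right.

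The one place your argument wobbles is the bookkeeping of the second factor of $\cbound$, and your explanation of it is not the correct mechanism. No conversion between $\normMd{\g(x^{k+1})}{x^k}$ and $\normMd{\g(x^{k+1})}{x^{k+1}}$ is needed to invoke Lemma \ref{le:one_step_to_rate}: its hypothesis \eqref{eq:one_step_to_rate_requirement}, like the acceptance test in \Cref{alg:un}, is stated entirely in the $x^k$-norm, and the cross-iterate norm change is already accounted for \emph{inside} that lemma (this is the first $\cbound$). The second $\cbound$ comes from the backtracking overshoot that you mentioned but then absorbed into an unspecified ``proportionality'': since the candidate $\alunkest{j_k-1}=\alunkest{j_k}/\cbound$ was rejected, the contrapositive of \Cref{th:holder_un} gives $\alunkest{j_k} < \cbound \inf_{q\in[2,4]} \lr 9\Lq\rr^{\frac 1{q-1}} \normM{\g(x^k)}{x^k}^{*\frac{q-2}{q-1}}$ (and the case $j_k=0$ is covered by \Cref{le:universal}, where the warm-start issue you raise is indeed handled, again costing only the same single factor $\cbound$). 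Combining this with the accepted test and convexity yields the one-step decrease with $\cc = \frac 1{2\cbound}\lr\frac 1{9\Lq}\rr^{\frac 1{q-1}}$, and plugging this $\cc$ into Lemma \ref{le:one_step_to_rate} produces $\lr 4\cbound^2(q-1)\rr^{q-1}$ directly. If you instead keep the overshoot only as a proportionality constant and then add a separate norm-conversion factor, you either drop the overshoot or double-count and would land at $\cbound^3$; tracking the overshoot explicitly, as above, closes this gap and the rest of your write-up goes through as in \Cref{th:global}.
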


\section{Global (super)linear convergence rate}
Stepsized Newton method is known to be able to achieve a global linear rate if the Hessian is bounded and stepsize is constant \citep{karimireddy2018global, gower2019rsn}, or when the function is $\Lsuppnu 21$-Hölder continuous with stepsize following schedule \aicn{} \citep[proof in \citep{hanzely2023sketch}]{hanzely2022damped}. 

In line with those results, we present global linear rates for algorithms \rn{}, \un{}, \grls{} on $\Lsup$-Hölder continuous functions with finite $(p+\nu)$-relative size characteristic \citep{doikov2024super}. The proof is in \Cref{sec:doikov_linear}.

\begin{definition}[\citep{doikov2024super}]\label{def:relsize}
For strictly convex function $f: \R^d \to \R$ we call \emph{$s$-relative size} characteristic 
\begin{align*}
    \ds &\eqdef \sup_{x,y \in \level} \lc \normM {x-y}x \lr \frac {\brmax}{\br xy} \rr^{\frac 1s} \rc,
\end{align*}
where $\br xy \eqdef \la \nabla f(x)-\nabla f(y) , x-y \ra>0$ and $\brmax \eqdef \sup_{x,y \in \level} \br xy$.
    \end{definition}
\vspace{-5mm}
\begin{theorem}\label{th:doikov_linear_from_main}
    Let function $f$ be $\Lsup$-Hölder continuous, with finite relative size $D_q<\infty$ for $q=p+\nu$ (\Cref{def:relsize}) and $\cbound$-bounded Hessian change (\Cref{def:cbound}).
    Algorithms \rn{}, \un{} and \grls{} find points in the $\varepsilon$-neighborhood, $f(x^k)-f(x^*) \leq \varepsilon$, in
    \begin{align*}
        k
        &\leq  \cO \lr \cbound \lr \frac {\Lq\dss q ^q}{\brmax} \rr^{\frac 1{q-1}} \ln \frac {f_0} \varepsilon 
        + \ln \frac {\normMd {\g(x^0)} {x^{0}} D} {\varepsilon} \rr
        \hspace{-4.5mm}
    \end{align*}
    iterations, implying a global linear convergence rate.
\end{theorem}
\begin{remark}
    In view of \eqref{eq:greedy_grls_approx}, analogous convergence guarantee (with a worse constant) can be proven for \greedy{}.
\end{remark}
Replacing relative size assumption with uniform star-convexity of degree $s$ ($q>s \geq 2$), we can guarantee a global superlinear rate for \rn{} and \greedy{} similarly to \citet{kamzolov2024optami}.
The proof is in \Cref{sec:glob_superlinear}. 
\begin{definition} \label{def:star-convexity}
    For $s\geq2$ and $\mu_s\geq 0$ we call function $f:\R^d \to \R$ \emph{$\mu_s$-uniformly star-convex} of degree $s$ in local norms with respect to a minimizer $\xopt$ if $\forall x \in \R^d, \forall \eta \in [0,1]$ holds
    \begin{align*}
        f\lr \eta x + (1-\eta) \xopt \rr  
        &\leq \eta f(x) + (1-\eta) \fopt \\
        & \qquad - \frac {\eta (1-\eta) \mu_s}s \normM {x-\xopt} x ^s.
    \end{align*}
    If this inequality holds for $\mu_s=0$, we call function $f$ \emph{star-convex in local norms} (w.r.t. minimizer $\xopt$).
\end{definition}

\begin{theorem}\label{th:optami_superlinear}
    Let the function $f:\R^d \to R$ be $\Lsup$-Hölder continuous (\Cref{def:holder}) and $\mu_s$-uniformly star-convex of degree $s$ in local norms (\Cref{def:star-convexity}) and $q\eqdef p+\nu\geq s \geq 2$ then \rn{} and \greedy{} have global decrease in functional value suboptimality,
    \begin{align*}
    f(x^{k})-\fopt &
    \leq \lr f(x^0)-\fopt\rr \prod_{t=0}^{k-1} (1-\hat \eta_t),
\end{align*}
where $\hat \eta_k \in[0,1]$ is the only positive root of $E_k(\eta) \eqdef (1-\eta) \frac {\mu_s}s - \eta^{q-1} \lr \frac {\Lq} {(p+1)!} + \frac {\regc} {q} \rr \normM {x^k - \xopt}{x^k} ^{q-s}$. 

If $q=s$, then $\hat \eta_k$ is constant throughout all iterations and the rate is \textbf{globally linear}. 

If $q>s,$ then $\hat \eta_k$ is monotonically increasing as $\normM {x^k - \xopt}{x^k}$ decreases, $1-\hat \eta_k \to 0$, and therefore, the resutling rate is \textbf{globally superlinear}. 
\end{theorem}

\section{Numerical experiments} \label{sec:experiments}
\subsection*{Logistic regression}

In \Cref{fig:logistic}, we compare the performance of the proposed algorithms on binary classification on datasets from \libsvm{} repository \citep{Chang2011LIBSVM}. For datapoints ${\{(a_i,b_i)\}}_{i=1}^n,$ where $a_i \in \R^d, b_i \in \{-1, +1\}$, and regularizer $\mu = 10^{-3}$, we aim to minimize
\begin{align*}\label{eq:logistic}
    \min_{x \in \R^d }\Big\{ 
    f(x) &= 
    \frac{1}{n}\sum_{i=1}^n \log \lr 1 + e^{-b_i \langle a_i, x\rangle} \rr + \frac{\mu}{2} \|x\|_2^2\Big\}.
\end{align*}
We initialize all methods at $x_0 = 10 \cdot [ 1, 1, \dots , 1 ]^T \in \R^d.$

\subsection*{Polytope feasibility}
In \Cref{fig:polytope}, we compare proposed algorithms on \textit{polytope feasibility} problem, aiming to find a point from a polytope $\mathcal{P} = \Big\{ x \in \R^d: \langle a_i, x \rangle \leq b_i, \; 1 \leq i \leq n \Big\} $, reformulated as
\begin{equation}\label{eq:polytope}
    \min_{x \in \R^d }\Big\{ f(x) = \sum_{i=0}^{n} (\langle a_i, x \rangle - b_i)^p_+ \Big\},
\end{equation}
where $(t)_+ \eqdef \max\{t, 0\}$ and $p \geq 2$. 
We generate data points $(a_i, b_i)$ and the solution $x^*$ synthetically as $a_i, x^* \sim \mathcal{N}(0,\,1)$ and set $b_i = \la a_i, x^* \ra$.

We initialize all methods at $x_0 = [ 1, 1, \dots , 1 ]^T \in \R^d$.

\subsection*{Rosenbrock function}
Linesearch procedures solve the abovementioned problems in just a few steps. 
For a more challenging task, \Cref{fig:rosenbrock} presents the notorious $d$-dimensional \textit{Rosenbrock} function,
\begin{equation}\label{eq:rosenbrock}
    \min_{x \in \R^d }\Big\{ f(x) = \sum_{i=0}^{d-1} [100 (x_{i+1} - x_i^2)^2 + (1 - x_i)^2]\Big\}.
\end{equation}
Notably, the Rosenbrock function \eqref{eq:rosenbrock} is nonconvex, which breaks assumptions in our convergence theorems.

The function \eqref{eq:rosenbrock} has the global solution at $x^*=[1, \dots, 1]^T$, and therefore we choose the initial point from a normal distribution, $x^0 \sim \mathcal{N}(0, I_d) \cdot 20$.

\subsection{Experimental comparison}
In Figures \ref{fig:logreg-rn}, \ref{fig:poly-rn}, we compare higher-order methods \textit{without} any linesearch procedures, namely \rn{} (\Cref{alg:rn}), \aicn{} \citep{hanzely2022damped} and Gradient Regularization of Newton Method (\grn{}) \citep[Alg. 1]{doikov2024super}. 
As additional baselines, we use the damped Newton method with a fixed fine-tuned stepsize and classical first-order Gradient Method (\gm{}) \citep{10.5555/3317111}.
\rn{}  and \aicn{} show similar performance while \grn{} has a slight disadvantage. As expected, the first-order method \gm{} that does not utilize Hessian has quicker iterations but slower per-iteration convergence.

In Figures \ref{fig:logreg-univ}, \ref{fig:poly-univ}, we compare higher-order regularization methods \textit{with} smoothness constant estimation procedures, \un{} and Super-universal Newton method \citep[Alg. 2]{doikov2024super}.
As an additional baseline, we use the damped Newton method with a fixed but fine-tuned stepsize.
We show that \un{} displays faster convergence than the Super-universal Newton method. Moreover, we show that the exponent of the regularization term $\expo$ that appears in both \un{} and super-universal Newton method \eqref{eq:intro_reg_doikov} does not have a significant impact on overall performance.

Figures \ref{fig:logreg-linesearch}, \ref{fig:poly-linesearch}, \ref{fig:rosenbrock} compare implicit linesearch procedures for Newton stepsizes, namely \grls{}, Armijo stepsize, and Greedy Newton stepsize (\greedy{}) \citep{cauchy1847methode, shea2024greedy}. 
Our theory presents convergence guarantees for \grls{} and \greedy{} with stepsizes limited to the interval $ [0,1]$. 
We go beyond this limitation and perform parameter linesearches over $\alpha \in \R_+$ instead.

Figures \ref{fig:logreg-linesearch}, \ref{fig:poly-linesearch} demonstrate that on logistic regression and polytope feasibility problems, linesearch procedures \grls{} and \greedy{} use almost indistinguishable stespsizes and converge faster than Armijo linesearch and fixed stepsize Newton. 
On the Rosenbrock function (\Cref{fig:rosenbrock}), \grls{} significantly outperforms all other linesearches procedures. 

\newcommand{\figsizeap}{\textwidth}
\newcommand{\figsize}{\textwidth}
\newcommand{\sfigsize}{0.49\textwidth}

\begin{figure}[h]
    \centering
\includegraphics[width=\linewidth]{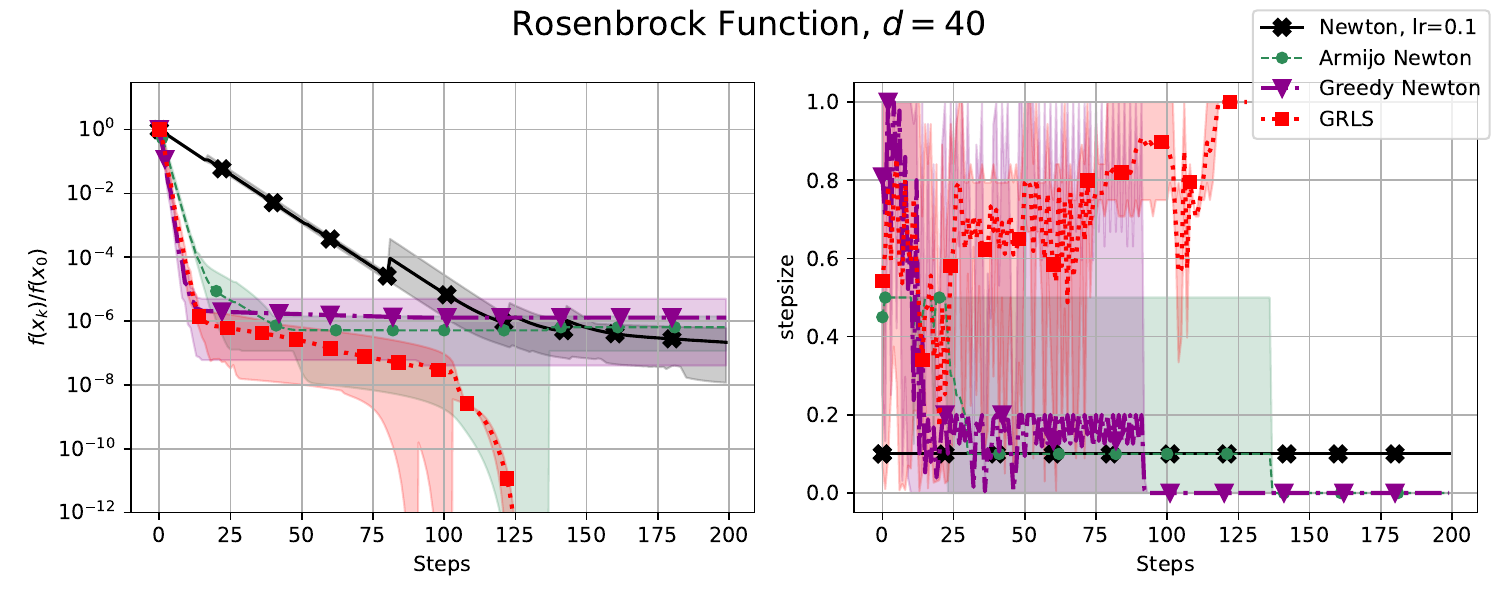}
    \caption{Performance of Newton method stepsize lineserch procedures on nonconvex \textbf{Rosenbrock function} \eqref{eq:rosenbrock}. 
    We plot mean $\pm$ standard deviation of $5$ random initializations. We crop stepsize standard deviation at $0$.}
    \label{fig:rosenbrock}
    \vspace{-5mm}
\end{figure}

\begin{figure*}
    \centering
    \begin{subfigure}{\figsize}
    \centering
        \includegraphics[width=\sfigsize]{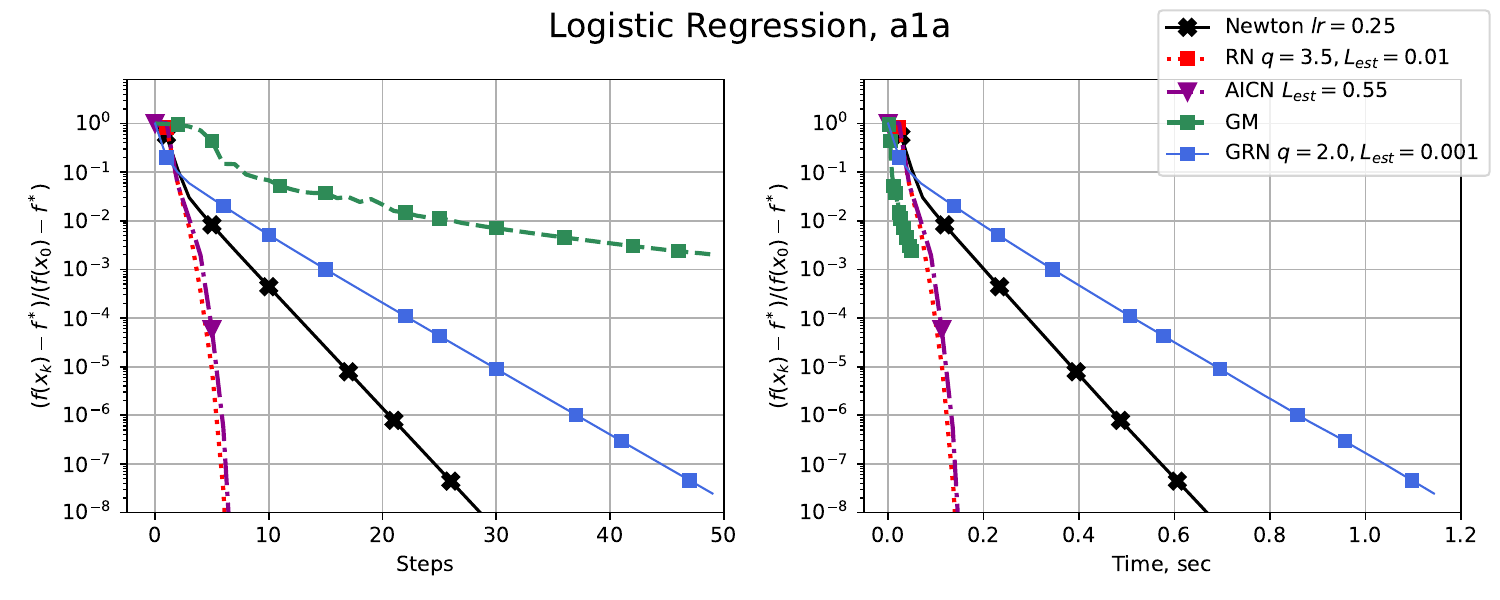}
        \hfill
        \includegraphics[width=\sfigsize]{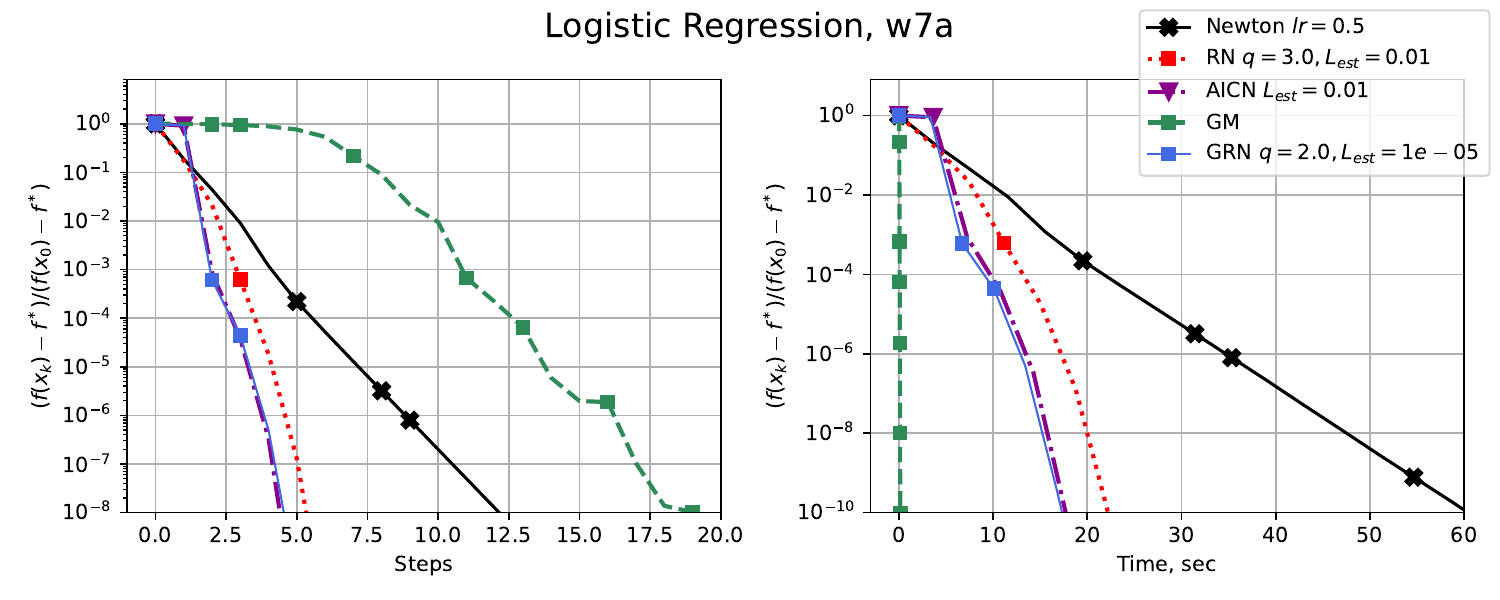}
        
        \includegraphics[width=\sfigsize]{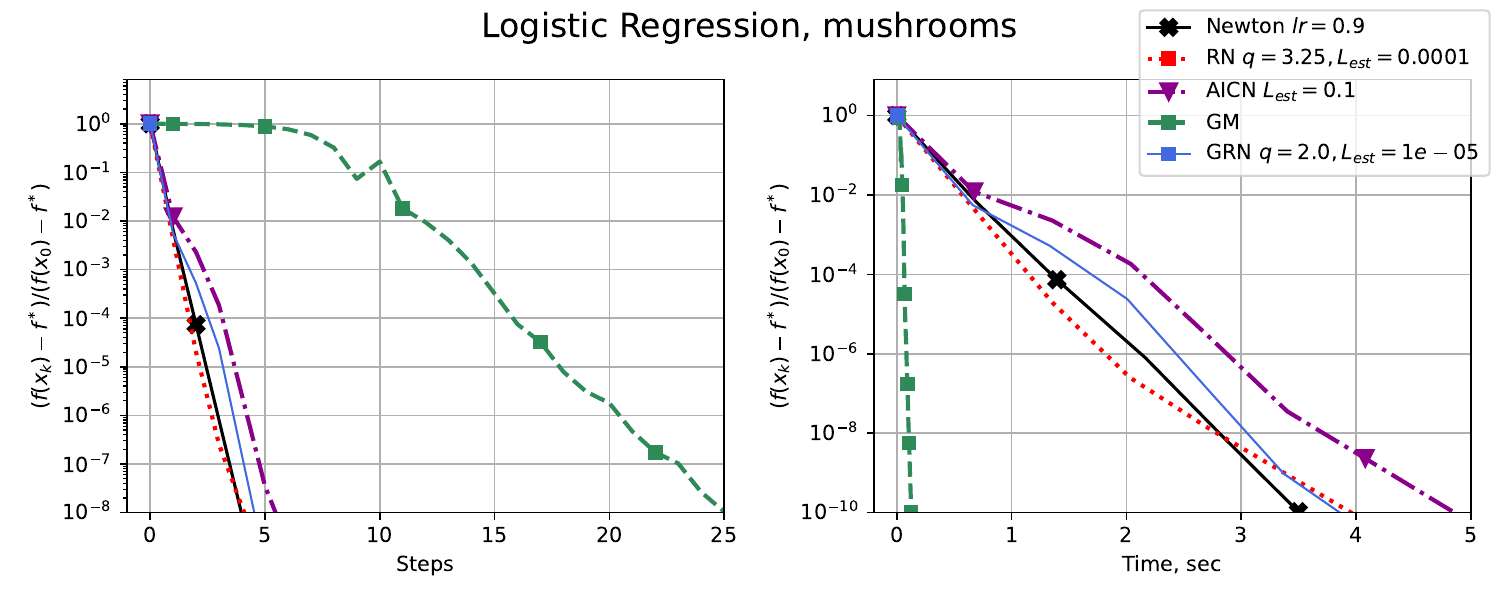}
        \hfill
        \includegraphics[width=\sfigsize]{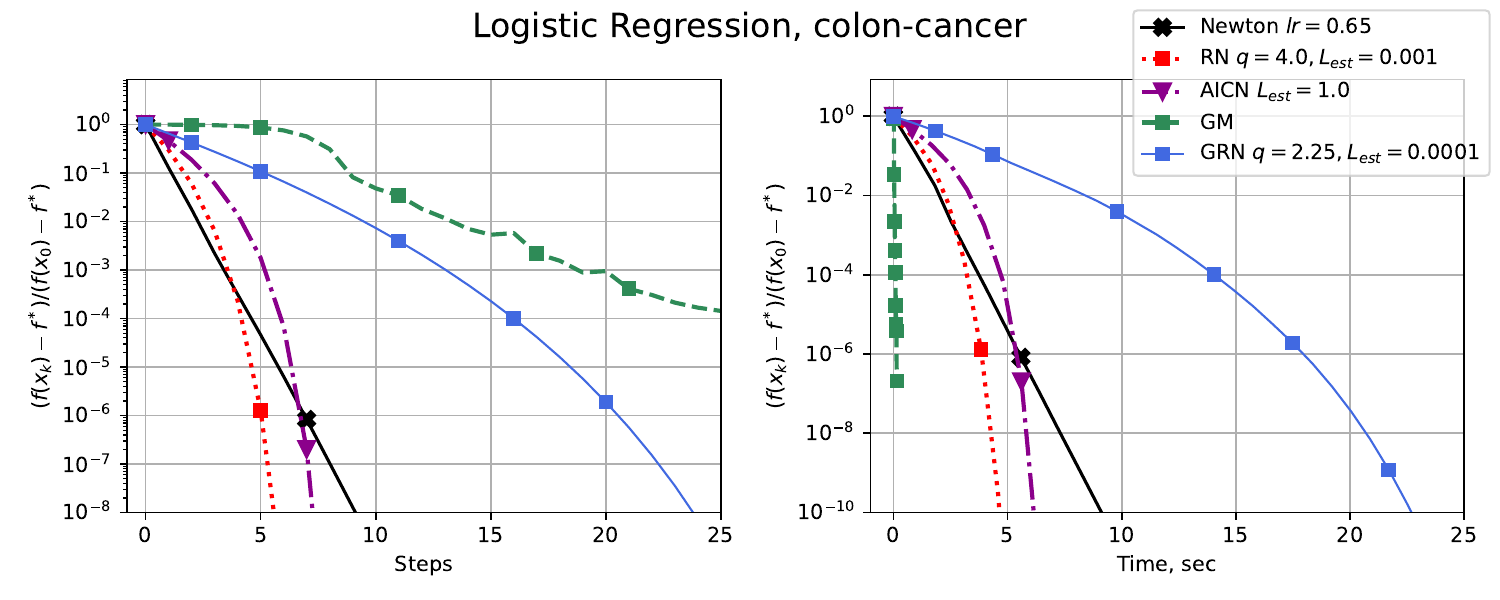}
    \caption{Performance of \rn{} compared to other higher-order methods \textit{without} any linesearch procedure.}
    \label{fig:logreg-rn}
    \end{subfigure}
    
    \begin{subfigure}{\figsize}
        \includegraphics[width=\sfigsize]{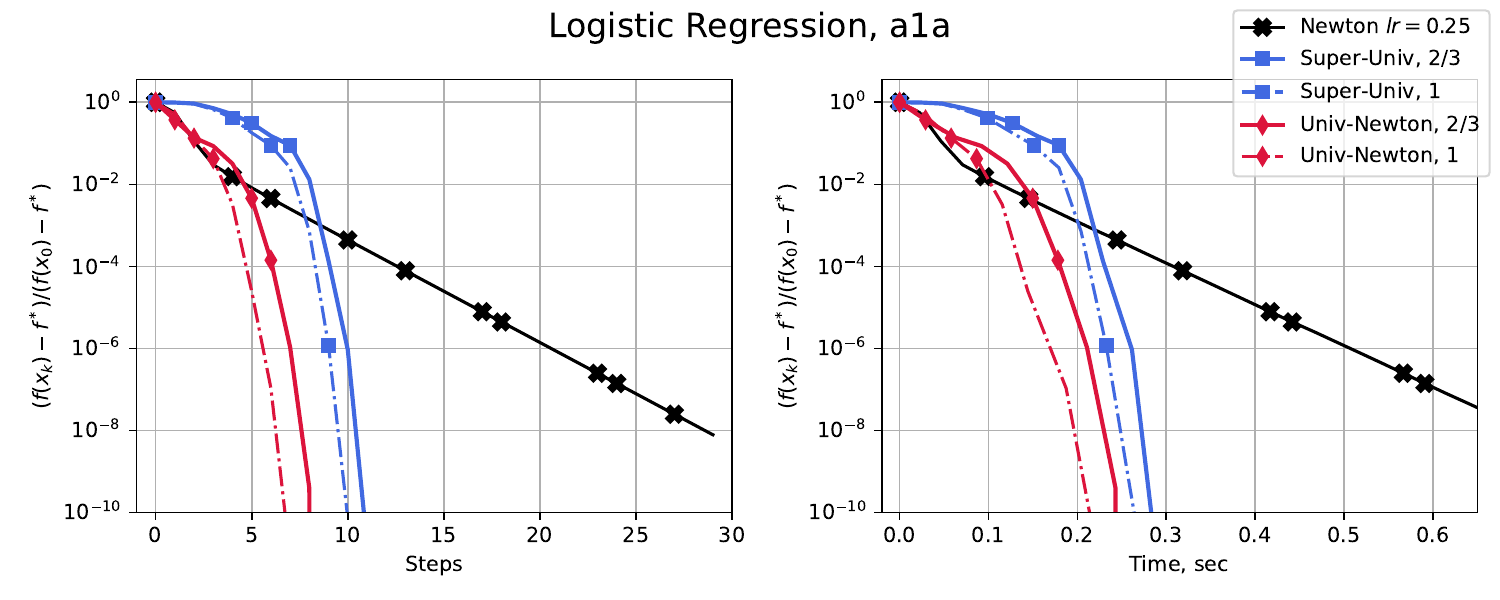}
        \hfill
        \includegraphics[width=\sfigsize]{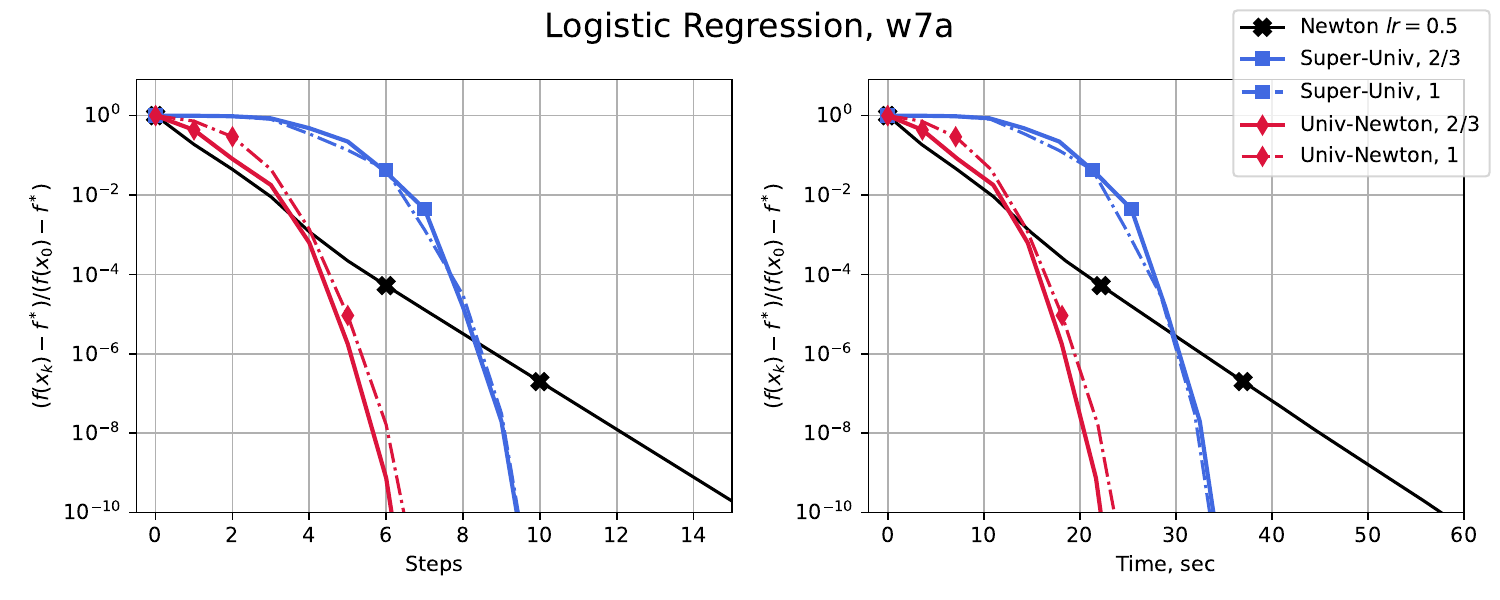}
        
        \includegraphics[width=\sfigsize]{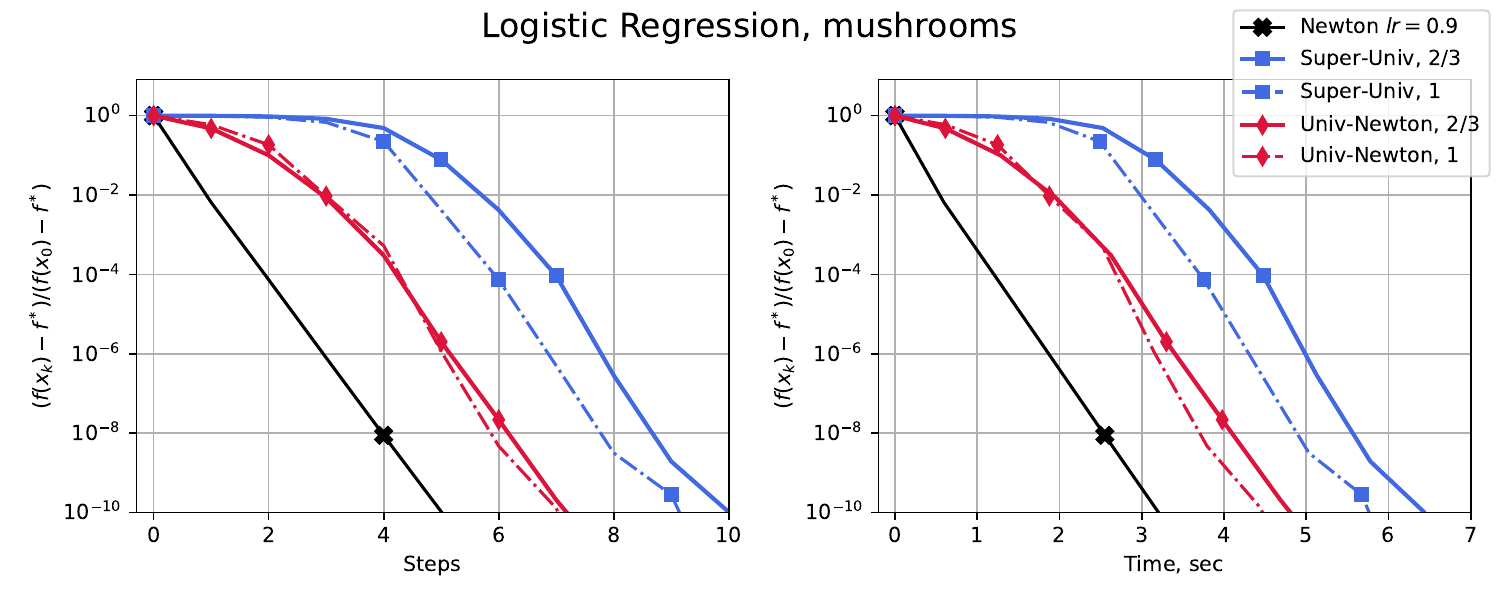}
        \hfill
        \includegraphics[width=\sfigsize]{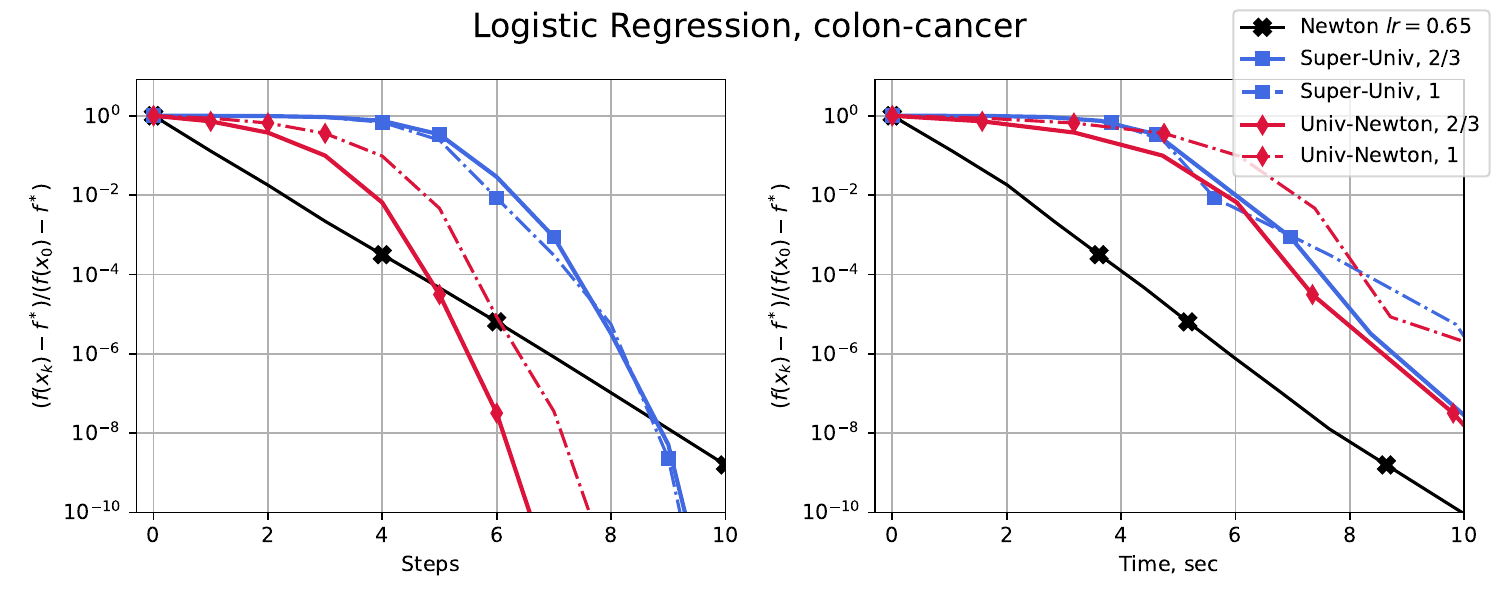}
    \caption{Performance of \un{}  compared to other higher-order regularization methods \textit{with} smoothness estimation procedures.}
    \label{fig:logreg-univ}
    \end{subfigure}

    \begin{subfigure}{\figsize}
    \centering
        \includegraphics[width=\sfigsize]{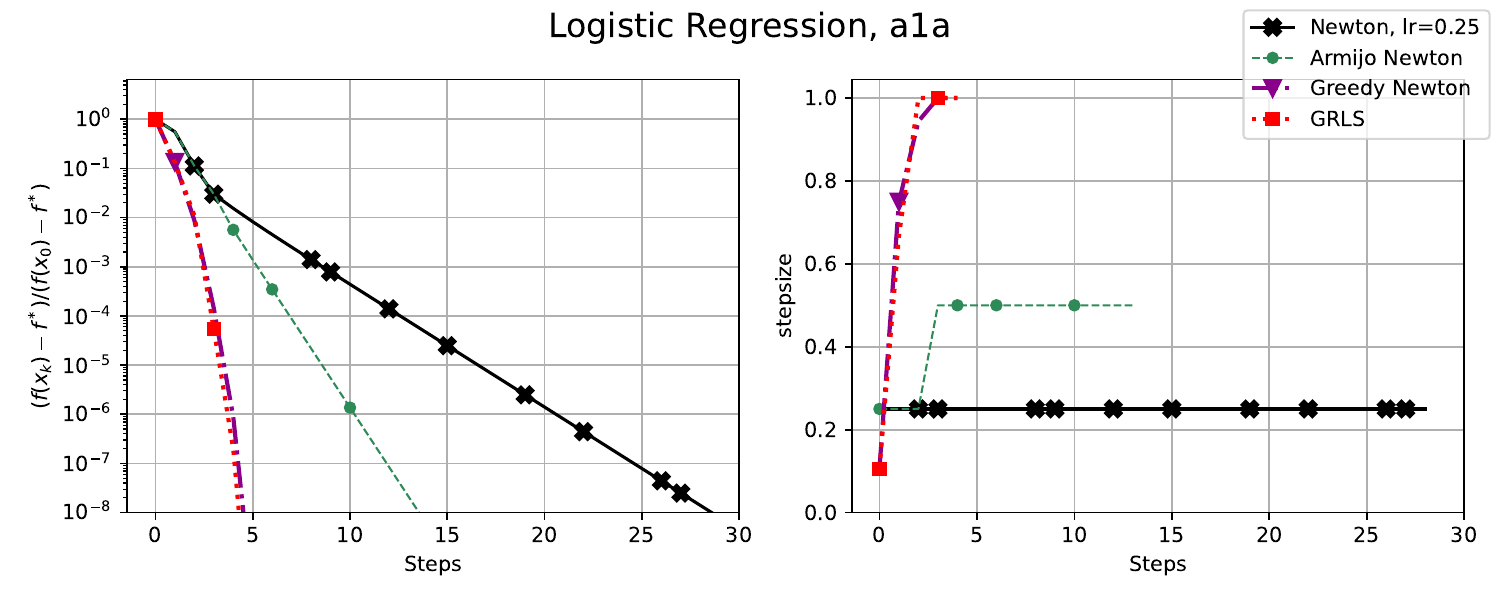}
        \hfill
        \includegraphics[width=\sfigsize]{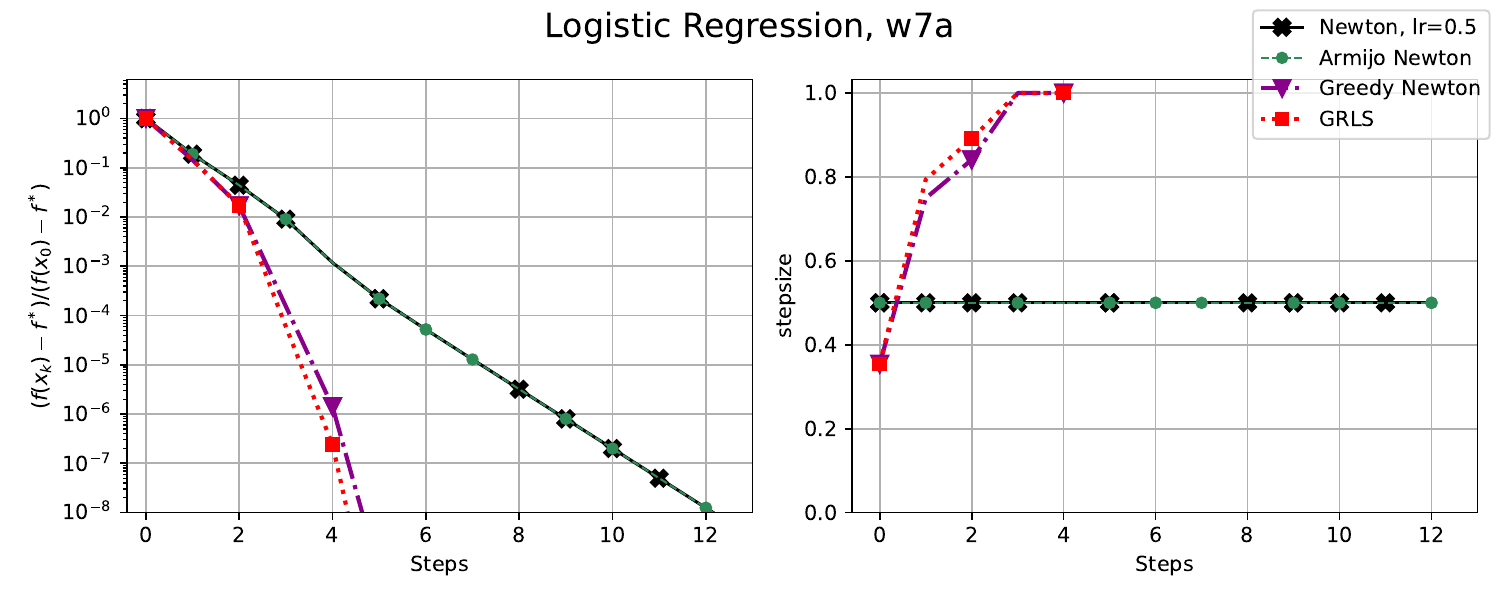}
        
        \includegraphics[width=\sfigsize]{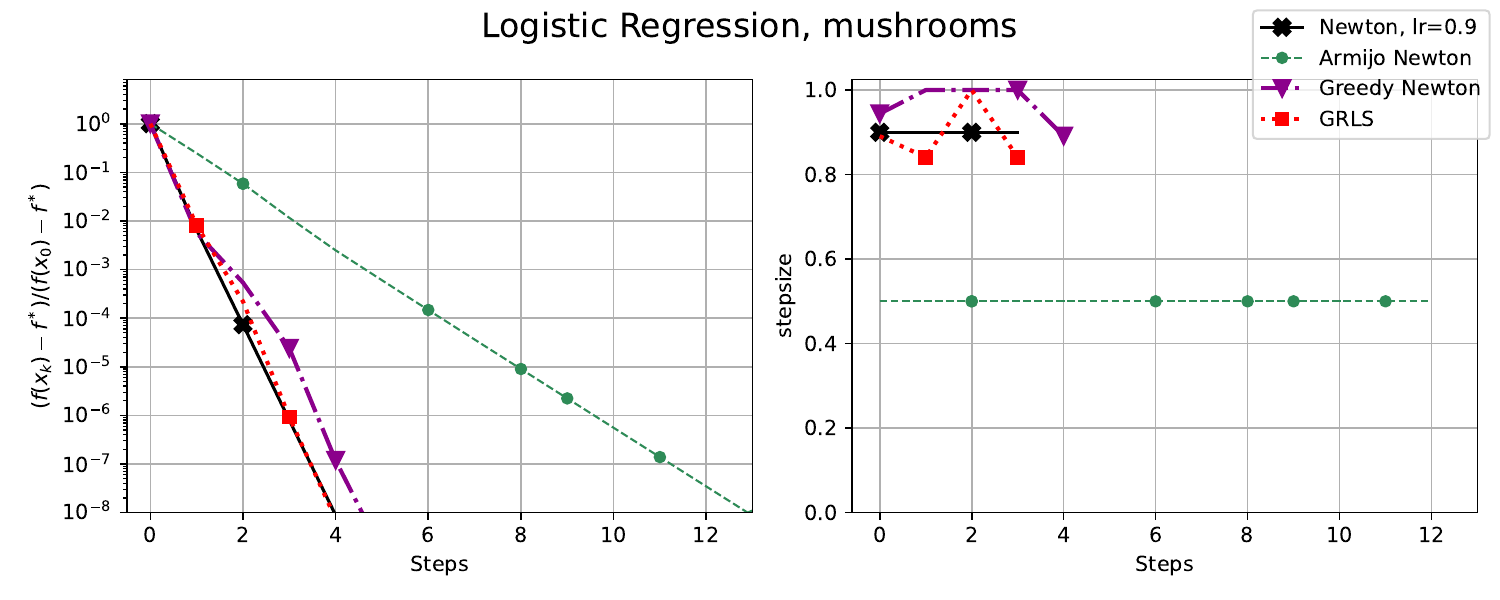}
        \hfill
        \includegraphics[width=\sfigsize]{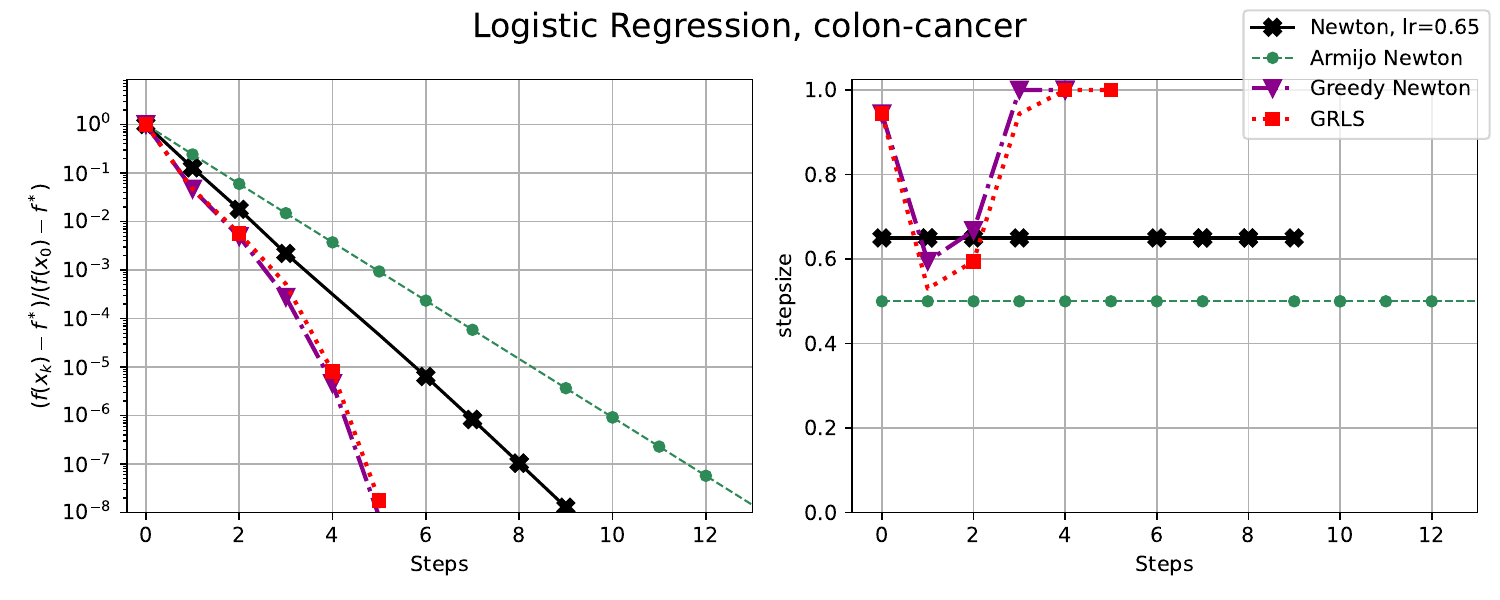}
    \caption{Performance of Linesearch \grls{} \eqref{eq:grls}  compared to other linesearch procedures.}
    \label{fig:logreg-linesearch}
    \end{subfigure}
    \caption{Binary classification \textbf{logistic regression} problem on \libsvm{} datasets.}
    \label{fig:logistic}
\end{figure*}

\begin{figure*}
\centering
    \begin{subfigure}{\figsize}
        \centering
        \includegraphics[width=\sfigsize]{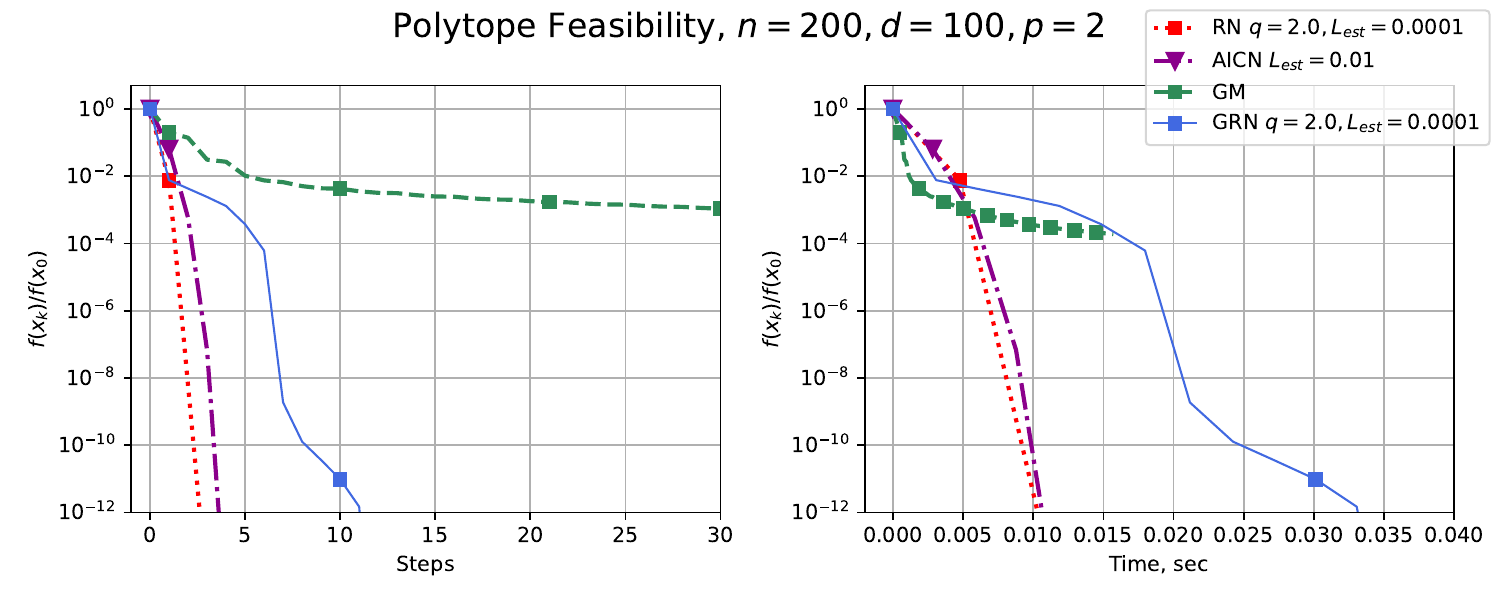}
        \hfill
        \includegraphics[width=\sfigsize]{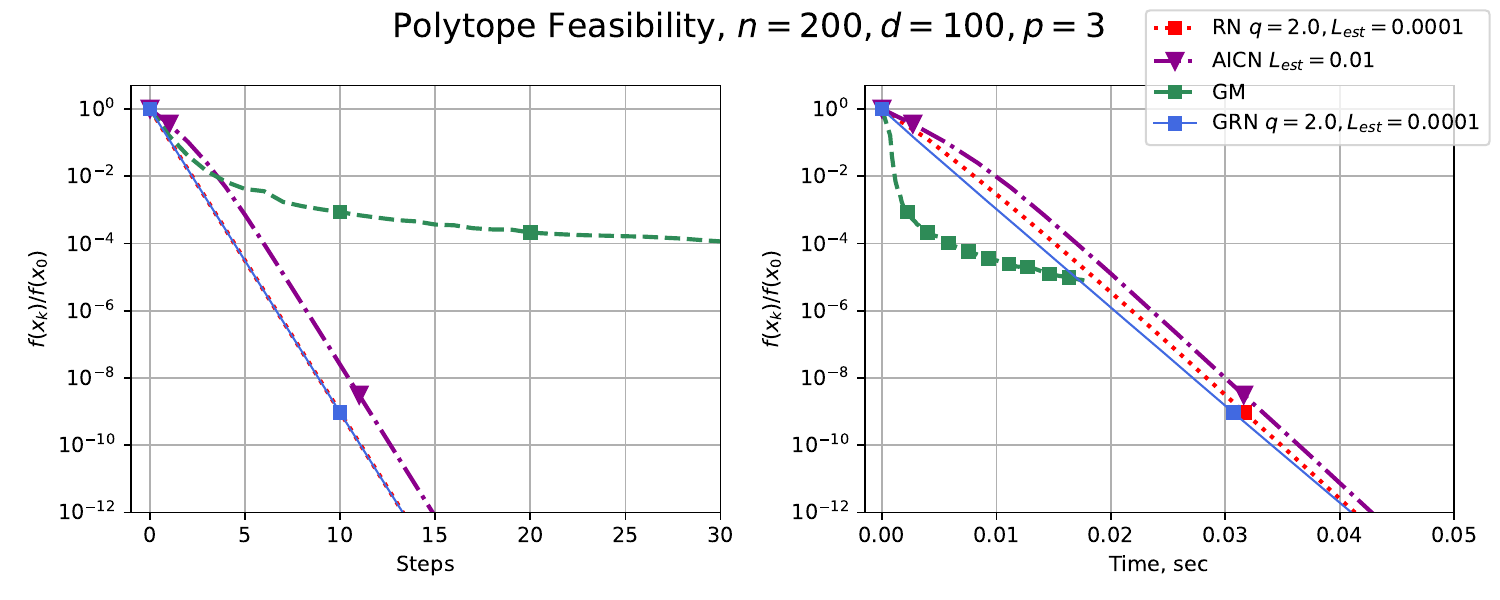}
        
        \includegraphics[width=\sfigsize]{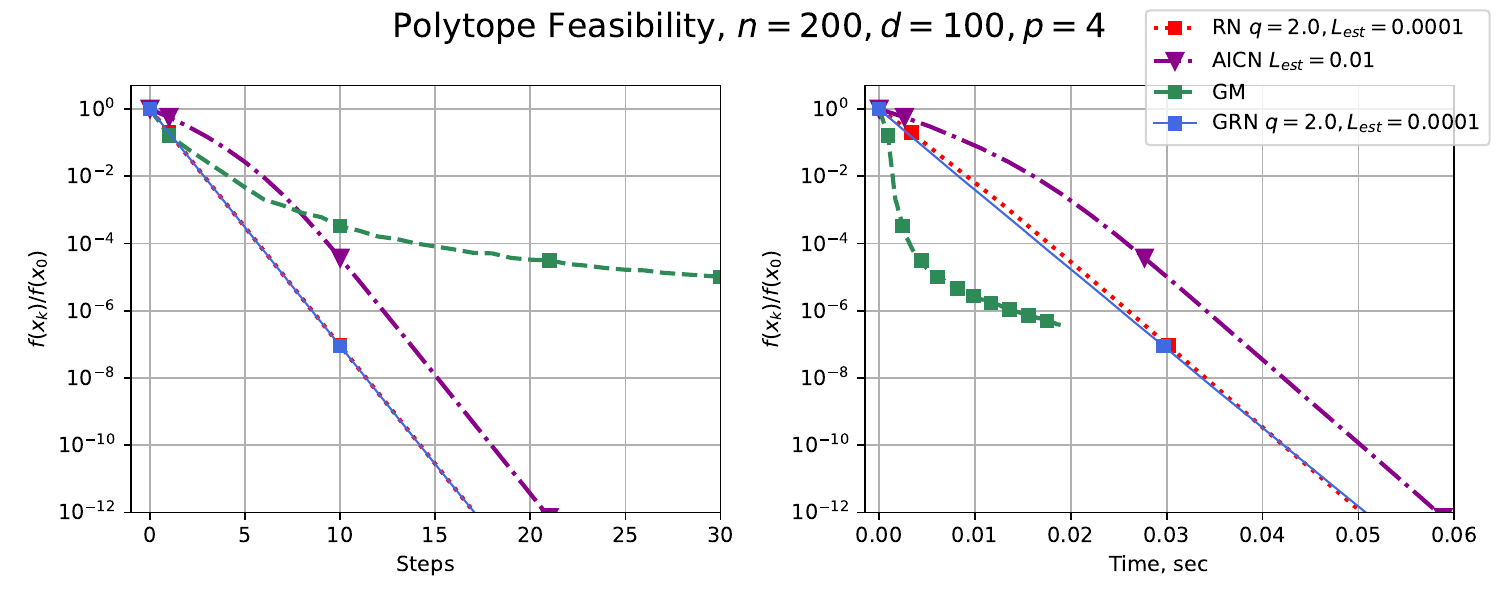}
        \hfill
        \includegraphics[width=\sfigsize]{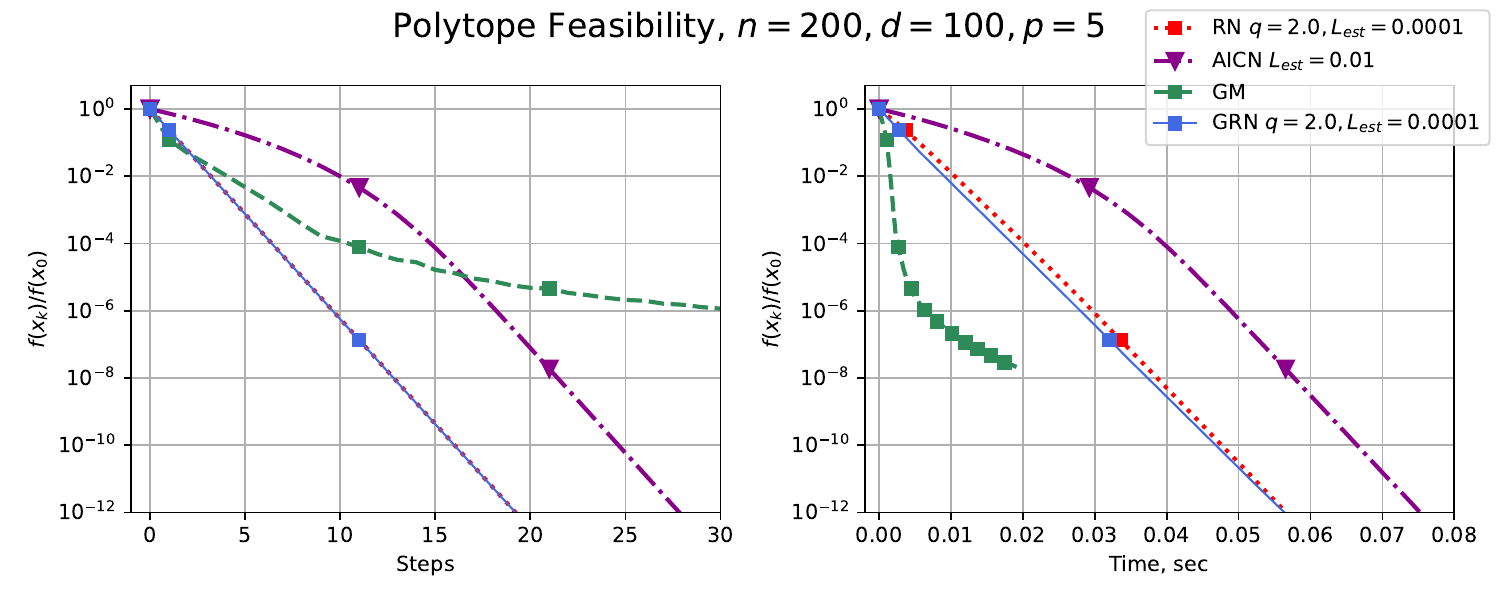}
        \caption{Performance of \rn{}  compared to other higher-order methods \textit{without} any linesearch procedure.}
        \label{fig:poly-rn}
    \end{subfigure}
    
    \begin{subfigure}{\figsize}
        \centering
        \includegraphics[width=\sfigsize]{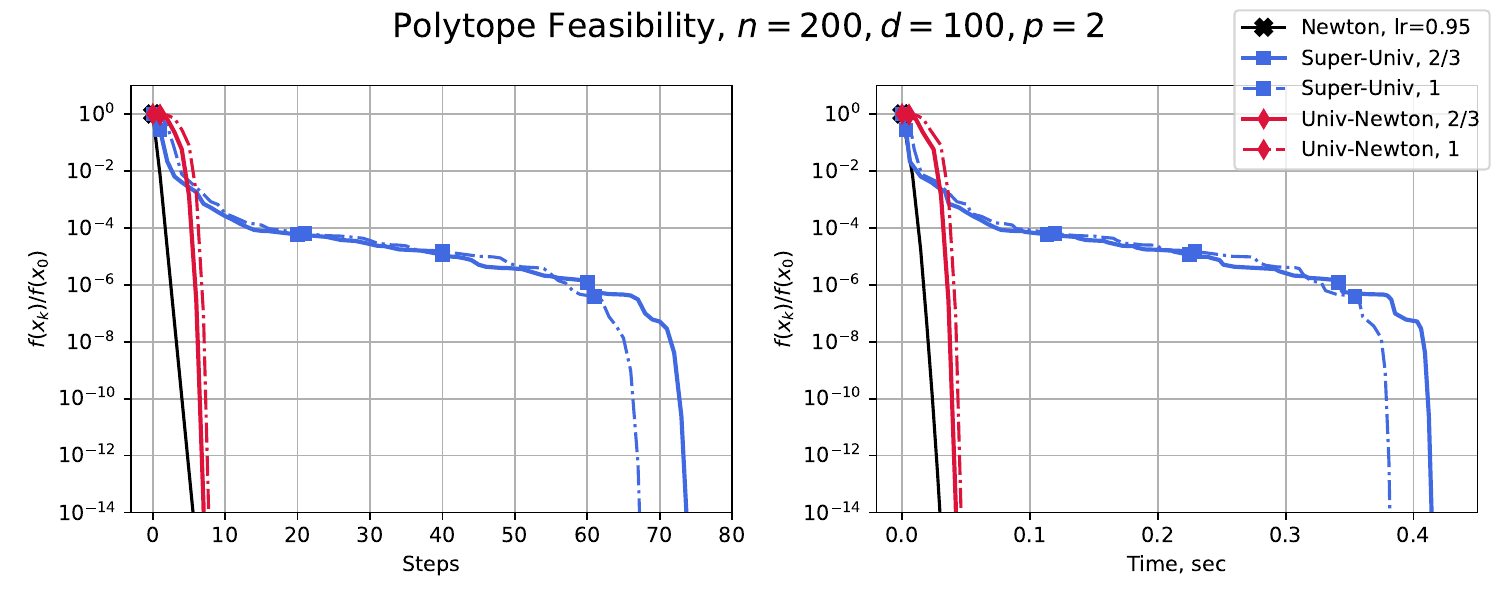}
        \hfill
        \includegraphics[width=\sfigsize]{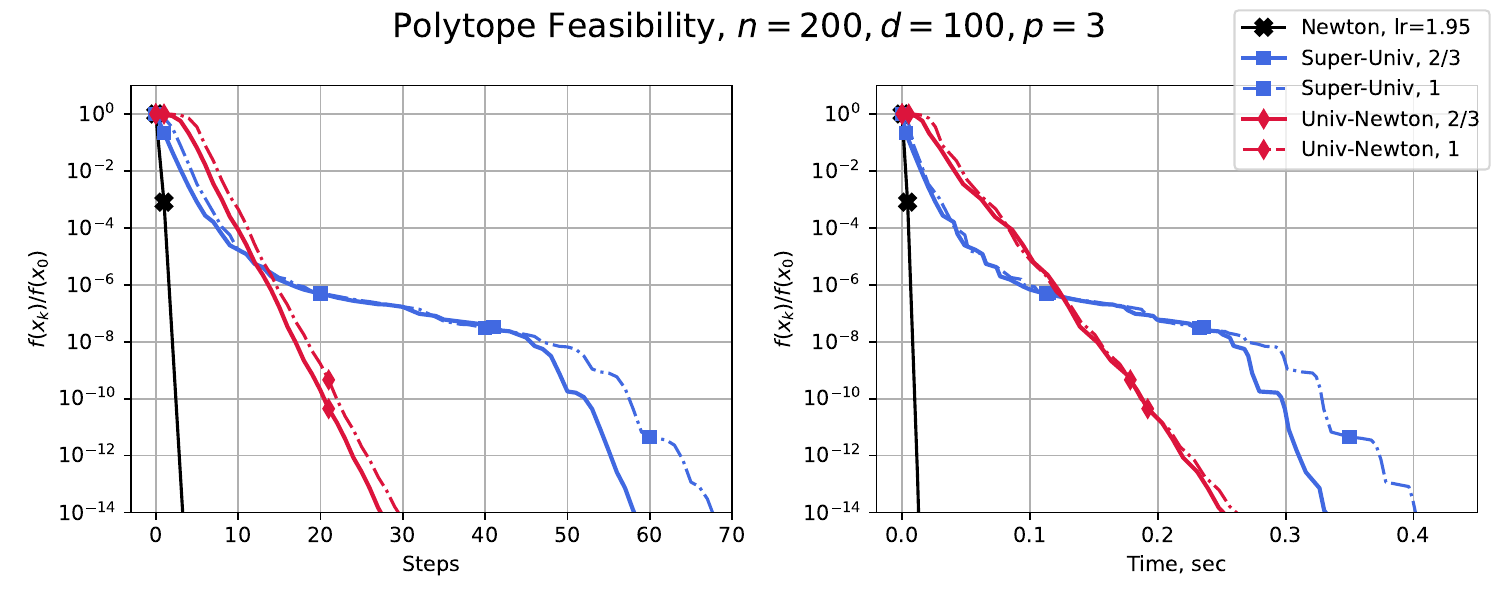}
        
        \includegraphics[width=\sfigsize]{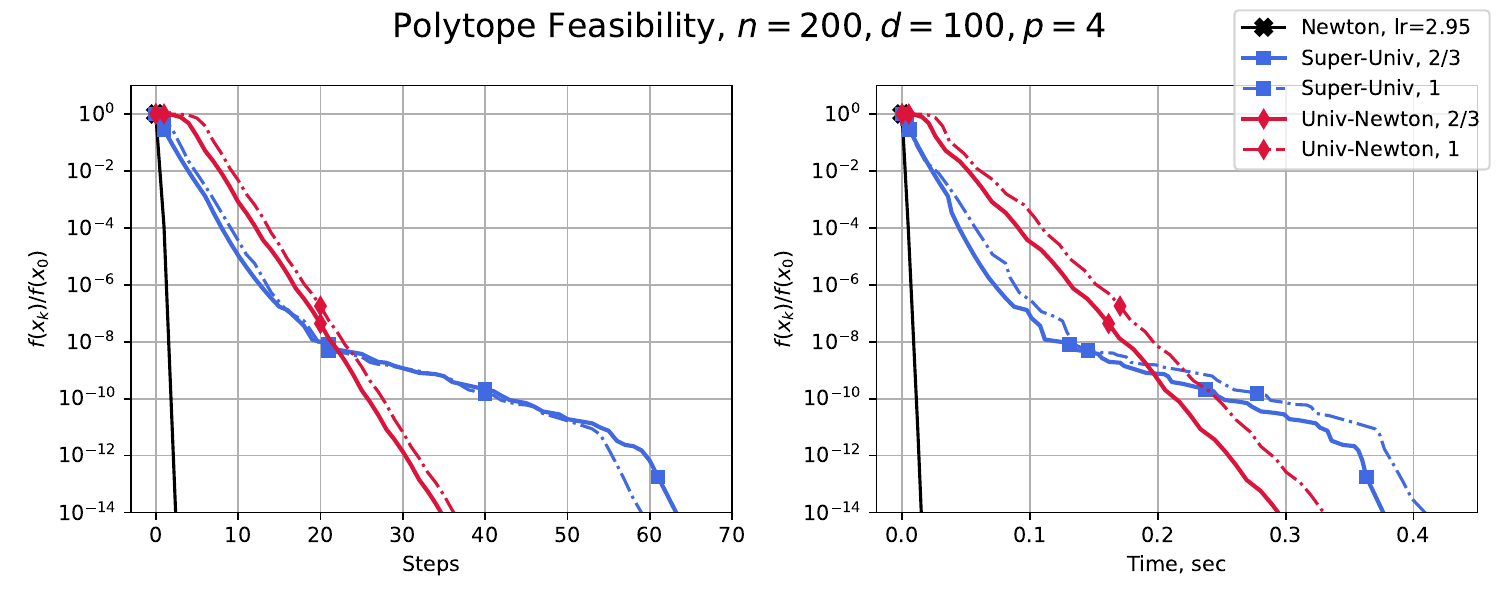}
        \hfill
        \includegraphics[width=\sfigsize]{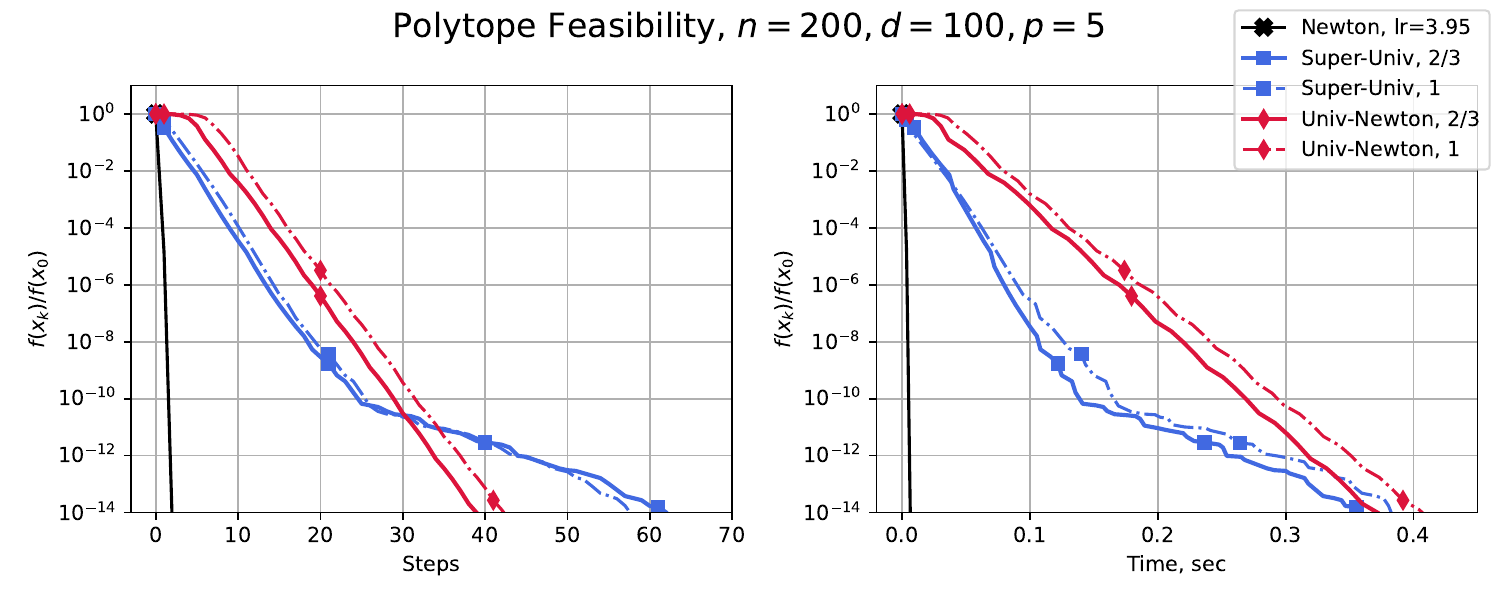}
        \caption{Performance of \un{}  compared to other higher-order regularization methods \textit{with} smoothness estimation procedures.}
        \label{fig:poly-univ}
    \end{subfigure}

    \begin{subfigure}{\figsize}
        \includegraphics[width=\sfigsize]{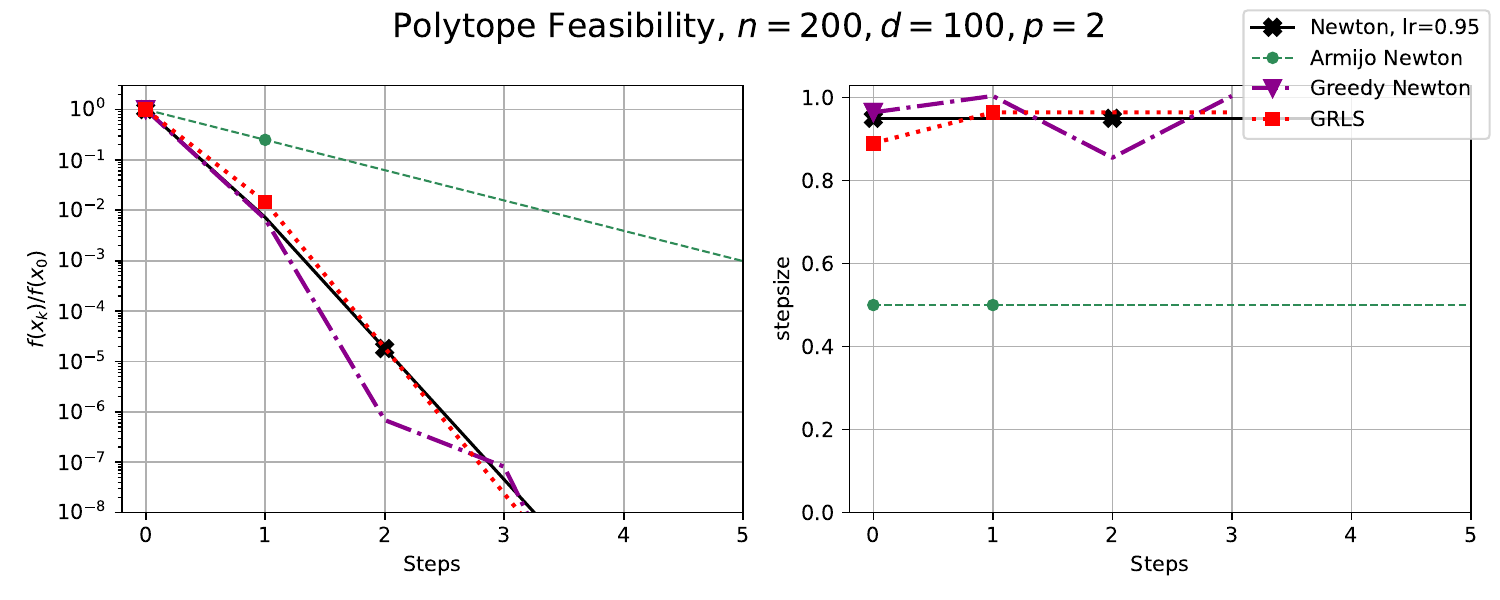}
        \hfill
        \includegraphics[width=\sfigsize]{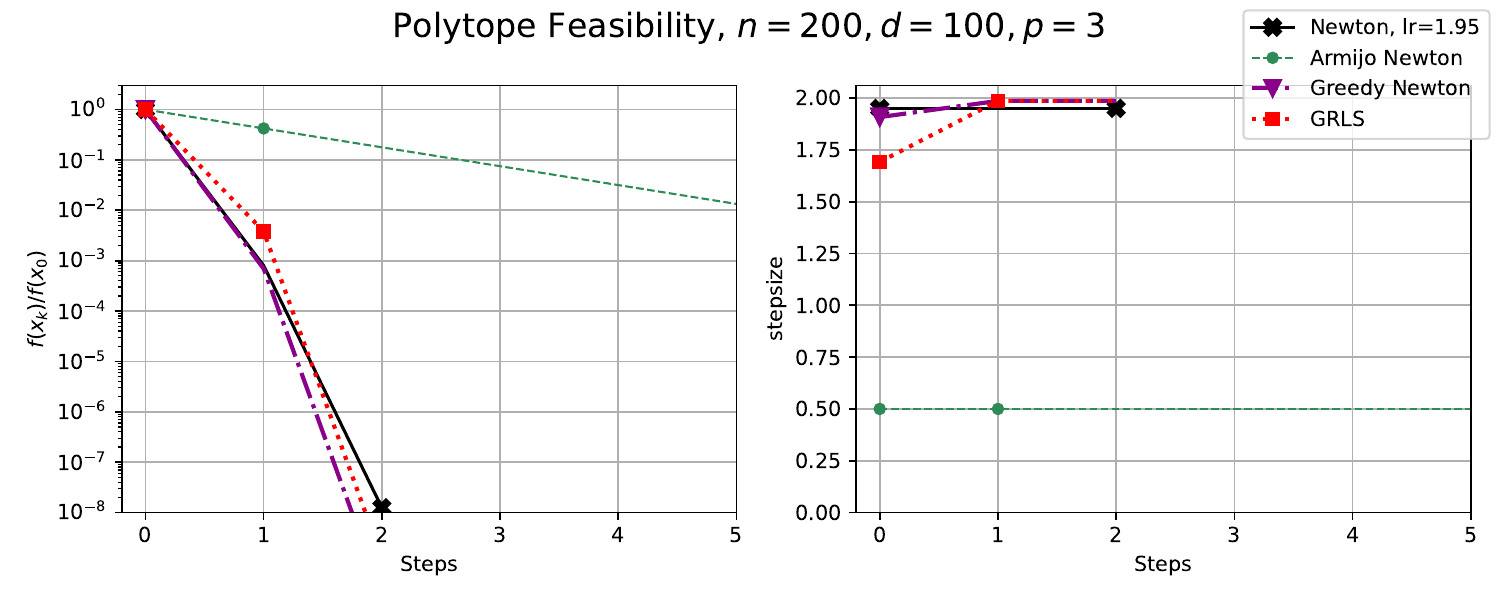}
        
        \includegraphics[width=\sfigsize]{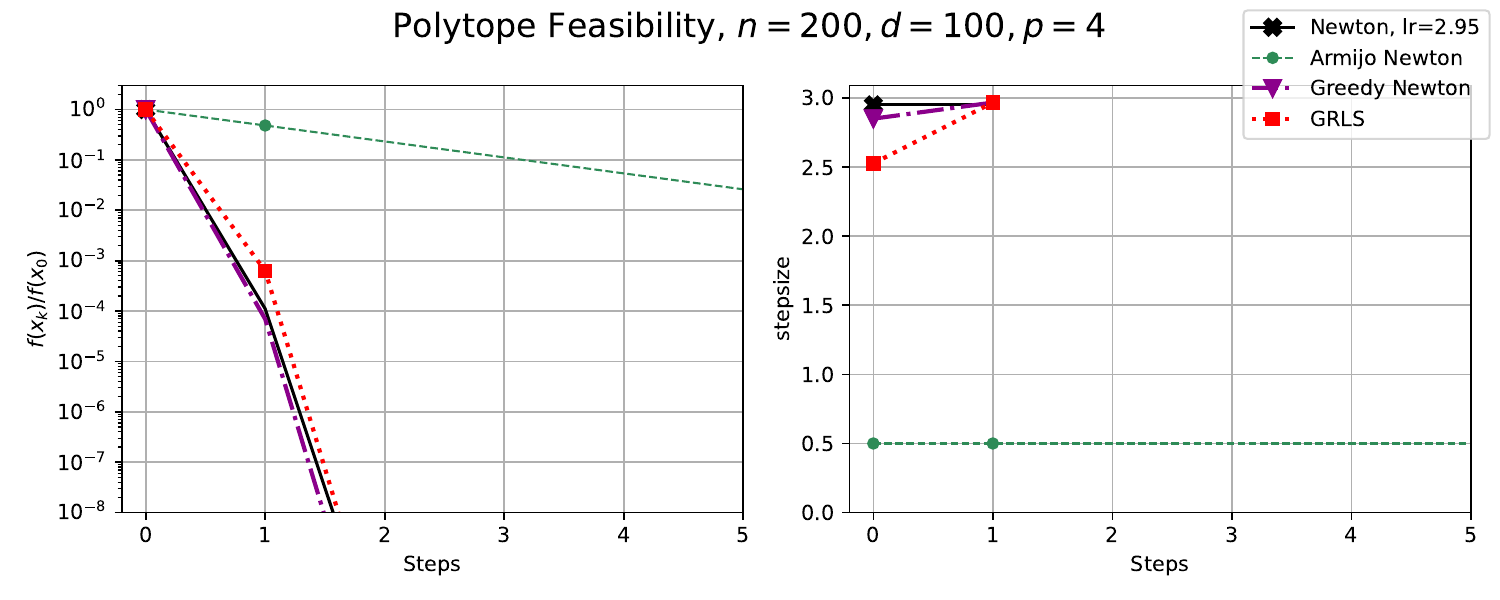}
        \hfill
        \includegraphics[width=\sfigsize]{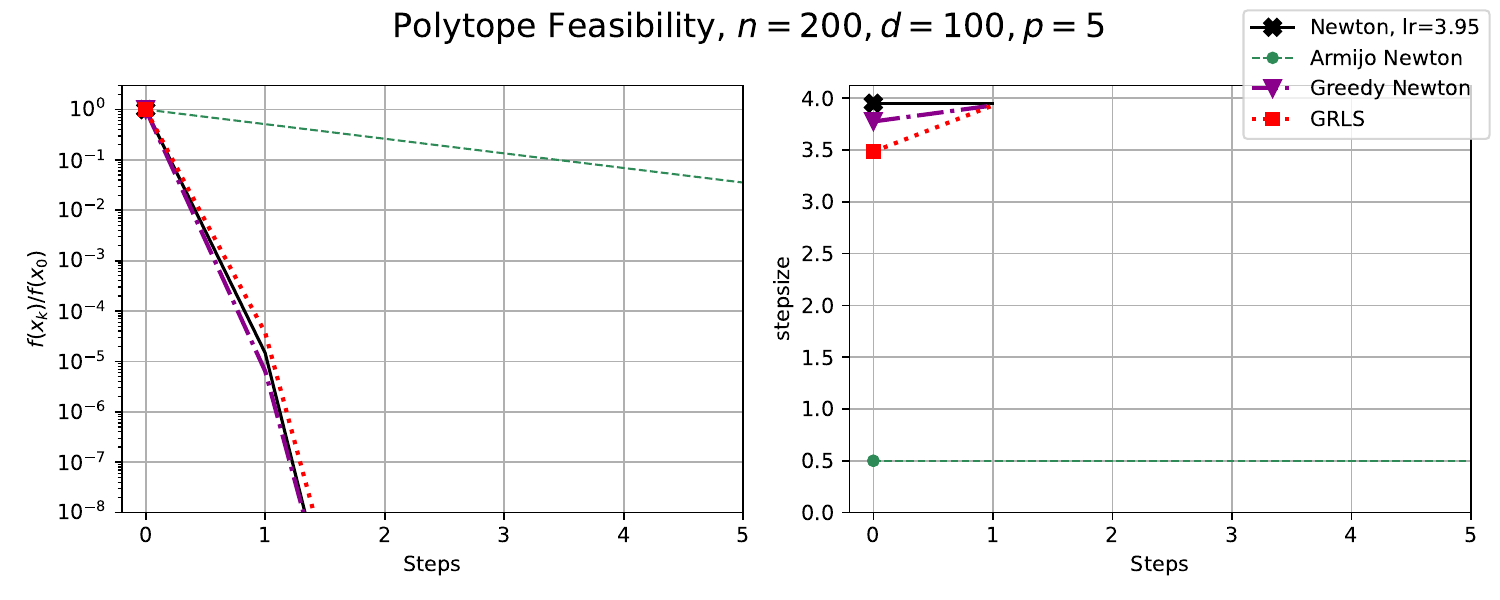}
    \caption{Performance of Linesearch \grls{} \eqref{eq:grls}  compared to other linesearch procedures.}
    \label{fig:poly-linesearch}
    \end{subfigure}
    \caption{\textbf{Polytope feasibility} problem \eqref{eq:polytope} on a synthetic datasets.}
    \label{fig:polytope}
\end{figure*}

    

\onecolumn
\bibliographystyle{plainnat}
\bibliography{UN}

\part*{Appendix} \label{p:appendix}
\appendix

\section{Technical details of experiments}

All hyperparameters were fine-tuned to achieve the best possible performance for both objectives and every dataset. All experiments were conducted on a workstation with specifications: AMD EPYC 7742 64-Core Processor with 32Gb of RAM. Source code is available at \url{https://anonymous.4open.science/r/root-newton-8D65}.

\subsection*{Extended comparison on Rosenbrock function}
Here we present an extended comparison of linesearch procedures on Rosenbrock function \eqref{eq:rosenbrock} (similar to \Cref{fig:rosenbrock}), with $10$ random initializations and the limit of $1000$ steps. We observe that none of the considered algorithms consistently converge to the exact solution for all of the random seeds, and that \grls{} performs better than the other linesearch methods.
\begin{figure}[h]
    \centering
\includegraphics[width=\linewidth]{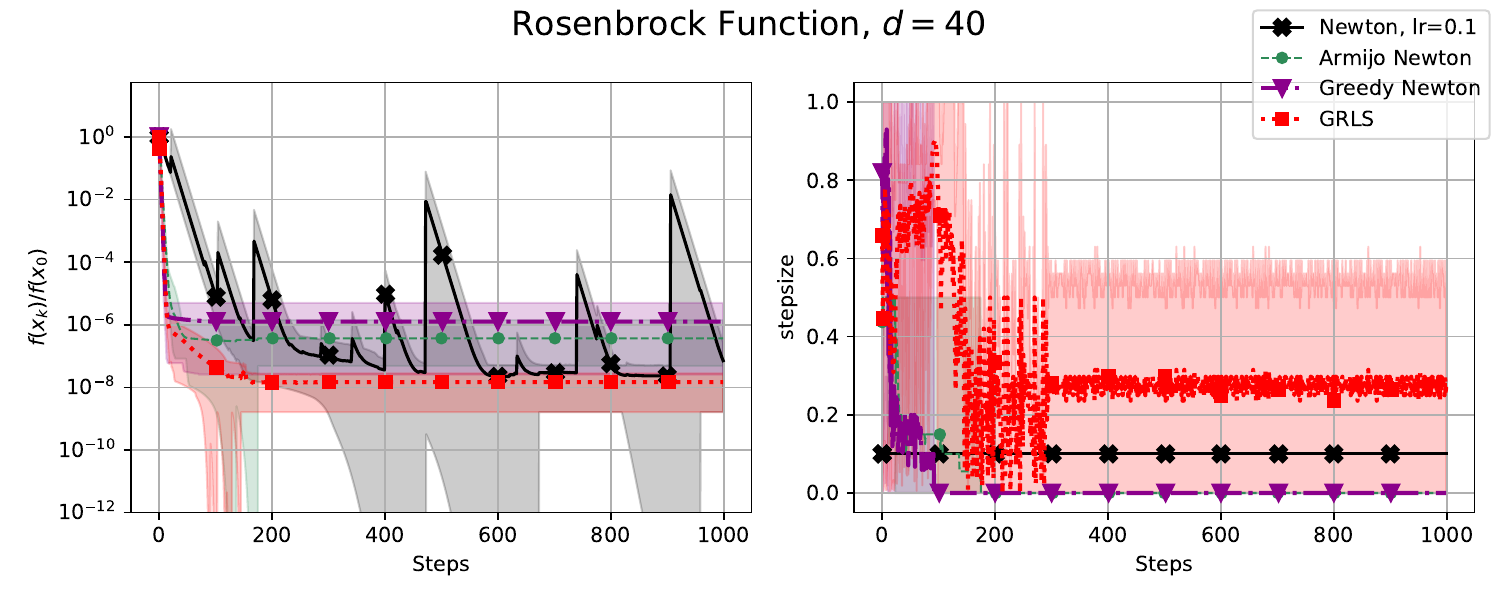}
    \caption{Performance of Newton method stepsize lineserch procedures on nonconvex \textbf{Rosenbrock function} \eqref{eq:rosenbrock}. 
    We plot mean $\pm$ standard deviation of $10$ random initializations. We crop stepsize standard deviation at $0$.}
    \label{fig:rosenbrock_long}
\end{figure}

\section{Fast convergence guarantees for Greedy Newton linesearch} \label{sec:greedy_newton}

If the inequality $\normMd {\g(y)} {x^k} \leq \csup \normMd{ \g(x^k) }{x^k}$ holds for constant $\csup \geq 0$, we have that for stepsizes in a range $[\alow, \aup]$ holds
\begin{align}
     \min_{\makecell{\alpha \in [\alow, \aup]\\ y=x-\alpha \ndirk}} \frac {f(y)-f(x^k)}{\normsMd{\g(x^k)}{x^k}}
    \leq & \csup^2 \cdot \min_{\makecell{\alpha \in [\alow, \aup]\\ y=x-\alpha \ndirk}} \frac {f(y)-f(x^k)}{\normsMd{\g(y)}{x^k}},
\end{align}
proving that Greedy Newton minimizes the target metric of \grls{} up to a constant $\times \csup^2$. If we denote $\hat \cc$ constant with which \grls{} satisfies \Cref{le:one_step_to_rate}, then Greedy Newton satisfies \Cref{le:one_step_to_rate} with constant $\hat \cc \csup^2$ and guarantee convergence similar to \Cref{col:linesearch}.


Now we are going to discuss how constant $\csup$ can be found in different scenarios.

\begin{remark}[General $\Lq$-Hölder continuous functions]
To find $\csup$ we note that \Cref{th:holder_un} shows that stepsize $\alunk \eqdef \frac {1-\prk} \prk \geq \lr 9\Lq \rr^{\frac 1 {q-1}}  \normM {\g(x^k)}{x^k} ^{*\frac {q-2}{q-1}}$ for $\Lq$-Hölder continuous function implies
\begin{align*}
    \frac 1{2(1-\prk)} \normsMd { \g(y)}{x^k}, 
    \leq \la \g(y), \ls \h(x^k) \rs ^{-1}\g(x^{k}) \ra 
    \leq \normMd {\g(y)} {x^k} \normMd{\g(x^{k})}{x^k},
\end{align*}
which after rearranging yields $\normMd { \g(y)}{x^k} \leq 2(1-\prk)\normMd{\g(x^{k})}{x^k}$. 
Therefore if 
\begin{align}
\alpha 
\leq \frac 1 {1+ \lr 9\Lq \rr^{\frac 1 {q-1}}  \normM {\g(x^k)}{x^k} ^{*\frac {q-2}{q-1}}}
\end{align}
or equivalently 
\begin{equation} \label{eq:aup_limit}
\aup 
\leq \lr 1+ \lr 9\Lq \rr^{\frac 1 {q-1}}  \normM {\g(x^k)}{x^k} ^{*\frac {q-2}{q-1}} \rr^{-1}
\leq \lr 1+ \sup_{q\in [2,4]}\lr 9\Lq \rr^{\frac 1 {q-1}}  \normM {\g(x^0)}{x^0} ^{*\frac {q-2}{q-1}} \rr^{-1}.
\end{equation}
In such case, $\csup$ can be set as $\csup=2(1-\alow)$.\\

Note that \eqref{eq:aup_limit} is satisfied by smaller stepsizes, which damped Newton methods use globally until they converge to the neighborhood of the solution.
\end{remark}

\begin{remark}[Hölder continuity of Hessians]
For $\Lsupp 2$-Holder, Lemma 8 yields
\begin{align}
    \normMd {\g(y)} {x^k}
        &\leq \lr |1-\alpha| + \frac {\Lsupp 2}{1+ \nu} \alpha^{1+\nu} \normM {\g(x^k)}{x^k} ^{*\nu} \rr \normMd{ \g(x^k) }{x^k},
\end{align}
ensuring that without any limitation on $\aup$
\begin{align}
\cupper 
&\eqdef \sup_{\alpha \in [\alow, \aup]} |1-\alpha|+\frac {\Lsupp 2}{1+ \nu} \alpha^{1+\nu} \normM {\g(x^k)}{x^k} ^{*\nu}\\
&=\max_{\alpha \in \lc \alow, \aup, 1\rc } |1-\alpha|+\frac {\Lsupp 2}{1+ \nu} \alpha^{1+\nu} \normM {\g(x^k)}{x^k} ^{*\nu}.
\end{align}
For $\alow \leftarrow 0, \aup \leftarrow 1$, we can set 
\begin{align}
    \csup
    &= \max \lc 1, \frac {\Lsupp 2}{1+ \nu} \normM {\g(x^k)}{x^k} ^{*\nu}\rc
    \leq \max \lc 1, \frac {\Lsupp 2}{1+ \nu} \normM {\g(x^0)}{x^0} ^{*\nu} \rc.
\end{align}
\end{remark}

\begin{remark}[$\Lsuppnu 20$-Hölder continuity]
For $\Lsuppnu 20$-Hölder functions with $\Lsuppnu 20 \geq 1$, constant $\csup$ simplifies to $\csup \eqdef \aup \frac { \Lsuppnu 20}{2} + |1-\aup|$, because
\begin{equation}
\begin{cases}
\aup \lr\frac { \Lsuppnu 20}{2} - 1\rr + 1 \geq \alpha \lr\frac { \Lsuppnu 20}{2} - 1\rr + 1 \geq \frac 12, & \text{if } \alpha \leq 1,\\
\aup \lr\frac { \Lsuppnu 20}{2} + 1\rr - 1 \geq \alpha \lr\frac { \Lsuppnu 20}{2} + 1\rr - 1 \geq \frac { \Lsuppnu 20}{2}, & \text{if } \alpha \geq 1.
\end{cases}
\end{equation}
\end{remark}

\section{Connection between stepsizes and regularization} \label{sec:stepsize_is_regularization}
We show connections of particular stepsizes to regularized Newton methods. For fixed $\regc>0, \expo \geq 0$ define regularized model as
\begin{align}
    \modelun{x} \eqdef \argmin_{y \in \R^d} \left \{ f(x)+ \la \g(x), y-x \ra + \frac 12 \normsM {y-x} x + \frac \regc {2+\expo} \normM {y-x} x ^{2+\expo} \right\}.
\end{align}

We can define optimization algorithm \rn{} as
\begin{equation}
x^{k+1} \eqdef \modelun {x^k}
\end{equation}

By first-order optimality condition, solution of model $h^*\eqdef \modelun {x} - x$ satisfy
\begin{gather}
    \left( 1+ \regc \normM {h^*} x ^{\expo} \right) \left[ \h(x) \right] h^* = -\g(x),\\
    h^* = -\underbrace{\left( 1+ \regc \normM {h^*} x ^{\expo} \right)^{-1}}_{\eqdef \pr>0} \left[ \h(x) \right]^{-1} \g(x). 
\end{gather}
Now iterates of \rn{} are in the direction of Newton method (for any $\regc$ and $\expo$) and we can write
\begin{gather}
    h^*= -\pr \left[ \h(x) \right]^{-1} \g(x),\\
    \left[ \h(x) \right] h^*= -\pr \g(x),\\
    \normM {h^*} x = \pr \normMd {\g(x)} x.
\end{gather}
Substituting $\ls\h(x) \rs h^*$ back to the first-order optimality conditions we get \begin{equation}
    0 = \g(x) \left( 1- \pr - \pr^{1+\expo} \regc \normM {\g(x)} x ^{* \expo} \right).
\end{equation}
Thus, $\pr$ defined as a root of the polynomial
\begin{equation}
    P[\alpha] \eqdef 1- \alpha - \alpha^{1+\expo} \regc \normM {\g(x)} x ^{* \expo}
\end{equation}
satisfies first-order optimality condition. Note that $P[0] >0$ and $P[1]\leq 0$, hence $P$ has root on interval $(0,1]$. This will be the stepsize of our algorithm. Also note that P is monotone on $\R_+$,
\begin{equation}
    P'[\alpha] = - 1 - (1+\expo) \alpha^{\expo} \regc \normM {\g(x)} x ^{* \expo} <0,
\end{equation}
and consequently, the positive root of $P$ is unique.

\section{Extra smoothness relations} \label{sec:smoothness_relations}
Let $\gamma \in [0,1]$.
From Hölders continuity, triangle inequality and definition of $\Lsup$,
\begin{align}
    \normM{\nabla^3 f(x)[y-x]} {op}
    & \leq \normM {\h(x) - \h(y)} {op} + \frac {\Lsupp 3 }{1+\nu} \normM{y-x} x ^{1+ \nu}\\
    & \leq \Lsuppnu 2 \gamma \normM {x-y} x ^ \gamma + \frac {\Lsupp 3 }{1+\nu} \normM{y-x} x ^{1+ \nu}
\end{align}
For $y \leftarrow x + \tau h $, where $\normM h x=1, \tau >0$, we can continue
\begin{align}
    \normM{\nabla^3 f(x)} {op}
    & \leq \frac {\Lsuppnu 2 \gamma} {\tau^{1-\gamma}} + \frac {\Lsupp 3 }{1+\nu} \tau^\nu,\\
    & \leq \frac {2+ \nu}{1+\nu} [\Lsuppnu 2 \gamma] ^{\frac {\nu}{1+\nu-\gamma}} { \tau^{1-\gamma}} [\Lsupp 3 ]^{\frac 1{1+\nu-\gamma}}, \qquad // \text{ by } \tau \leftarrow \left[ \frac {\Lsuppnu 2 \gamma}{\Lsupp 3} \right]^{\frac 1{1+\nu-\gamma}}\\
    & \leq \frac 32 \sqrt{\Lsuppnu 2 0 \Lsuppnu 3 1}, \hspace{4cm}  // \text{ by } \gamma \leftarrow 0, \nu \leftarrow 1
\end{align}
and we can summarize
\begin{align}
    \Lsuppnu 3 0 
    &= \sup_{x \neq y} \normM{\nabla^3 f(x)-\nabla ^3 f(y) }{op} 
    \leq \sup_{x \neq y} \left( \normM{\nabla^3 f(x) }{op} + \normM{\nabla ^3 f(y) }{op} \right)
    = 2\sup_{x} \normM{\nabla^3 f(x) }{op}\\
    &\leq \begin{cases}
        2 \Lsuppnu 2 1\\
        3 \sqrt{\Lsuppnu 2 0 \Lsuppnu 3 1}
    \end{cases}.    
\end{align}

\begin{lemma}\label{le:local_decrease}
    If $\Lsupp 2$ exists, for points $x^k, x^{k+1} = x^k-\prk \ls \h(x^k) \rs^{-1}\g(x^k)$ holds decrease
    \begin{align*}
        \normMd {\g(x^{k+1})} {x^k}
        &\leq \lr \alunk + \frac {\Lsupp 2}{1+ \nu} \prk^{\nu} \normM {\g(x^k)}{x^k} ^{*\nu} \rr \prk \normMd{ \g(x^k) }{x^k},
     \end{align*}
    and hence,
    if $\nu >0$ and $\alunk\geq \normM{ \g(x^k) }{x^k}^{* \varepsilon}$ for $\varepsilon>0$, and if the bound \eqref{eq:consbound} exists (meaning that the Hessian does not change much), we have guaranteed superlinear local rate.

    \begin{remark}
    \citet{hanzely2022damped} shows that $\Lsuppnu 21$-Hölder continuity implies self-concordance, and \citep[Theorem 4.1.3]{10.5555/3317111} proves that self-concordance implies positive definiteness of Hessian $\h$ the domain of function $f$ contains no straight line.
    \end{remark}
\end{lemma}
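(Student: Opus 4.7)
The proof rests on a single application of \Cref{prop:holder_eq} combined with the Newton-step identities. I would begin by setting $x=x^k$ and $y=x^{k+1}$ in the Hessian-Holder bound, giving
\begin{equation*}
\normMd{\g(x^{k+1})-\g(x^k)-\h(x^k)[x^{k+1}-x^k]}{x^k} \leq \frac{\Lsupp 2}{1+\nu}\normM{x^{k+1}-x^k}{x^k}^{1+\nu}.
\end{equation*}
Since the Newton update $x^{k+1}-x^k=-\prk[\h(x^k)]^{-1}\g(x^k)$ makes $\h(x^k)[x^{k+1}-x^k]=-\prk\g(x^k)$, the expression inside the norm on the left collapses to $\g(x^{k+1})-(1-\prk)\g(x^k)$, and the triangle inequality produces
\begin{equation*}
\normMd{\g(x^{k+1})}{x^k} \leq (1-\prk)\normMd{\g(x^k)}{x^k} + \frac{\Lsupp 2}{1+\nu}\normM{x^{k+1}-x^k}{x^k}^{1+\nu}.
\end{equation*}

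Next, I would invoke the primal/dual identity \eqref{eq:transition-primdual} to rewrite $\normM{x^{k+1}-x^k}{x^k}=\prk\normMd{\g(x^k)}{x^k}$, so that the last term becomes $\tfrac{\Lsupp 2}{1+\nu}\prk^{1+\nu}\normMd{\g(x^k)}{x^k}^{1+\nu}$. The remaining step is to recognize that \rn{}'s stepsize $\prk$ is the unique positive root of $P[\alpha]$ and hence satisfies $\prk=\tfrac{1}{1+\alunk}$, equivalently $1-\prk=\prk\alunk$. Substituting this identity into the first term and factoring out $\prk\normMd{\g(x^k)}{x^k}$ from both summands yields exactly the bound stated in the lemma.

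There is no genuine obstacle; the argument is essentially algebraic bookkeeping around the polynomial definition of $\prk$. The only care needed is splitting the exponent $1+\nu$ as $\nu+1$ so that $\prk\normMd{\g(x^k)}{x^k}$ can be extracted cleanly from both summands to expose the bracketed factor $\alunk + \tfrac{\Lsupp 2}{1+\nu}\prk^{\nu}\normMd{\g(x^k)}{x^k}^{\nu}$. I would also note that the identity $1-\prk=\prk\alunk$ is valid for whichever regularization exponent $\expo$ parametrizes $\alunk$ in \rn{}, so the resulting bound does not require $\expo$ to coincide with the smoothness exponent $\nu$ coming from the Holder assumption.
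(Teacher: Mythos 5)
Your argument is correct and is essentially the paper's own proof: the paper likewise adds and subtracts $\h(x^k)[\xdiff]+\prk\g(x^k)$ (equivalently, collapses the Holder remainder via the Newton identity), applies the triangle inequality and \Cref{prop:holder_eq} with $p=2$, rewrites $\normM{\xdiff}{x^k}=\prk\normMd{\g(x^k)}{x^k}$ via \eqref{eq:transition-primdual}, and uses $1-\prk=\prk\alunk$ to factor out $\prk\normMd{\g(x^k)}{x^k}$. The only thing you leave implicit is the concluding remark about the superlinear local rate, which the paper handles by a short case split on whether $\alunk$ dominates $\frac{\Lsupp 2}{1+\nu}\prk^{\nu}\normM{\g(x^k)}{x^k}^{*\nu}$; the main displayed inequality, which is the substance of the lemma, is proved exactly as in the paper.
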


\section{Simplified regularization} \label{sec:simplified_reg}
In the view of \Cref{ssec:regularization_generalized} and \Cref{le:polynomial_ag}, we can bound the majorization as
\begin{align}
    \modelun{x} 
    &= \argmin_{y \in \R^d} \left \{ f(x)+ \la \g(x), y-x \ra + \frac 12 \normsM {y-x} x + \frac \regc {2+\expo} \normM {y-x} x ^{2+\expo} \right\}\\
    &\leq \argmin_{y \in \R^d} \left \{ f(x)+ \la \g(x), y-x \ra + \frac \expo {2(\expo +2)} +\frac {\regc +1} {2+\expo} \normM {y-x} x ^{2+\expo} \right\}\\
    &= x- \lr\frac {1}{(\regc+1) \normM {\g(x^k)} {x^k} ^{* \expo}} \rr^{\frac 1{1+\expo}} \ls \h(x) \rs^{-1} \g(x), \label{eq:unbounded}
\end{align}
where stepsize was obtained as the positive root of polynomial $P[\alpha] \eqdef 1- \alpha^{1+\expo} (\regc+1) \normM {\g(x^k)} {x^k} ^{* \expo}$. 

Surprisingly, stepsize is unbounded, and when $\normMd {\g(x)}x \to 0$, then $\alpha \to \infty$. 
This puzzling result has a simple explanation -- such stepsize converges only to a neighborhood of the solution. 

In practice, we could not observe stepsize larger than $5$ on any considered dataset. When close to the solution and the stepsize becomes larger than one, algorithm \eqref{eq:unbounded} stops converging closer to the solution, and functional values oscillate. 

\section{Analysis under uniform star-convexity assumption in local norms}\label{sec:glob_superlinear}

\begin{proof}[\pof{\Cref{th:optami_superlinear}}]
We have that updates of \rn{} with $q=p+\nu = 2+\expo$ and any $\regc \geq \Lq$ can be written as
\begin{align}
    f(x^{k+1}) 
    & \leq \taylor {x^k} (x^{k+1}) + \frac {\regc} {q} \normM {x^{k+1} - x^k}{x^k} ^{q}\\
    & = \min_{y\in \R^d} \lc \taylor {x^k} (y) + \frac {\regc} {q} \normM {y - x}{x^k} ^{q} \rc,\\
    \intertext{using standard integration arguments from $\Lq$-Hölder continuity}
    & \leq \min_{y\in \R^d} \lc f(y) + \frac {\Lq} {(p+1)!} \normM {y - x^k}{x^k} ^{q} + \frac {\regc} {q} \normM {y - x^k}{x^k} ^{q}\rc\\
    &= \min_{y\in \R^d} \lc f(y) + \lr \frac {\Lq} {(p+1)!} + \frac {\regc} {q} \rr \normM {y - x^k}{x^k} ^{q}\rc,\\
    \intertext{setting $y\leftarrow x + \eta_k (\xopt -x^k)$ for arbitrary $\eta_k \in [0,1],$}
    &\leq f\lr x^k + \eta_k (\xopt -x^k) \rr + \eta_k^q\lr \frac {\Lq} {(p+1)!} + \frac {\regc} {q} \rr \normM {x^k - \xopt}{x^k} ^{q},\\
    \intertext{assuming $\mu_s$-strong star-convexity for $q\geq s \geq 2,$}
    &\leq (1-\eta_k)f(x^k) + \eta_k \fopt - \frac {\eta_k (1-\eta_k) \mu_s}s \normM {x^k-\xopt} {x^k} ^s + \eta_k^q \lr \frac {\Lq} {(p+1)!} + \frac {\regc} {q} \rr \normM {x^k - \xopt}{x^k} ^{q},\\
    \intertext{denoting functional suboptimality \gbox{$\delta_k \eqdef f(x^k)-\fopt$}, }
    \delta_{k+1} &
    \leq (1-\eta_k)\delta_k - \eta_k \normM {x^k-\xopt} {x^k} ^s \lr  (1-\eta_k) \frac {\mu_s}s - \eta_k^{q-1} \lr \frac {\Lq} {(p+1)!} + \frac {\regc} {q} \rr \normM {x^k - \xopt}{x^k} ^{q-s} \rr.
\end{align}
Denote expression \gbox{$E(\eta) \eqdef (1-\eta) \frac {\mu_s}s - \eta^{q-1} \lr \frac {\Lq} {(p+1)!} + \frac {\regc} {q} \rr \normM {x - \xopt}x ^{q-s}$} for $\eta \in [0,1]$. Observe that $E'(\eta)<0$ and therefore $E$ is monotonically decreasing on $\R^+$; with $E(0)\geq 0 \leq E(1)$ we can conclude that it has a unique root $\hat \eta$ on $[0,1]$.
With choice $\eta \leftarrow \hat \eta$ in the last inequality we can conclude global convergence rate
\begin{align}
    \delta_{k+1} &
    \leq (1-\hat \eta_k)\delta_k. \label{eq:optami_superlinear_delta}
\end{align}
Note that the root of the expression $E$ is inversely proportional to the distance from the solution $\normM {x - \xopt}x$, and therefore as the method converges, $x^k\to \xopt$, then the size of its root increases $\hat \eta_k \to 1$. Therefore, the global convergence rate \eqref{eq:optami_superlinear_delta} is superlinear.

Unrolling the recurrence \eqref{eq:optami_superlinear_delta} yields the inequality from the \Cref{th:optami_superlinear}.\\

Note that the decrease is based solely on the decrease in functional values, which allows us to prove the identical guarantee for Greedy Newton linesearch \greedy{}. In particular, \greedy{} implies $f(x^+_{\greedy{}}) \leq f(x^+_{\rn{}})$, and we can analogically conclude
\begin{align}
    f(x^{k+1}_{\greedy{}}) - \fopt &
    \leq \lr f(x^{k}_{\greedy{}}) - \fopt\rr (1-\hat \eta_k). \label{eq:optami_superlinear_greedy}
\end{align}
\end{proof}

\section{Analysis under $s$-relative size assumption} \label{sec:doikov_linear}
In this section, we present global convergence guarantees under a novel characteristic called $s$-relative size recently proposed by \citet{doikov2024super}.
\begin{definition}[\citep{doikov2024super}]\label{def:relsize}
For strictly convex function $f: \R^d \to \R$ we call \emph{$s$-relative size} characteristic 
\begin{align*}
    \ds &\eqdef \sup_{x,y \in \level} \lc \normM {x-y}x \lr \frac {\brmax}{\br xy} \rr^{\frac 1s} \rc,
\end{align*}
where $\br xy \eqdef \la \nabla f(x)-\nabla f(y) , x-y \ra>0$ and $\brmax \eqdef \sup_{x,y \in \level} \br xy$.
    \end{definition}
\begin{theorem}\label{th:doikov_linear_from_main}
    Let function $f$ be $\Lsup$-Hölder continuous, with finite relative size $D_q<\infty$ for $q=p+\nu$ (\Cref{def:relsize}) and $\cbound$-bounded Hessian change (\Cref{def:cbound}).
    Algorithms \rn{}, \un{} and \grls{} find points in the $\varepsilon$-neighborhood, $f(x^k)-f(x^*) \leq \varepsilon$, in
    \begin{align*}
        k
        &\leq  \cO \lr \cbound \lr \frac {\Lq\dss q ^q}{\brmax} \rr^{\frac 1{q-1}} \ln \frac {f_0} \varepsilon 
        + \ln \frac {\normMd {\g(x^0)} {x^{0}} D} {\varepsilon} \rr
        \hspace{-4.5mm}
    \end{align*}
    iterations, enjoying a global linear convergence rate.
\end{theorem}


Strict convexity implies $\br xy>0,$ we also have $\lim_{s \to \infty} D_{s}=D,$ also $\frac {\br xy}{\brmax} \leq 1,$ and
\begin{equation}
    \la \nabla f(x)-\nabla f(y) , x-y \ra 
    \geq \brmax \lr \frac {\normM {x-y} x }{\ds} \rr^s
\end{equation}
Characteristic $\ds$ is log-convex function in $s,$ and if $\dss {s_1}, \dss {s_2} <\infty,$ then for  $2 \leq s_1 \leq s \leq s_2$ holds
\begin{equation}
    \ds \leq \ls \dss {s_1} \rs^{\frac {s_2-s} {s_2-s_1}} \ls \dss {s_2} \rs^{\frac {s-s_1} {s_2-s_1}},
\end{equation}
and $\ds$ is continuous on this segment.

\begin{remark}
    For self-concordant functions, it holds $\br xy \geq \normsM {y-x} x$, and $\ds \leq D^{1-\frac 2s} \brmax^{\frac 1s}$.
\end{remark}
\begin{remark}
    For functions such that $\br xy \geq \mu_s \normM{x-y} x^s$ it holds $\ds \leq \lr \frac {\brmax} {\mu_s} \rr^{\frac 1s}$.
    In particular, for self-concordant functions holds $\br xy \geq \normsM {y-x} x$, and therefore $\dss 2 \leq \sqrt {\brmax}$.
\end{remark}

\begin{assumption}
    For some $s\geq 2,$ value of $\ds$ is finite, $\ds < \infty$.
\end{assumption}

\begin{lemma}\label{le:dsd_decrease}
    For any $2\leq s \leq q$, we have 
    \begin{equation}
        \lr \frac {\dss q} D \rr^q
        \leq \lr \frac {\dss s} D \rr^s
    \end{equation}
\end{lemma}
\begin{proof}[\pof{\Cref{le:dsd_decrease}}]
    Analogical to \citet{doikov2024super}.
\end{proof}

Now for any $x, y \in \level,$
\begin{align}
    f(y)
    &=f(x) + \la \g(x), y-x \ra + \int_0^1 \frac 1 \tau \la \g(x+\tau(y-x))-\g(x), \tau (y-x) \ra d\tau\\
    &\geq f(x) + \la \g(x), y-x \ra + \frac 1 s \brmax \lr \frac {\normM {x-y} x }{\ds} \rr^s,
\end{align}
and minimizing both sides w.r.t. $y$ independently, we get
\begin{align} \label{eq:strictly_gnorm_bound}
     \frac {s-1}s \lr \frac {\ds \normMd {\g(x)}x} {\brmax} \rr ^{\frac s {s-1}} 
    \geq \frac {f(x)-\fopt}\brmax
\end{align}

\newcommand{\omqs}{\omega_{q,s}}
\newcommand{\gqs}{{\hat \gamma}}
Let us denote some constants that will appear in proofs.
\begin{mdframed}[backgroundcolor=lightgray!20,hidealllines=true]
\begin{align}
    \gqs &\eqdef \frac {q(s-1)}{(q-1)s} \in \ls \frac 23,2 \rs, \qquad \text{and} \qquad 1-\gqs = \frac {q-s}{(q-1)s}\\
    \omqs &\eqdef \frac 12 \lr \frac s {s-1} \rr^{\gqs} \lr \frac {\brmax^{\frac qs}} {9\Lq\ds^q} \rr^{\frac 1{q-1}} =  
    \frac 12 \lr \frac s {s-1} \rr^{\frac {q(s-1)}{(q-1)s}} \lr \frac {\brmax^{\frac qs}} {9\Lq\ds^q} \rr^{\frac 1{q-1}}\\
    C_q &\eqdef 2\cbound(q-1)(9\Lq)^{\frac 1 {q-1}} D^{\frac q{q-1}}
\end{align}
\end{mdframed}

Note that $\frac {\omqs C_q} {\cbound (q-1)} = \lr \lr \frac s {s-1} \rr^{\frac {s-1}{s}} \frac {\brmax^{\frac 1s} D} \ds \rr^{\frac q{q-1}}.$

\begin{lemma} \label{le:strictly_one_step}
    For $q\in [2,4]$ and $s \in [2,\infty), $ we have
    \begin{equation}
        \frac 1{(\gqs-1) f_{k+1}^{\gqs-1}} - \frac 1{(\gqs-1) f_{k}^{\gqs-1}}
        \geq \omqs \frac {\normsMd{\g(x_{k+1})}{x_{k+1}}}{\normsMd{\g(x_{k})}{x_k}}.
    \end{equation}
\end{lemma}
\begin{proof}
    Analogically to \citet{doikov2024super}.
    \begin{align}
        f_k-f_{k+1}
        &\stackrel{\eqref{eq:one_step_gen}}\geq \frac 12 \lr \frac {1} {9\Lq} \rr^{\frac 1 {q-1}} \frac{\normsMd {\g(x^{k})}{x^k}}{ \normM {\g(x^k)}{x^k}^{*2} } \normM {\g(x^k)}{x^k}^{*\frac{q}{q-1}}\\
        &\stackrel{\eqref{eq:strictly_gnorm_bound}}\geq \frac 12 \lr \frac {1} {9\Lq} \rr^{\frac 1 {q-1}} \frac{\normsMd {\g(x^{k+1})}{x^k}}{ \normM {\g(x^k)}{x^k}^{*2} } \lr \frac {\brmax^{\frac 1s}} \ds \rr^{\frac q{q-1}} \lr \frac s {s-1} \rr^{\gqs} f_k^\gqs\\
        &= \frac 12 \lr \frac s {s-1} \rr^{\gqs} \lr \frac {\brmax^{\frac qs}} {9\Lq\ds^q} \rr^{\frac 1{q-1}} \frac{\normsMd {\g(x^{k+1})}{x^k}}{ \normM {\g(x^k)}{x^k}^{*2} } f_k^\gqs\\
        &= \omqs \frac{\normsMd {\g(x^{k+1})}{x^k}}{ \normM {\g(x^k)}{x^k}^{*2} } f_k^\gqs.
    \end{align}
    If $s\geq q,$ then $\gqs \in [1,2]$ and the function $y(x) \eqdef x^{\gqs-1}$ is concave. With monotonicity of $\lc f_k \rc_{k\geq 0}$, we have
    \begin{align}
        \frac 1{(\gqs-1) f_{k+1}^{\gqs-1}} - \frac 1{(\gqs-1) f_{k}^{\gqs-1}}
        = \frac {f_{k}^{\gqs-1} - f_{k+1}^{\gqs-1}}{(\gqs-1) f_{k+1}^{\gqs-1} f_{k}^{\gqs-1}}
        \geq \frac {f_{k} - f_{k+1}}{f_{k+1}^{\gqs-1} f_{k}}
        \geq \omqs \frac {\normsMd{\g(x_{k+1})}{x_{k}}}{\normsMd{\g(x_{k})}{x_k}}.
    \end{align}
    If $2\leq s<q,$ then $\gqs<1$ and the function $y(x) \eqdef x^{\gqs-1}$ is concave. We have 
    \begin{align}
        \frac 1{(\gqs-1) f_{k+1}^{\gqs-1}} - \frac 1{(\gqs-1) f_{k}^{\gqs-1}}
        = \frac {f_{k}^{1-\gqs} - f_{k+1}^{1-\gqs}}{1-\gqs}
        \geq \frac {f_{k} - f_{k+1}}{f_k^\gqs}
        \geq \omqs \frac {\normsMd{\g(x_{k+1})}{x_{k}}}{\normsMd{\g(x_{k})}{x_k}}.
    \end{align}
\end{proof}

\begin{theorem}\label{th:linrate_1}
    Let function $f$ be $\Lsup$-Hölder continuous with finite $s$-relative size and $\cbound$-bounded Hessian change, $\Lq, \ds< \infty$ for some $q\in [2,4]$ and $s\geq q$ and sequence of iterates $x^0, \dots, x^k$ by generated by one of the algorithms \rn{}, \un{}, \grls{}. If all iterates had function suboptimality worse than $\varepsilon>0$, $f_t\geq \varepsilon$ for $t \in \lc 0, \dots k\rc$, then the algorithm did at most
    \begin{align}
        k
        &\leq \frac \cbound {\omqs (\gqs-1)} \ls \frac 1{f_{k}^{\gqs-1}} - \frac 1{f_{0}^{\gqs-1}} \rs + 2\ln \frac {\normMd {\g(x^0)} {x^{0}} D} {f_k}\\
        &\leq  2\cbound \frac {s(q-1)} {s-q} \lr \frac {s-1} {s} \rr^{\frac {q(s-1)}{(q-1)s}} \lr \frac {9\Lq\ds^q}{\brmax^{\frac qs}} \rr^{\frac 1{q-1}} \ls \varepsilon^{-\frac{s-q}{s(q-1)}} - f_{0}^{-\frac{s-q}{s(q-1)}} \rs + 2\ln \frac {\normMd {\g(x^0)} {x^{0}} D} {\varepsilon}
    \end{align}
    steps. If $s=q,$ treating RHS as limit together with $\lim_{a \to 0} \frac{b^{-a} - c^{-a}} a = \ln \lr \frac c b \rr$ guarantees the linear convergence rate
    \begin{align}
        k
        &\leq  2\cbound \frac {q-1} {q}  
        \lr \frac {9\Lq\dss q ^q}{\brmax} \rr^{\frac 1{q-1}} \ln \frac {f_0} \varepsilon 
        + 2\ln \frac {\normMd {\g(x^0)} {x^{0}} D} {\varepsilon}.
    \end{align}
\end{theorem}
\begin{remark}
We can analogically guarantee the global linear convergence of Greedy Newton linesearch \greedy{} \eqref{eq:greedy_linesearch}, but with a slightly different constant.    
\end{remark}
\begin{proof} Telescoping \Cref{le:strictly_one_step},
    \begin{align}
        \frac 1{(\gqs-1) f_{k}^{\gqs-1}} - \frac 1{(\gqs-1) f_{0}^{\gqs-1}}
        &\geq \omqs \sum _{t=0} ^{k-1} \frac{\normsMd {\g(x^{t+1})}{x^t}}{\normsMd {\g(x^t)}{x^t}}\\
        &\geq k \omqs \lr \prod _{t=0} ^{k-1} \frac{\normsMd {\g(x^{t+1})}{x^t}}{\normsMd {\g(x^t)}{x^t}} \rr^{\frac 1k}\\
        &\geq   \frac{k \omqs} \cbound  \lr \frac {f_k} {\normMd {\g(x^0)} {x^{0}} D} \rr^{\frac k2}\\
        &\geq   \frac{k \omqs} \cbound  \exp \lr - \frac 2k \ln \frac {\normMd {\g(x^0)} {x^{0}} D} {f_k} \rr\\
        &\geq   \frac{k \omqs} \cbound  \lr 1 - \frac 2k \ln \frac {\normMd {\g(x^0)} {x^{0}} D} {f_k} \rr\\
        &=\frac{k \omqs} \cbound - \frac{2 \omqs} \cbound \ln \frac {\normMd {\g(x^0)} {x^{0}} D} {f_k},
    \end{align}
    hence 
    \begin{align}
        k
        &\leq \frac \cbound {\omqs (\gqs-1)} \ls \frac 1{f_{k}^{\gqs-1}} - \frac 1{f_{0}^{\gqs-1}} \rs + 2\ln \frac {\normMd {\g(x^0)} {x^{0}} D} {f_k}\\
        &\leq \frac \cbound {\omqs (\gqs-1)} \ls \frac 1{f_{k}^{\gqs-1}} - \frac 1{f_{0}^{\gqs-1}} \rs + 2\ln \frac {\normMd {\g(x^0)} {x^{0}} D} {\varepsilon}.
    \end{align}
\end{proof}

\begin{theorem}\label{th:linrate2}
    Let funciton $f$ be $\Lsup$-Hölder continuous with finite $s$-relative size and $\cbound$-bounded Hessian change, $\Lq, \ds< \infty$ for some $q\in [2,4]$ and $2\leq s \leq q$ and sequence of iterates $x^0, \dots, x^k$ by generated by one of the algorithms \rn{}, \un{}, \grls{}. If all iterates were far from solution, $f_t\geq \varepsilon>0$ and $g_t\eqdef\normMd {\g(x^t)}{x^t} \geq \delta>0$ for $t \in \lc 0, \dots k\rc$, then the algorithm did at most
    \begin{align}
        k
        & \leq 2 \cbound \frac qs  \lr \frac {s-1}s \rr^{\frac {s-1}{q-1}} 
        \lr \frac {9\Lq\ds^s D^{q-s}} {\brmax} \rr^{\frac 1{q-1}}  \frac  {s(q-1)} {q-s} \ls 1 - \frac sq \lr \lr \frac s {s-1} \rr^{s-1} \frac {\ds^s} {\brmax D^s} \varepsilon \rr^{\frac {q-s}{s(q-1)}} \rs \nonumber \\
        & \qquad + 2 \ln \frac {g_0} \delta
    \end{align}
    steps. If $s=q,$ treating RHS as a limit guarantees linear convergence rate
    \begin{align}
        k
        & \leq 2 \cbound \frac {q-1}q
        \lr \frac {9\Lq D_q^q} {\brmax} \rr^{\frac 1{q-1}} \ln \lr \lr \frac q {q-1} \rr^{ q-1} \frac {\brmax D^q} {D_q^q \varepsilon} \rr + 2 \ln \frac {g_0} \delta.
    \end{align}
\end{theorem}
\begin{proof}
    Note $1-\gqs=\frac {q-s}{s(q-1)}>0$. Let's split the analysis of the method into two stages, $k=m+n$. With $C_q = 2\cbound(q-1)(9\Lq)^{\frac 1 {q-1}} D^{\frac q{q-1}}$, we bound the first stage,
    \begin{align}
        C_q \frac 1 {f_{m}^{\frac 1{q-1}}}
        &\geq C_q \ls \frac 1 {f_{m}^{\frac 1{q-1}}} - \frac 1 {f_{0}^{\frac 1{q-1}}} \rs
        \stackrel{\eqref{eq:one_step_universal}}\geq m \lr \frac {g_m}{g_0} \rr^{\frac 2m}
        = m \exp \lr \frac 2m \ln \frac {g_m}{g_0} \rr\\
        &\geq m+2\ln \frac {g_m}{g_0}
        = m+2\ln \frac {g_m}\delta -2\ln \frac {g_0}\delta.
    \end{align}

    For the second stage, telescoping inequalities for $t=m,\dots, k-1$
    \begin{align}
        \frac 1{\omqs(1-\gqs)} \ls f_{t+1}^{1-\gqs} - f_{t}^{1-\gqs} \rs
        \geq \frac {\normsMd{\g(x_{t+1})}{x_{t+1}}}{\normsMd{\g(x_{t})}{x_t}},
    \end{align}
    we get
    \begin{align}
        \frac \cbound {\omqs(1-\gqs)} \ls f_{m}^{1-\gqs} - \varepsilon^{1-\gqs} \rs
        &\geq \cbound \sum_{t=m}^{k-1} \frac {\normsMd{\g(x_{t+1})}{x_{t+1}}}{\normsMd{\g(x_{t})}{x_t}}
        \geq n \lr \frac {g_k}{g_m}\rr ^{\frac 2n}
        \geq n \lr \frac {\delta}{g_m}\rr ^{\frac 2n}\\
        &\geq n - 2 \ln \frac {g_m} {\delta}.
    \end{align}
    Expressing $n,m$ from the inequalities above and adding them together yields
    \begin{align}
        k\leq C_q \frac 1 {f_{m}^{\frac 1{q-1}}} + \frac \cbound {\omqs(1-\gqs)} \ls f_{m}^{1-\gqs} - \varepsilon^{1-\gqs} \rs + 2 \ln \frac {g_0} \delta.
    \end{align}
    Note that $1-\gqs=\frac {q-s}{s(q-1)}$. Minimizer of RHS in $f_m$ is achieved at
    \begin{equation}
    f_m^*
    \eqdef \lr \frac {C_q \omqs} {\cbound(q-1)} \rr ^{\frac {s(q-1)}q} 
    = \lr \frac s {s-1} \rr^{\frac s-1} \frac {\brmax D^s} {\ds^s}.
    \end{equation}
    Substituting definitions of $f_m^*, \omqs, C_q, \gqs$ into the terms we get
    \begin{align*}
        C_q \frac 1 {f_{m}^{* \, \frac 1{q-1}}}
        &= 2\cbound(q-1)
        \lr \frac {s-1}s \rr^{\frac {s-1}{q-1}}  \lr \frac { 9\Lq \ds^s D^{q-s}} { \brmax} \rr^{\frac 1 {q-1}},\\
        \frac \cbound {\omqs(1-\gqs)} f_{m}^{*\, (1-\gqs)}
        &= \cbound \frac  {s(q-1)} {q-s} \frac 1 \omqs f_{m}^{* \, \frac {q-s}{s(q-1)}}\\
        &= 2\cbound \frac  {s(q-1)} {q-s} \lr \frac {s-1}s \rr^{\frac {s-1}{q-1}} 
        \lr \frac {9\Lq\ds^s D^{q-s}} {\brmax} \rr^{\frac 1{q-1}},\\
        \frac \cbound {\omqs(1-\gqs)} \varepsilon^{1-\gqs} 
        &= 2 \cbound \frac  {s(q-1)} {q-s}  \lr \frac {s-1}s \rr^{\frac {q(s-1)}{(q-1)s}} \lr \frac {9\Lq\ds^q} {\brmax^{\frac qs}} \rr^{\frac 1{q-1}} \varepsilon^{\frac {q-s}{s(q-1)}},
    \end{align*}
    and plugging them back in, we conclude
    \begin{align*}
        k
        &\leq C_q \frac 1 {f_{m}^{* \, \frac 1{q-1}}} + \frac \cbound {\omqs(1-\gqs)} \ls f_{m}^{*\, (1-\gqs)} - \varepsilon^{1-\gqs} \rs + 2 \ln \frac {g_0} \delta\\
        &= 2\cbound (q-1) \frac  {q} {q-s}  \lr \frac {s-1}s \rr^{\frac {s-1}{q-1}} 
        \lr \frac {9\Lq\ds^s D^{q-s}} {\brmax} \rr^{\frac 1{q-1}}
        - \frac \cbound {\omqs(1-\gqs)} \varepsilon^{1-\gqs}  + 2 \ln \frac {g_0} \delta\\
        &= 2 \cbound \frac qs  \lr \frac {s-1}s \rr^{\frac {s-1}{q-1}} 
        \lr \frac {9\Lq\ds^s D^{q-s}} {\brmax} \rr^{\frac 1{q-1}}
         \frac  {s(q-1)} {q-s} \times\\
         & \hspace {4cm} \times \ls 1 - \frac sq \lr \lr \frac s {s-1} \rr^{s-1} \frac {\brmax D^s} {\ds^s} \rr^{\frac {q-s}{s(q-1)}} \varepsilon^{\frac {q-s}{s(q-1)}} \rs + 2 \ln \frac {g_0} \delta.
    \end{align*}
\end{proof}

\section{Proofs} \label{sec:proofs}


\subsection{Proof of Lemma \ref{le:polynomial_ag}}
\begin{proof}[\pof{\Cref{le:polynomial_ag}}]
    Using weighed AG inequality, for $0\leq b \leq p$, we have
    \begin{equation}
        x^b \leq \frac {(p-b) + bx^p}p .
    \end{equation}
    We use this inequality for each term of the polynomial.
\end{proof}

\subsection{Proof of Proposition \ref{prop:holder_eq}}

\begin{proof}[\pof{\Cref{prop:holder_eq}}]
    We can derive all of the inequalities straightforwardly
    \begin{align*}
        \g(y)-\g(x)-\h(x) \ls y-x \rs
        & = \int_0^1 \lr \h(x-\tau(y-x))-\h(x)\rr [y-x] d\tau\\
        \normMd{\g(y)-\g(x)-\h(x) \ls y-x \rs} x
        & \leq \int_0^1 \normM {\h(x-\tau(y-x))-\h(x)}{op} \normM{y-x}x d\tau\\
        & \leq \Lsupp 2 \normM{y-x}x^{1+\nu} \int_0^1 \tau^\nu d\tau\\
        & = \frac{\Lsupp 2}{1+\nu} \normM{y-x}x^{1+\nu},
    \end{align*}
    \begin{align*}
        \h(y)-\h(x)-\td(x) \ls y-x \rs 
        & = \int_0^1 \lr \td(x-\tau(y-x))-\td(x)\rr [y-x] d\tau\\
        \normM{\h(y)-\h(x)-\td(x) \ls y-x \rs} {op}
        & \leq \int_0^1 \normM {\td(x-\tau(y-x))-\td(x)}{op} \normM{y-x}x d\tau\\
        & \leq \Lsupp 3 \normM{y-x}x^{1+\nu} \int_0^1 \tau^\nu d\tau\\
        & = \frac{\Lsupp 3}{1+\nu} \normM{y-x}x^{1+\nu},
    \end{align*}
    \begin{align*}
        \g(y)-\g(x)-\h(x) \ls y-x \rs - \frac 12 \td(x)[y-x]^2
        & = \int_0^1 \int_0 ^\tau \lr \td(x+\regc (y-x))-\td(x)\rr [y-x]^2 d\regc d\tau\\
        \normMd{\g(y)-\g(x)-\h(x) \ls y-x \rs - \frac 12 \td(x)[y-x]^2}{x}
        & \leq \int_0^1 \int_0 ^\tau \normMd{\td(x+\regc (y-x))-\td(x)}x  \normsM{y-x}x d\regc d\tau\\
        & \leq \Lsupp 3 \normM{y-x}x^{2+\nu} \int_0^1 \int_0^\tau \regc^\nu d\regc d\tau\\
        & = \frac{\Lsupp 3}{(1+\nu)(2+\nu)} \normM{y-x}x^{2+\nu}.
    \end{align*}
\end{proof}

\subsection{Proof of Lemma \ref{le:td_bound}}
\begin{proof}[\pof{\Cref{le:td_bound}}]
For any $x,h, y \in \EE$ and taking $y=x+\tau u $ for $\tau >0, \normM u x =1$
\begin{align*}
    0 
    &\leq \normsM h y \leq \normsM h x + \la \nabla ^3 f(x) [h]^2, y-x \ra + \frac {\Lsupp 3 } {1+\nu} \normM{y-x}{x}^{1+\nu} \normsM {h} {x}\\
    0 &\leq \frac 1 \tau \normsM h x + \la \nabla ^3 f(x) [h]^2, u \ra + \frac {\Lsupp 3 \tau^\nu} {1+\nu} \normsM {h} {x}\\
    \normMd {\nabla ^3 f(x) [h]^2} x  &\leq \lr \frac 1 \tau + \frac {\Lsupp 3 \tau^\nu} {1+\nu} \rr \normsM {h} {x}\\
\end{align*}
Setting
\begin{equation*}
    \tau = \lr \frac {1 + \nu}{\Lsupp 3} \rr^{\frac 1 {1+\nu}},
\end{equation*}
we get
\begin{align*}
    \normMd {\nabla ^3 f(x) [h]^2} x  
    &\leq 2 \lr \frac {\Lsupp 3 } {1+\nu} \rr ^{\frac 1 {1+\nu}} \normsM {h} {x}.
\end{align*}
Setting $x^k = x, h= \xdiff$ we get 
\begin{align*}
    \normMd {\nabla ^3 f(x^k) [\xdiff]^2} {x^k}  
    &\leq 2 \lr \frac {\Lsupp 3 } {1+\nu} \rr ^{\frac 1 {1+\nu}} \normsM {\xdiff} {x^k}
    = 2 \lr \frac {\Lsupp 3 } {1+\nu} \rr ^{\frac 1 {1+\nu}} \prk^2 \normsMd {\g(x^k)} {x^k}    
\end{align*}
\end{proof}

\subsection{Proof of Lemma \ref{le:local_decrease}}
\begin{proof}{\pof{\Cref{le:local_decrease}}}
    \begin{align*}
        \normMd {\g(x^{k+1})} {x^k}
        &= \normMd {\g(x^{k+1}) \blue{-\h(x^k) \ls \xdiff \rs - \prk \g(x^k) }}{x^k}\\
        &= \normMd {\g(x^{k+1}) -\g(x^k) - \h(x^k) \ls \xdiff \rs + \lr 1- \prk \rr \g(x^k) }{x^k}\\
        &\leq \normMd {\g(x^{k+1}) -\g(x^k) - \h(x^k) \ls \xdiff \rs}{x^k} + \lr 1- \prk \rr \normMd{ \g(x^k) }{x^k}\\
        &\leq \frac {\Lsupp 2}{1+ \nu} \normM {\xdiff}{x^k} ^{1+\nu} + \lr 1- \prk \rr \normMd{ \g(x^k) }{x^k} \tag{if $\Lsupp 2$ exists}\\
        &= \frac {\Lsupp 2}{1+ \nu} \prk^{1+\nu} \normM {\g(x^k)}{x^k} ^{*(1+\nu)} + \lr 1- \prk \rr \normMd{ \g(x^k) }{x^k}\\
        &= \lr 1- \prk + \frac {\Lsupp 2}{1+ \nu} \prk^{1+\nu} \normM {\g(x^k)}{x^k} ^{*\nu} \rr \normMd{ \g(x^k) }{x^k}\\
        &= \lr \alunk + \frac {\Lsupp 2}{1+ \nu} \prk^{\nu} \normM {\g(x^k)}{x^k} ^{*\nu} \rr \prk \normMd{ \g(x^k) }{x^k}.
    \end{align*}
    Hence
    \begin{align*}
        \normMd {\g(x^{k+1})} {x^k} \leq 
        \begin{cases}
        2 \frac {\Lsupp 2}{1+ \nu} \prk^{1+\nu} \normM{ \g(x^k) }{x^k}^{*(1+\nu)} & \text{ if $\alunk \leq \frac {\Lsupp 2}{1+\nu}\prk^{\nu} \normM{ \g(x^k) }{x^k}^{*\nu}$}\\
        2\alunk \prk \normMd{ \g(x^k) }{x^k} & \text{ if $\alunk \geq \frac {\Lsupp 2} {1+\nu} \prk^{\nu} \normM{ \g(x^k) }{x^k}^{*\nu}$}
        \end{cases}
    \end{align*}
\end{proof}

\subsection{Proof of Lemma \ref{le:holder_bound_two}}
\begin{proof} [\pof{\Cref{le:holder_bound_two}}] 
    We can rewrite the Hölder continuity for points $x^k, x^{k+1}$ s.t. $x^{k+1}=x^k-\prk\lr\h(x^k)\rr^{-1}\g(x^k)$ 
    \begin{align*}
        &\lr\frac {\Lsupp 2} {1+\nu} \lr\prk \normMd {\g(x^k)}{x^k} \rr ^{1+\nu}\rr^2\\
        &=\lr\frac {\Lsupp 2} {1+\nu} \normM {\xdiff}{x^k} ^{1+\nu}\rr^2\\
        &\geq \normsMd{\g(x^{k+1})-\g(x^k)-\h(x^k)\ls\xdiff\rs}{x^k}\\
        &= \normsMd{\g(x^{k+1})-\g(x^k) + \prk\g(x^k)} {x^k}\\
        &= \normsMd{\g(x^{k+1})- \lr 1 -\prk \rr \g(x^k)} {x^k}\\
        &= \normsMd{\g(x^{k+1})}{x^k} + \lr 1 -\prk \rr^2\normsMd{\g(x^k)} {x^k} - 2\lr 1-\prk \rr \la \g(x^{k+1}), \ls \h(x^k)\rs^{-1} \g(x^k) \ra.
    \end{align*}
    We are going to set $\regc$ so that
    \begin{align}
        \frac {1 -\prk } 2 \normsMd{\g(x^k)} {x^k} \geq \frac 1 {2(1-\prk)} \lr\frac {\Lsupp 2} {1+\nu}  \lr\prk \normMd {\g(x^k)}{x^k} \rr ^{1+\nu}\rr^2, \label{eq:regc_two}
    \end{align}
    and hence, we can conclude the proof by rearranging,
    \begin{align*}
        &\la \g(x^{k+1}), \ls \h(x^k)\rs^{-1} \g(x^k) \ra \\
        &\geq \frac 1{2(1-\prk)}  \normsMd{\g(x^{k+1})}{x^k} + \frac {1 -\prk } 2 \normsMd{\g(x^k)} {x^k} - \frac 1 {2(1-\prk)} \lr\frac {\Lsupp 2} {1+\nu}  \lr\prk \normMd {\g(x^k)}{x^k} \rr ^{1+\nu} \rr^2\\
        &\geq \frac 1{2(1-\prk)}  \normsMd{\g(x^{k+1})}{x^k}.
    \end{align*}    
    Now we are going to choose $\regc$ to satisfy \eqref{eq:regc_two}.
    Because $\prk$ is a root of a polynomial $P$, we have
    \begin{equation*}
        1- \prk - \prk^{1+\expo} \lunk =0,
    \end{equation*}
    so the equation \eqref{eq:regc_two} is equivalent to
    \begin{align*}
        1-\prk
        = \prk^{1+\expo} \lunk
        &\geq \frac {\Lsupp 2} {1+\nu}  \prk^{1+\nu} \normM {\g(x^k)}{x^k} ^{*\nu},\\
        \alunk 
        &\geq \frac {\Lsupp 2} {1+\nu} \prk^{\nu} \normM {\g(x^k)}{x^k} ^{*\nu}.
    \end{align*}
\end{proof}

\subsection{Proof of Lemma \ref{le:holder_bound_three}}
\begin{proof}[\pof{\Cref{le:holder_bound_three}}]
    We can rewrite the Hölder continuity for points $x^k, x^{k+1}$ s.t. $x^{k+1}=x^k-\prk\lr\h(x^k)\rr^{-1}\g(x^k)$ 
    \begin{align}
        \hspace{-1cm}&\frac {\Lsupp 3 }{(1+\nu)(2+\nu)} \lr\prk \normMd {\g(x^k)}{x^k} \rr ^{2+\nu}\\
        & =\frac {\Lsupp 3 }{(1+\nu)(2+\nu)} \normM{\xdiff} {x^k} ^{2+ \nu} \\
        & \geq \normMd { \g(x^{k+1}) - \g(x^k) - \h (x^k)[\xdiff] - \frac 12 \nabla^3 f(x^k)[\xdiff]^2 } {x^k}\\
        & = \normMd { \g(x^{k+1}) - (1- \prk) \g(x^k) - \frac 12\nabla^3 f(x^k)[\xdiff]^2 } {x^k}.
    \end{align}
    Squaring
    \begin{align*}
        \hspace{-1cm} & \purple{\lr \frac {\Lsupp 3 }{(1+\nu)(1+ \nu)} \lr\prk \normMd {\g(x^k)}{x^k} \rr ^{2+\nu} \rr^2}\\
        & \geq \normsMd { \g(x^{k+1}) - (1- \prk) \g(x^k) - \frac 12\nabla^3 f(x^k)[\xdiff]^2 } {x^k}\\
        & = \normsMd { \g(x^{k+1})}{x^k}  + (1- \prk)^2 \normsMd {\g(x^k)}{x^k} + \frac 14 \normsMd {\nabla^3 f(x^k)[\xdiff]^2 } {x^k}\\
        & \qquad \magenta{- 2(1-\prk) \la \g(x^{k+1}), \ls \h(x^k) \rs ^{-1}\g(x^{k}) \ra}\\
        & \qquad  {+ (1-\prk) \la \ls \h(x^k) \rs ^{-\frac 12} \g(x^{k}), \ls \h(x^k) \rs ^{-\frac 12} \nabla^3 f(x^k)[\xdiff]^2 \ra}\\
        & \qquad - \la \ls \h(x^k)\rs^{-\frac 12} \g(x^{k+1}), \ls \h(x^k) \rs ^{-\frac 12} \nabla^3 f(x^k)[\xdiff]^2 \ra\\
        & \geq \frac 12 \normsMd { \g(x^{k+1})}{x^k}  + (1- \prk)^2 \normsMd {\g(x^k)}{x^k} \orange{- \frac 14 \normsMd {\nabla^3 f(x^k)[\xdiff]^2 } {x^k}}\\
        & \magenta{\qquad - 2(1-\prk) \la \g(x^{k+1}), \ls \h(x^k) \rs ^{-1}\g(x^{k}) \ra}\\
        & \qquad {- (1-\prk) \normMd {\g(x^{k})}{x^k} \normM {\nabla^3 f(x^k)[\xdiff]^2 } {x^k}}\\
        & \geq \frac 12 \normsMd { \g(x^{k+1})}{x^k}  + (1- \prk)^2 \normsMd {\g(x^k)}{x^k} \orange{- \lr \frac {\Lsupp 3 } {1+\nu} \rr ^{\frac 2 {1+\nu}} \prk^4 \normM {\g(x^k)} {x^k}^4}\\
        & \qquad \magenta{- 2(1-\prk) \la \g(x^{k+1}), \ls \h(x^k) \rs ^{-1}\g(x^{k}) \ra}\\
        & \qquad {- 2 \lr \frac {\Lsupp 3 } {1+\nu} \rr ^{\frac 1 {1+\nu}}  \prk^2(1-\prk) \normM {\g(x^k)} {x^k}^{*3}}.        
    \end{align*}
    Rearranging yields
    \begin{align*}
        \hspace {-1cm}& \magenta{\la \g(x^{k+1}), \ls \h(x^k) \rs ^{-1}\g(x^{k}) \ra} \\
        &\geq \frac 1{4(1-\prk)} \normsMd { \g(x^{k+1})}{x^k}  + \frac  {1- \prk}2 \normsMd {\g(x^k)}{x^k} \orange{- \frac 12 \lr \frac {\Lsupp 3 } {1+\nu} \rr ^{\frac 2 {1+\nu}} \frac{ \prk^4}{1-\prk} \normM {\g(x^k)} {x^k}^{*4}}\\
        & \qquad {- \lr \frac {\Lsupp 3 } {1+\nu} \rr ^{\frac 1 {1+\nu}}  \prk^2 \normM {\g(x^k)} {x^k}^{*3}} 
        \purple{- \frac 1{2(1-\prk)} \lr \frac {\Lsupp 3 }{(1+\nu) (2+ \nu)}  \rr^2 \lr\prk \normMd {\g(x^k)}{x^k} \rr ^{2(2+\nu)}}.
    \end{align*}
    Finally, we are going to set $\alunk$ so that 
    \begin{align}
        \frac  {1- \prk}6 \normsMd {\g(x^k)}{x^k} &\geq \orange{\frac 12 \lr \frac {\Lsupp 3 } {1+\nu} \rr ^{\frac 2 {1+\nu}} \frac{ \prk^4}{1-\prk} \normM {\g(x^k)} {x^k}^{*4} }\label{eq:regc_threea}\\
        \frac  {1- \prk}6 \normsMd {\g(x^k)}{x^k} & \geq { \lr \frac {\Lsupp 3 } {1+\nu} \rr ^{\frac 1 {1+\nu}}  \prk^2 \normM {\g(x^k)} {x^k}^{*3}} \label{eq:regc_threeb}\\
        \frac  {1- \prk}6 \normsMd {\g(x^k)}{x^k} & \geq \purple{ \frac 1{2(1-\prk)} \lr \frac {\Lsupp 3 }{(1+\nu) (2+ \nu)}  \rr^2 \lr\prk \normMd {\g(x^k)}{x^k} \rr ^{2(2+\nu)} }\label{eq:regc_threec}
    \end{align}
    and then we can conclude
    \begin{align*}
        \la \g(x^{k+1}), \ls \h(x^k) \rs ^{-1}\g(x^{k}) \ra 
        &\geq \frac 1{4(1-\prk)} \normsMd { \g(x^{k+1})}{x^k} .
    \end{align*}
    Note that the choice of stepsize implies 
    \begin{equation*}
    1-\prk = \prk^{1+\expo} \lunk
    \end{equation*}        
    and \eqref{eq:regc_threea}, \eqref{eq:regc_threeb}, \eqref{eq:regc_threec} are satisfied as
    \begin{align*}
        &1-\prk = \prk^{1+\expo} \lunk  \geq \\
        & \quad \begin{cases}
        \sqrt 3 \lr \frac {\Lsupp 3 } {1+\nu} \rr ^{\frac 1 {1+\nu}} \prk^2 \normMd {\g(x^k)} {x^k} & \text { if } \alunk \geq \sqrt {3} \lr \frac {\Lsupp 3 } {1+\nu} \rr ^{\frac 1 {1+\nu}} \prk \normMd {\g(x^k)} {x^k}\\
        6 \lr \frac  {\Lsupp 3 } {1+\nu} \rr ^{\frac 1 {1+\nu}}  \prk^2 \normMd {\g(x^k)} {x^k} & \text{ if } \alunk \geq 6 \lr \frac  {\Lsupp 3 } {1+\nu} \rr ^{\frac 1 {1+\nu}}  \prk\normMd {\g(x^k)} {x^k}\\
        \frac {\sqrt 3\Lsupp 3 }{(1+\nu) (1+ \nu)}  \prk^{2+\nu} \normM {\g(x^k)}{x^k} ^{*(1+\nu)} & \text{ if } \alunk \geq \frac {\sqrt 3\Lsupp 3 }{(1+\nu) (2+ \nu)}  \prk^{1+\nu} \normM {\g(x^k)}{x^k} ^{*(1+\nu)}.
        \end{cases} 
    \end{align*}
    We can ensure \eqref{eq:regc_threea}, \eqref{eq:regc_threeb}, \eqref{eq:regc_threec} by 
    \begin{align*}
        \alunk 
        &\geq \prk\normMd {\g(x^k)} {x^k} \max \lc 6 \lr \frac  {\Lsupp 3 } {1+\nu} \rr ^{\frac 1 {1+\nu}}, \frac {\sqrt {3}\Lsupp 3 }{(1+\nu) (2+ \nu)}  \prk^{\nu} \normM {\g(x^k)}{x^k} ^{*\nu} \rc.
    \end{align*}
\end{proof}

\subsection{Towards the proof of Theorem \ref{th:holder_un}}
We unify cases $p=2,3$ with the \Cref{le:alunk_alpha}. 
\begin{corollary} \label{col:holder_three}
    \Cref{le:alunk_alpha} with $\gamma=\nu$ implies that choice $\alunk = \lr \frac {\Lsupp 2} {1+\nu} \rr^{\frac 1 {1+\nu}}  \normM {\g(x^k)}{x^k} ^{*\frac \nu{1+\nu}}$ satisfies \eqref{eq:alunkmintwo}, and therefore
    \Cref{le:holder_bound_two} implies decrease as \citet{doikov2024super},
    \begin{align}
        f(x^k)-f(x^{k+1})
        &\geq \frac {1} {\alunk}\normsMd { \g(x^{k+1})} {x^k}
        \geq \lr \frac {1+\nu} {\Lsupp 2} \rr^{\frac 1 {1+\nu}} \frac{\normsMd {\g(x^{k+1})}{x^k}}{ \normM {\g(x^k)}{x^k} ^{*\frac \nu{1+\nu}} } .
    \end{align}
    
    \Cref{le:alunk_alpha} with $\gamma \in \{1, 1+\nu \}$ implies that the choice 
    \begin{align}
        \alunk
        &\geq \normM {\g(x^k)}{x^k} ^{* \frac 12} \max \lc \lr \frac  {6^{1+\nu}\Lsupp 3 } {1+\nu} \rr ^{\frac 1 {2(1+\nu)}}, \lr \frac {\sqrt {3} \Lsupp 3 }{(1+\nu) (2+ \nu)} \rr^{\frac 1{2+\nu}} \normM {\g(x^k)}{x^k} ^{*\frac{\nu}{2(2+\nu)}} \rc, \label{eq:alunkthreetight}
    \end{align}
    satisfies \eqref{eq:alunkminthree}, and therefore \Cref{le:holder_bound_three} implies decrease
    \begin{align}
        &f(x^k)-f(x^{k+1})
        \geq \frac {1} {2\alunk}\normsMd { \g(x^{k+1})} {x^k}\\
        &\qquad \geq \frac 1 { \max \lc \lr \frac  {6^{1+\nu}\Lsupp 3 } {1+\nu} \rr ^{\frac 1 {2(1+\nu)}}, \lr \frac {\sqrt {3} \Lsupp 3 }{(1+\nu) (2+ \nu)} \rr^{\frac 1{2+\nu}} \normM {\g(x^k)}{x^k} ^{*\frac{\nu}{2(2+\nu)}} \rc} \frac{\normsMd {\g(x^{k+1})}{x^k}}{ \normM {\g(x^k)}{x^k}^{*\frac12} }.
    \end{align}

    On the other hand, choice of $\alunk = \lr \frac  {6^{1+\nu}\Lsupp 3 } {1+\nu} \rr ^{\frac 1 {2+\nu}} \normM {\g(x^k)}{x^k}^{*\frac {1+\nu}{2+\nu}}$ in \Cref{le:holder_bound_three} implies decrease as \citet{doikov2024super},
    \begin{align}
        f(x^k)-f(x^{k+1})
        &\geq \frac {1} {2\alunk}\normsMd { \g(x^{k+1})} {x^k}
        \geq \frac 1 2\lr \frac {1+\nu} {6^{1+\nu}\Lsupp 3} \rr^{\frac 1 {2+\nu}} \frac{\normsMd {\g(x^{k+1})}{x^k}}{ \normM {\g(x^k)}{x^k}^{*\frac{1+\nu}{2+\nu}} }. \label{eq:pthreedoikov}
    \end{align}
\end{corollary}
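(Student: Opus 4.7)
The plan is to treat the corollary as three independent applications of Lemma \ref{le:alunk_alpha}, each chosen so that the resulting $\prk$-free lower bound on $\alunk$ implies the $\prk$-dependent hypothesis required by Lemma \ref{le:holder_bound_two} or Lemma \ref{le:holder_bound_three}. Once the hypothesis is verified, the inner-product bound supplied by those lemmas is fed into \eqref{eq:preholder} to obtain the functional decrease.

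For the first claim (Holder continuous Hessian), I would apply Lemma \ref{le:alunk_alpha} with $\delta=\nu$ and $c_3=\frac{\Lsupp 2}{1+\nu}$. The choice $\alunk = \lr\frac{\Lsupp 2}{1+\nu}\rr^{\frac{1}{1+\nu}} \normM{\g(x^k)}{x^k}^{*\frac{\nu}{1+\nu}}$ is exactly $c_3^{\frac{1}{1+\delta}}\normM{\g(x^k)}{x^k}^{*\frac{\delta}{1+\delta}}$, so the lemma yields $\alunk \geq \frac{\Lsupp 2}{1+\nu}\prk^{\nu}\normM{\g(x^k)}{x^k}^{*\nu}$, which is precisely \eqref{eq:alunkmintwo}. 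Lemma \ref{le:holder_bound_two} then provides the inner-product lower bound, and plugging into \eqref{eq:preholder} with $c_1=1$ gives the advertised decrease.

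For the second claim (Holder continuous third derivative), the hypothesis \eqref{eq:alunkminthree} is a maximum of two terms in $\prk\normMd{\g(x^k)}{x^k}$: one linear (the $6$-constant term, $\delta=1$) and one of order $1+\nu$ (the $\sqrt 3$-constant term, $\delta=1+\nu$). I would handle each term by a separate invocation of Lemma \ref{le:alunk_alpha}: taking $\delta=1$ with $c_3 = 6\lr\frac{\Lsupp 3}{1+\nu}\rr^{\frac{1}{1+\nu}}$ produces the threshold $\lr\frac{6^{1+\nu}\Lsupp 3}{1+\nu}\rr^{\frac{1}{2(1+\nu)}}\normM{\g(x^k)}{x^k}^{*\frac12}$, while taking $\delta=1+\nu$ with $c_3 = \frac{\sqrt{3}\Lsupp 3}{(1+\nu)(2+\nu)}$ produces the other threshold; their maximum is exactly \eqref{eq:alunkthreetight}. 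Lemma \ref{le:holder_bound_three} then applies, and \eqref{eq:preholder} with $c_1=2$ gives the decrease. The third claim is the same calculation with only the $\delta=1+\nu$ branch retained, which drops the max and yields the simpler Doikov-type expression $\alunk = \lr \frac{6^{1+\nu}\Lsupp 3}{1+\nu} \rr^{\frac{1}{2+\nu}} \normM{\g(x^k)}{x^k}^{*\frac{1+\nu}{2+\nu}}$ together with decrease constant $\frac{1}{2}\lr\frac{1+\nu}{6^{1+\nu}\Lsupp 3}\rr^{\frac{1}{2+\nu}}$ as in \eqref{eq:pthreedoikov}.

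There is no conceptual obstacle here; the content of the corollary is Lemma \ref{le:alunk_alpha} plus Lemmas \ref{le:holder_bound_two}–\ref{le:holder_bound_three} applied in series. The only thing to watch is exponent bookkeeping, in particular the identity $\tfrac12+\tfrac{\nu}{2(2+\nu)}=\tfrac{1+\nu}{2+\nu}$ used to rewrite the second term of the max in part two, and the algebraic equivalence $6^{\frac12}\lr\frac{\Lsupp 3}{1+\nu}\rr^{\frac{1}{2(1+\nu)}} = \lr\frac{6^{1+\nu}\Lsupp 3}{1+\nu}\rr^{\frac{1}{2(1+\nu)}}$ used to repackage the constants. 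With these in hand, each of the three decrease inequalities in the corollary is a one-line substitution.
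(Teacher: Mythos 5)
Your handling of the first two claims is correct and is essentially the intended argument: \Cref{le:alunk_alpha} with $\delta=\nu$, $c_3=\frac{\Lsupp 2}{1+\nu}$ gives \eqref{eq:alunkmintwo}, and two separate invocations with $(\delta,c_3)=\lr 1,\, 6\lr\tfrac{\Lsupp 3}{1+\nu}\rr^{\frac 1{1+\nu}}\rr$ and $(\delta,c_3)=\lr 1+\nu,\, \tfrac{\sqrt3\,\Lsupp 3}{(1+\nu)(2+\nu)}\rr$ reproduce exactly the two terms of \eqref{eq:alunkthreetight}; your exponent identities $\tfrac12+\tfrac{\nu}{2(2+\nu)}=\tfrac{1+\nu}{2+\nu}$ and $6^{\frac12}\lr\tfrac{\Lsupp 3}{1+\nu}\rr^{\frac1{2(1+\nu)}}=\lr\tfrac{6^{1+\nu}\Lsupp 3}{1+\nu}\rr^{\frac1{2(1+\nu)}}$ check out. (The paper treats the $6$-term differently, via $\prk\le\prk^{\frac1{1+\nu}}\le\alunk^{-\frac1{1+\nu}}$, which yields a threshold with gradient exponent $\tfrac{1+\nu}{2+\nu}$; your $\delta=1$ route is the one that literally produces \eqref{eq:alunkthreetight}.) Feeding the resulting inner-product bounds into \eqref{eq:preholder} with $c_1=1$ and $c_1=2$ is exactly what the paper does.

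The gap is the third claim. You say it is "the same calculation with only the $\delta=1+\nu$ branch retained, which drops the max", but \eqref{eq:alunkminthree} is a \emph{maximum} of two requirements, and the choice $\alunk=\lr\tfrac{6^{1+\nu}\Lsupp 3}{1+\nu}\rr^{\frac1{2+\nu}}\normM{\g(x^k)}{x^k}^{*\frac{1+\nu}{2+\nu}}$ must be shown to satisfy the linear ($6$-constant) requirement $\alunk\ge 6\lr\tfrac{\Lsupp 3}{1+\nu}\rr^{\frac1{1+\nu}}\prk\normMd{\g(x^k)}{x^k}$ as well. That does not follow from the $\delta=1+\nu$ branch: when $\prk\normMd{\g(x^k)}{x^k}<1$ the $(1+\nu)$-power quantity is the smaller one, so exceeding it proves nothing about the linear term; nor is the Doikov-type choice in general larger than your $\delta=1$ threshold $\lr\tfrac{6^{1+\nu}\Lsupp 3}{1+\nu}\rr^{\frac1{2(1+\nu)}}\normM{\g(x^k)}{x^k}^{*\frac12}$ (the comparison flips depending on whether the constant and the gradient norm exceed $1$). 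Note also that the constant $\tfrac{6^{1+\nu}}{1+\nu}$ in \eqref{eq:pthreedoikov} originates from the $6$-term, so "keeping only the $\sqrt3$ branch" would produce $\lr\tfrac{\sqrt3\,\Lsupp 3}{(1+\nu)(2+\nu)}\rr^{\frac1{2+\nu}}$, not the stated constant. The missing step is the paper's: use $\prk\le\prk^{\frac1{1+\nu}}\le\alunk^{-\frac1{1+\nu}}$ so the $6$-term requirement is implied by $\alunk^{\frac{2+\nu}{1+\nu}}\ge 6\lr\tfrac{\Lsupp 3}{1+\nu}\rr^{\frac1{1+\nu}}\normMd{\g(x^k)}{x^k}$, i.e.\ by $\alunk\ge\lr\tfrac{6^{1+\nu}\Lsupp 3}{1+\nu}\rr^{\frac1{2+\nu}}\normM{\g(x^k)}{x^k}^{*\frac{1+\nu}{2+\nu}}$, and then observe $\tfrac{6^{1+\nu}}{1+\nu}\ge\tfrac{\sqrt3}{(1+\nu)(2+\nu)}$, so this single threshold also covers the $\sqrt3$-term; only then does \Cref{le:holder_bound_three} apply and give \eqref{eq:pthreedoikov}.
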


\subsubsection{Proof of Theorem \ref{th:holder_un}}
We can combine previous corollaries.
\begin{proof} [\pof{\Cref{th:holder_un}}]    
    For $p=2$, choice $\alunk = \lr \frac {\Lsup } {p-1+\nu} \rr^{\frac 1 {p-1+\nu}}  \normM {\g(x^k)}{x^k} ^{*\frac {p-2+\nu}{p-1+\nu}}$ implies
    \begin{align}
        f(x^k)-f(x^{k+1})
        \geq \lr \frac {p-1+\nu} {\Lsup} \rr^{\frac 1 {p-1+\nu}} \frac{\normsMd {\g(x^{k+1})}{x^k}}{ \normM {\g(x^k)}{x^k} ^{*\frac {p-2+\nu}{p-1+\nu}} } .
    \end{align}

   For $p=3$, choice $\alunk = 6\lr \frac  {\Lsup } {3(p-1+\nu)} \rr ^{\frac 1 {p-1+\nu}} \normM {\g(x^k)}{x^k}^{*\frac {p-2+\nu}{p-1+\nu}}$ implies
    \begin{align}
        f(x^k)-f(x^{k+1})
        \geq \frac 1 {12} \lr \frac {3(p-1+\nu)} {\Lsup} \rr^{\frac 1 {p-1+\nu}} \frac{\normsMd {\g(x^{k+1})}{x^k}}{ \normM {\g(x^k)}{x^k}^{*\frac{p-2+\nu}{p-1+\nu}} }. 
    \end{align}
    And for any $p \in \lc 2,3 \rc$ we have that $\alunk = 6\lr \frac  {\Lsup } {3(p-1+\nu)} \rr ^{\frac 1 {p-1+\nu}} \normM {\g(x^k)}{x^k}^{*\frac {p-2+\nu}{p-1+\nu}}$ implies
    \begin{align}
        f(x^k)-f(x^{k+1})
        \geq \frac 1 {12} \lr \frac {3(p-1+\nu)} {\Lsup} \rr^{\frac 1 {p-1+\nu}} \frac{\normsMd {\g(x^{k+1})}{x^k}}{ \normM {\g(x^k)}{x^k}^{*\frac{p-2+\nu}{p-1+\nu}} }.
    \end{align}
\end{proof}

\subsection{Proof of Lemma \ref{le:alunk_alpha}}
\begin{proof}[\pof{\Cref{le:alunk_alpha}}]
    Consider any $c_2, \delta>0$. Inequality $\alunk \geq c_2^{\frac 1{1+\delta}} $ implies
    \begin{align*}
        \frac 1 {\alunk^\delta} c_2
        \geq c_2 \prk ^{\delta},
    \end{align*}
    which is ensured by 
    \begin{align*}
        \alunk \geq \frac 1 {\alunk^\delta} c_2,
    \end{align*}
    or equivalently 
    \begin{align*}
        \alunk \geq c_2^{\frac 1{1+\delta}}.
    \end{align*}
    Now, choice
    $c_2=c_3\normM{\g(x^k)}{x^k}^{*\delta}$ guarantees that 
        $\alunk \geq c_3^{\frac 1{1+\delta}} \normM{\g(x^k)}{x^k} ^{*\frac {\delta}{1+\delta}}$
    ensures
        $\alunk \geq c_3 \lr \prk \normMd{\g(x^k)}{x^k} \rr ^{\delta}.$
\end{proof}

\subsection{Proof of Corollary \ref{col:holder_three}}
\begin{proof}[\pof{\Cref{col:holder_three}}]

    For the first part of \eqref{eq:alunkminthree}, we use $\prk, \nu \in [0,1]$ to bound $\frac 1 {\alunk^{\frac 1 {1+\nu}}}\geq \prk^{\frac 1{1+\nu}} \geq \prk$ and
    \begin{align*}
        \frac 1 {\alunk^{\frac 1 {1+\nu}}} 6 \lr \frac  {\Lsupp 3 } {1+\nu} \rr ^{\frac 1 {1+\nu}} \normMd {\g(x^k)} {x^k}
        \geq 6 \lr \frac  {\Lsupp 3 } {1+\nu} \rr ^{\frac 1 {1+\nu}} \prk\normMd {\g(x^k)} {x^k}.
    \end{align*}
    Now, the first part of \eqref{eq:alunkminthree} is ensured by $\alunk$ so that 
    \begin{align*}
        \alunk 
        \geq \frac 1 {\alunk^{\frac 1 {1+\nu}}} 6 \lr \frac  {\Lsupp 3 } {1+\nu} \rr ^{\frac 1 {1+\nu}} \normMd {\g(x^k)} {x^k},
    \end{align*}
    or equivalently
    \begin{align*}
        \alunk 
        \geq \lr \frac  {6^{1+\nu} \Lsupp 3 } {1+\nu} \rr ^{\frac 1 {2+\nu}} \normM {\g(x^k)} {x^k}^{*\frac{1+\nu}{2+\nu}}.
    \end{align*}
    
    We ensure the second part of \eqref{eq:alunkminthree} directly using \Cref{le:alunk_alpha} and together with first part we have
    \begin{align*}
        \alunk 
        & \geq \max \lc\lr \frac  {6^{1+\nu} \Lsupp 3 } {1+\nu} \rr ^{\frac 1 {2+\nu}} \normM {\g(x^k)} {x^k}^{*\frac{1+\nu}{2+\nu}},
        \lr \frac {\sqrt {3}\Lsupp 3 }{(1+\nu) (2+ \nu)} \rr^{\frac 1 {2+\nu}} \normM {\g(x^k)}{x^k}^{*\frac {1+\nu}{2+\nu}}
        \rc\\
        & = \lr \frac  {\Lsupp 3 } {1+\nu} \rr ^{\frac 1 {2+\nu}} \normM {\g(x^k)}{x^k}^{*\frac {1+\nu}{2+\nu}} \max \lc 6^{\frac{1+\nu}{2+\nu}},
        \lr \frac {\sqrt 3}{2+ \nu} \rr^{\frac 1 {2+\nu}}
        \rc\\
        &= \lr \frac  {6^{1+\nu} \Lsupp 3 } {1+\nu} \rr ^{\frac 1 {2+\nu}} \normM {\g(x^k)}{x^k}^{*\frac {1+\nu}{2+\nu}}.
    \end{align*}
\end{proof}

\subsection{Proof of Lemma \ref{le:one_step_to_rate}}
\begin{proof} [\pof{\Cref{le:one_step_to_rate}}]
For $0 \leq \beta \leq 1$, function $y(x)=x^\beta, x \geq 0$ is concave, which implies
\begin{equation}\label{eq:concave}
a^\beta-b^\beta \geq \frac{\beta}{a^{1-\beta}}(a-b), \quad \forall a>b \geq 0,
\end{equation}
which we will be using for $\gb \eqdef \frac 1 {q-1} =\lr 0, 1\rs.$ We rewrite functional value decrease as
\begin{align}
    \frac 1 {f_{k+1}^\gb} - \frac 1 {f_{k}^{\gb}}
    & = \frac {f_k^{\gb}-f_{k+1}^{\gb}}{f_{k}^{\gb} f_{k+1}^{\gb}}
    \stackrel{\eqref{eq:concave}}\geq \frac{ \gb (f_{k} - f_{k+1})}{f_{k} f_{k+1}^{\gb}}        
    \stackrel{\eqref{eq:one_step_gen}}{\geq} \gb \cc \frac{\normsMd {\g(x^{k+1})}{x^k}}{ \normM {\g(x^k)}{x^k}^{*\frac{q-2}{q-1}} } \frac 1 { f_k f_{k+1}^{\frac 1{q-1}}}\\
    &\geq \gb \cc \frac{\normsMd {\g(x^{k+1})}{x^k}}{ \normM {\g(x^k)}{x^k}^{*\lr2- \frac{q}{q-1} \rr} } \frac 1 { f_k^{\frac q{q-1}}}
    \geq \frac {\gb \cc} {D^{1+\gb}} \frac{\normsMd {\g(x^{k+1})}{x^k}}{ \normsMd {\g(x^k)}{x^k}},
\end{align}
where in the last step we used the convexity of $f$ in the form $f_k \leq D \normMd {\g(x^k)} {x^k}$. We can continue by summing it for $k=0, \dots, n-1$, 
    
\begin{align}
    \frac 1 {f_{n}^\gb} - \frac 1 {f_{0}^{\gb}}
    & \geq\frac {\gb \cc} {D^{1+\gb}} \sum _{k=0} ^{n-1} \frac{\normsMd {\g(x^{k+1})}{x^k}}{\normsMd {\g(x^k)}{x^k}} \\
    & \stackrel {AG} \geq  \frac {\gb \cc n} {D^{1+\gb}} \lr \prod _{k=0} ^{n-1} \frac{\normsMd {\g(x^{k+1})}{x^k}}{ \normsMd {\g(x^k)}{x^k}} \rr^{\frac 1 n} \\
    & = \frac {\gb \cc n} {D^{1+\gb}} \lr \prodin k{n-1} \frac{\normsMd {\g(x^{k})}{x^{k-1}}}{ \normsMd {\g(x^k)}{x^{k}}} \rr^{\frac 1 n} \lr \frac{\normMd {\g(x^{n})}{x^{n-1}}}{ \normMd {\g(x^0)}{x^0}} \rr^{\frac 2 n} \\
    & \geq \frac {\gb \cc n} {\cbound D^{1+\gb}} \lr \frac {f_n}{\normMd {\g(x^0)} {x^{0}} D} \rr^{\frac 2 n} \label{eq:one_step_universal}\\
    & = \frac {\gb \cc n} {\cbound D^{1+\gb}} \exp \lr - \frac 2 n \ln \lr \frac {\normMd {\g(x^0)} {x^{0}} D} {f_n} \rr \rr\\
    & \geq \frac {\gb \cc n} {\cbound D^{1+\gb}} \lr 1 - \frac 2 n \ln \lr \frac {\normMd {\g(x^0)} {x^{0}} D} {f_n} \rr \rr
\end{align}
    We can bound $f_n$ based on the size of $\frac 2 n \frac {\normMd {\g(x^0)} {x^{0}} D} {f_n}$.
    \begin{enumerate}
        \item If $\frac 2 n \ln \lr \frac {\normMd {\g(x^0)} {x^{0}} D} {f_n} \rr \geq \frac 12$, then $f_n \leq \normMd {\g(x^0)} {x^{0}} D \exp\lr -\frac k4\rr$.
        \item If $\frac 2 n \ln \lr \frac {\normMd {\g(x^0)} {x^{0}} D} {f_n} \rr < \frac 12$, then      
                \begin{gather}
                    \frac 1 {f_{n}^{\gb}}
                    >  \frac 1 {f_{n}^{\gb}} - \frac 1 {f_0^{\gb}}
                    \geq \frac {\gb \cc n} {2\cbound D^{1+\gb}}
                    \Leftrightarrow f_n < \lr \frac {2\cbound D^{1+\gb}} {\gb \cc n} \rr ^{\frac 1 \gb}
                    = \frac {D^q \lr 2\cbound (q-1) \rr ^{q-1}}{\cc^{q-1} n^{q-1}} 
                \end{gather}                
    \end{enumerate}
    Hence 
    \begin{equation}
        f_n \leq\frac {D^q \lr 2\cbound (q-1) \rr ^{q-1}}{\cc^{q-1} n^{q-1}} + \normMd {\g(x^0)} {x^{0}} D \exp\lr -\frac k4\rr.
    \end{equation}
\end{proof}

\subsection{Proof of Theorem \ref{th:superlinear}}
\begin{proof}[\pof{\Cref{th:superlinear}}]
Bounded Hessian change together with condition \eqref{eq:ip_un} in \Cref{th:holder_un} imply inequalities
\begin{align}
    &\normMd {\g(x^{k+1})} {x^k} \normMd{\g(x^{k})}{x^k}
    \geq \la \g(x^{k+1}), \ls \h(x^k) \rs ^{-1}\g(x^{k}) \ra 
    \geq \frac 1{2\prk\alunk} \normsMd { \g(x^{k+1})}{x^k}, \nonumber\\
    &\normMd{\g(x^{k})}{x^k}
    \geq \frac 1{2\prk\alunk} \normMd { \g(x^{k+1})}{x^k}
    \geq \frac \cbound{2\prk\alunk} \normMd { \g(x^{k+1})}{x^{k+1}}
    \qquad \lr \geq \frac \cbound 2 \normMd{\g(x^{k+1})}{x^{k+1}} \rr, \label{eq:consbound}
\end{align}
which for $\alunk$ from \eqref{eq:alunkminun} guarantees local superlinear rate for $q>2$.
\end{proof}

\subsection{Proof of Theorem \ref{th:global}}
\begin{proof} [\pof{\Cref{th:global}}]
\Cref{th:holder_un} implies that \Cref{alg:rn} satisfies requirements of \Cref{le:one_step_to_rate} with correspondent $q$ and $\cc=\frac 12 \lr \frac {1} {9\Lq} \rr^{\frac 1 {q-1}}.$ The convergence rate follows.
\end{proof}

\subsection{Proof of Lemma \ref{le:universal}}
\begin{proof}[\pof{\Cref{le:universal}}]
    We will prove the statement by induction. The base for $\regc_0$ holds. For $k$-th iteration, consider $2$ cases based on the number of iterations of the inner loop.
    \begin{enumerate}
        \item Algorithm continues after $j_k>0$ inner iterations. Note that if $\alunkest {j_k-1}$ satisfied \eqref{eq:alunkminun}, \Cref{th:holder_un} guarantees the continuation condition to be satisfied for $j_k-1$. Consequently, $\alunkest {j_k-1}$ does not satisfy \eqref{eq:alunkminun} for any $q\in[2,4]$, and hence
        \begin{align}
        \regc_{k+1}
        = \frac {\alunkest {j_k-1}}{ \normM {\g(x^{k})} {x^{k}}^{*\expo}}
        < \inf_{q\in[2,4]} \lr 9\Lq \rr^{\frac 1 {q-1}}  \normM {\g(x^k)}{x^{k}} ^{*\frac {q-2}{q-1}-\expo}
        = \univ{x^k}.
        \end{align}
        \item Algorithm continues after $j=0$ iterates, then from \eqref{eq:consbound} we have
        \begin{align}
            \regc_{k+1} 
            = \frac {\regck} \cbound
            \leq \frac 1 \cbound \univ{x^{k-1}}
            \leq \cbound^{\frac {q-2}{q-1}-1} \univ{x^k}
            \leq \univ{x^k}.
        \end{align}
    \end{enumerate}
    For the total number of oracle calls $N_K,$
    \begin{align}
        N_K 
        &= \sum _{k=0} ^{K-1} (1+j_k)
        = K + \sum _{k=0} ^{K-1} \log_c \frac{c\regc_{k+1}}{\regck}
        = 2K+ \log_c \frac{\regc_{K}}{\regc_0}\\
        &\leq 2K+ \log_c \frac{\univ{\normMd{x^{k-1}}{x^{k-1}}}}{\regc_0}.
    \end{align}
\end{proof}

\subsection{Proof of Theorem \ref{th:global_universal}}
\begin{proof}[\pof{\Cref{th:global_universal}}]
    \Cref{alg:un} sets $x^{k+1}=x^k_{j_k}$ so that
    \begin{align}
        \la \g(x^k_{j_{k-1}}), n^k \ra &< \frac 1 {2\pr_{k,j_{k-1}} \alunkest {j_{k-1}}} \normsMd{\g(x^k_{j_{k-1}})} {x^k}, \\
        \la \g(x^k_{j_k}), n^k \ra &\geq \frac 1 {2\pr_{k,j_k} \alunkest {j_k}} \normsMd{\g(x^k_{j_k})} {x^k}.
    \end{align}
    From \Cref{th:holder_un} we can see that while $\alunkest {j_{k-1}}=\alunkest {j_k}/\cbound$ does not satisfy \eqref{eq:ip_un} for any $q\in[2,4]$ and $\alunkest {j_k}$ satisfies \eqref{eq:alunkminun} for some $q$, therefore
    \begin{align}
        \alunkest {j_k} &\geq \lr 9\Lq \rr^{\frac 1 {q-1}}  \normM {\g(x^k)}{x^k} ^{*\frac {q-2}{q-1}} \qquad \exists q\in[2,4]\\
        \alunkest {j_k} &< \cbound \lr 9\Lq \rr^{\frac 1 {q-1}}  \normM {\g(x^k)}{x^k} ^{*\frac {q-2}{q-1}} \qquad \forall q \in[2,4]\\ 
        \alunkest {j_k} &< \cbound \inf_{q \in[2,4]} \lr 9\Lq \rr^{\frac 1 {q-1}}  \normM {\g(x^k)}{x^k} ^{*\frac {q-2}{q-1}},
    \end{align}
    hence estimate $\alunkest {j_k}$ is at most constant $\cbound $ times worse than any plausible parametrization of $(q,\Lq)$, and therefore, even the best plausible parametrization. In particular, for 
    \begin{align}
        q^*\eqdef \argmin_{q\in [2,4]} \frac {9\Lq D^q \lr 4 \cbound^2 (q-1) \rr ^{q-1}}{k^{q-1}} + \normMd {\g(x^0)} {x^{0}} D \exp\lr -\frac k4\rr,
    \end{align}
    we have that from \Cref{th:holder_un}
    \begin{align}
        f(x^k)-f(x^{k+1})
        \geq \frac 1{2\cbound} \lr \frac {1} {9M_{q^*}} \rr^{\frac 1 {q^*-1}} \frac{\normsMd {\g(x^{k+1})}{x^k}}{ \normM {\g(x^k)}{x^k}^{*\frac{q^*-2}{q^*-1}} }.
    \end{align}
    The rest of the proof is analogous to the proof of \Cref{th:global}.
\end{proof}

\ifneurips
    \include{styles/neurips_2024_checklist}
\fi

\end{document}